\newcommand*\patchAmsMathEnvironmentForLineno[1]{
  \expandafter\let\csname old#1\expandafter\endcsname\csname #1\endcsname
  \expandafter\let\csname oldend#1\expandafter\endcsname\csname end#1\endcsname
  \renewenvironment{#1}
     {\linenomath\csname old#1\endcsname}
     {\csname oldend#1\endcsname\endlinenomath}}
\newcommand*\patchBothAmsMathEnvironmentsForLineno[1]{
  \patchAmsMathEnvironmentForLineno{#1}
  \patchAmsMathEnvironmentForLineno{#1*}}
\theoremstyle{theorem}
\newtheorem{thm}{Theorem}[section]
\theoremstyle{definition}
\newtheorem{dfn}[thm]{Definition}
\theoremstyle{theorem}
\newtheorem{lem}[thm]{Lemma}
\newtheorem{prop}[thm]{Proposition}
\newtheorem{cor}[thm]{Corollary}
\numberwithin{equation}{section}
\title{Concordance invariant $\Upsilon$ for balanced spatial graphs using grid homology}
\author{Hajime Kubota}
\begin{document}
\maketitle
\begin{abstract}
The $\Upsilon$ invariant is a concordance invariant using knot Floer homology.
F\"{o}ldv\'{a}ri\cite{Foldvari-grid-upsilon} gives a combinatorial restructure of it using grid homology.
We extend the combinatorial $\Upsilon$ invariant for balanced spatial graphs.
Regarding links as spatial graphs, we give an upper and lower bound for the $\Upsilon$ invariant when two links are connected by a cobordism.
Also, we show that the combinatorial $\Upsilon$ invariant is a concordance invariant for knots.
\end{abstract}

\section{Introduction}
The $\tau$ invariant and the $\Upsilon$ invariant are defined by Ozsv\'{a}th, Szab\'{o} \cite{Knot-Floer-homology-and-the-four-ball-genus},\cite{Concordance-homomorphisms-from-knot-Floer-homology} using knot Floer homology.
The $\tau$ invariant and the $\Upsilon$ invariant give homomorphisms from the (smooth) knot concordance group $\mathcal{C}$ to $\mathbb{Z}$ and lower bounds for the slice genus and the unknotting number.
The $\tau$ invariant is known to prove the Milnor conjecture $g_4(T_{p,q})=\frac{1}{2}(p-1)(q-1)$ \cite{grid-tau-sarkar}.
The $\Upsilon$ invariant is a family of concordance invariants $\Upsilon_t$ defined for every $t\in[0,2]$ and the slope of the $\Upsilon$ invariant at $t=0$ equals the value of the $\tau$ invariant, so the $\Upsilon$ invariant is stronger than the $\tau$ invariant.
The $\Upsilon$ invariant shows that the subgroup of $\mathcal{C}$ generated by topologically slice knots has $\mathbb{Z}^\infty$ direct summand \cite{Concordance-homomorphisms-from-knot-Floer-homology}.

Grid homology is a combinatorial version of knot (link) Floer homology developed by Manolescu, Ozsv\'{a}th, Szab\'{o}, and Thurston \cite{on-combinatorial-link-Floer-homology}.
The original definition of knot Floer homology needs gauge theory and pseudo-holomorphic curves.
In contrast, grid homology only needs combinatorial procedures such as gazing at a planar figure called a grid diagram (figure \ref{fig:balanced}) and counting some rectangles on the grid diagram.

One of the main research directions of grid homology is to give purely combinatorial proofs for the known properties of knot Floer homology.
For example, Sarkar \cite{grid-tau-sarkar} gave a combinatorial reconstruction of the $\tau$ invariant, which we denote $\tau^{grid}$, using grid homology.
Sarkar also gave a purely combinatorial proof that the $\tau^{grid}$ is a concordance invariant.
As an application of it, a combinatorial proof of the Milnor conjecture was obtained.
F\"{o}ldv\'{a}ri \cite{Foldvari-grid-upsilon} defined the $\Upsilon^{grid}$ invariant using grid homology for $t\in[0,2]\cap\mathbb{Q}$ and evaluated the change of values on crossing changes.

A spatial graph is a smooth embedding $f\colon G\to S^3$, where $G$ is a one-dimensional CW-complex.
We assume that spatial graphs are always oriented.
A \textbf{transverse spatial graph} is a spatial graph such that for each vertex $v$, there is a small disk $D\subset S^3$ whose center is $v$ separating incoming edges and outgoing edges.
In this paper, we need one more condition: a transverse spatial graph is \textbf{balanced} if at each vertex, the number of incoming edges equals the number of outgoing edges.

Harvey, O'Donnol \cite{Heegaard_Floer_homology_of_spatial_graphs} extended grid homology to transverse spatial graphs. 
Then it is natural to explore generalizing of results of knot Floer Homology to transverse spatial graphs.
Vance \cite{grid-tau--Vance-spatial} defined the $\tau^{spatial}$ invariant for balanced spatial graphs as an extension of Sarkar's $\tau^{grid}$ invariant.
As an application of the $\tau^{spatial}$, Vance gave the bounds for the change of the $\tau^{spatial}$ invariant of two links connected by a link cobordism.

\begin{figure}[ht]
\centering
\includegraphics[scale=0.7]{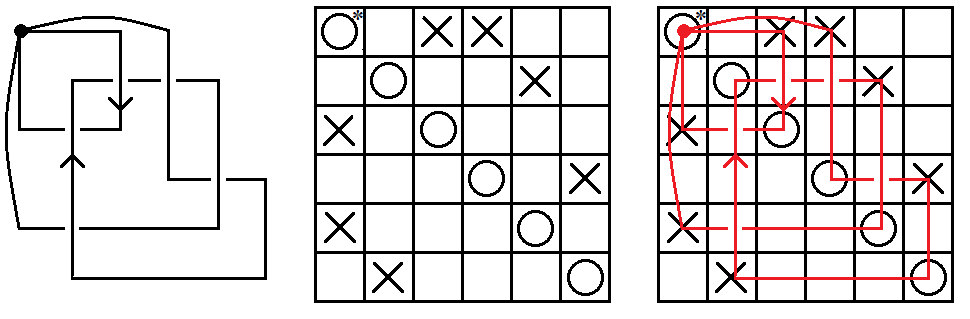}
\caption{an example of a balanced spatial graph and a graph grid diagram}
\label{fig:balanced}
\end{figure}

In this paper,  we first define $\Upsilon^{spatial}$ for balanced spatial graphs for every $t\in[0,2]$ as an extension of F\"{o}ldv\'{a}ri's knot invariant $\Upsilon^{grid}$.
Then we give a combinatorial proof that $\Upsilon^{spatial}$ (and also $\Upsilon^{grid}$) is a concordance invariant of knots.
Our $\Upsilon^{spatial}$ contains more information than F\"{o}ldv\'{a}ri's $\Upsilon^{grid}$ because $\Upsilon^{grid}$ is defined for $[0,2]\cap\mathbb{Q}$ but our $\Upsilon^{spatial}$ is defined for all $[0,2]$.

hhTo construct the $\Upsilon^{spatial}$ invariant, we use the t-modified chain complex $tCF^{-H}(g)$ from the grid chain complex in the same way as the way of Ozsv\'{a}th, Stipsicz, and Szab\'{o} \cite{Concordance-homomorphisms-from-knot-Floer-homology} and prove that the homology of the t-modified chain complex $tHF^{-H}(g)$ is (almost) independent of the choice of the graph grid diagram.
To define the $\Upsilon$ invariant using knot Floer homology, Ozsv\'{a}th, Stipsicz, and Szab\'{o} used a formal construction of the t-modified chain complex from filtered chain complex $C\mapsto C^t$, and to see the invariance, they showed that filtered chain homotopy equivalent complexes $C\simeq D$ are lifted to chain homotopy equivalent complexes $C^t\simeq D^t$.
Applying these ideas and considering the connection between the grid chain complexes and the t-modified chain complexes enable us to define $\Upsilon^{spatial}$ for all $[0,2]$ and to ensure the invariance of the $\Upsilon^{spatial}$.

\subsection{Main results.}
For $t\in[0,2]$, let $\mathcal{R}$ be a certain based ring (see Definition \ref{basedring}).
Let $W_t$ be a two-dimensional graded vector space $W_t\cong\mathbb{F}_{0}\oplus\mathbb{F}_{-1+t}$, where $\mathbb{F}=\mathbb{Z}/2\mathbb{Z}$ and their indices describe the grading (we call t-grading).
For a graded $\mathcal{R}$-module $X$, $X\llbracket a\rrbracket$ denotes a \textbf{shift} of $X$ such that $X\llbracket a\rrbracket_d=X_{d+a}$. Then,
\[
X\otimes W_t\cong X\oplus X\llbracket1-t\rrbracket.
\]

\begin{thm}
\label{main}
Let $g,g'$ be two graph grid diagrams for the same balanced spatial graph.
If their grid numbers are $n,m\ (n\geq m)$, which means that $g$ has $n\times n$ squares and $g'$ has $m\times m$ squares, then as graded $\mathcal{R}$ modules,
\[
tHF^{-H}(g)\cong tHF^{-H}(g')\otimes W_t^{\otimes (n-m)}.
\]
\end{thm}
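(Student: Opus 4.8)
The plan is to run the same strategy as for knots and links: reduce to elementary graph grid moves, determine the effect of each move on the t-modified homology, and reassemble. By the Reidemeister-type theorem for graph grid diagrams of Harvey--O'Donnol \cite{Heegaard_Floer_homology_of_spatial_graphs}, any two graph grid diagrams $g,g'$ for a fixed balanced spatial graph are connected by a finite sequence of commutations (of rows or columns, including those incident to a vertex) and of stabilizations and destabilizations; a commutation preserves the grid number, while a stabilization increases it by $1$ and a destabilization decreases it by $1$. Thus it suffices to prove two local statements: (a) if $g^{c}$ is obtained from $g$ by a commutation then $tHF^{-H}(g)\cong tHF^{-H}(g^{c})$; and (b) if $g^{+}$ is obtained from $g$ by a stabilization then $tHF^{-H}(g^{+})\cong tHF^{-H}(g)\otimes W_t$. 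Granting (a) and (b), one walks along a move sequence from $g'$ to $g$, maintaining by induction the isomorphism $tHF^{-H}(g_i)\otimes W_t^{\otimes d_i}\cong tHF^{-H}(g')\otimes W_t^{\otimes s_i}$, where $s_i,d_i$ count the stabilizations and destabilizations used so far; since $s_k-d_k=n-m$ at the end, this yields $tHF^{-H}(g)\otimes W_t^{\otimes D}\cong tHF^{-H}(g')\otimes W_t^{\otimes(D+n-m)}$ with $D=d_k$, and cancelling the common factor $W_t^{\otimes D}$ — legitimate for the finitely generated $\mathcal{R}$-modules occurring here, where $X\otimes W_t\cong Y\otimes W_t$ forces $X\cong Y$ — gives the theorem.

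For (a), Vance \cite{grid-tau--Vance-spatial} showed that a commutation induces a filtered quasi-isomorphism of the grid chain complexes $CF^{-H}(g)$ and $CF^{-H}(g^{c})$ carrying the symmetrized Alexander filtration; as these are finitely generated free filtered complexes over the grid ring, such a quasi-isomorphism is automatically a filtered chain homotopy equivalence. Now pass to the t-modified complexes via $C\mapsto C^{t}$: by the lifting principle of Ozsv\'ath--Stipsicz--Szab\'o \cite{Concordance-homomorphisms-from-knot-Floer-homology}, a filtered chain homotopy equivalence of filtered complexes is taken to a chain homotopy equivalence of the associated t-modified complexes, so $tCF^{-H}(g)\simeq tCF^{-H}(g^{c})$ and hence $tHF^{-H}(g)\cong tHF^{-H}(g^{c})$. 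One must only check that the symmetrized Alexander filtration of a balanced spatial graph meets the mild hypotheses under which the construction $C\mapsto C^{t}$ and the lifting principle are set up, which is routine.

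For (b), the crux, the model is F\"oldv\'ari's filtered stabilization computation for knot grids \cite{Foldvari-grid-upsilon} (the grid form of the destabilization lemma): after an explicit change of basis the stabilized complex $CF^{-H}(g^{+})$ acquires an acyclic direct summand, and the complement is filtered chain homotopy equivalent to $CF^{-H}(g)\otimes V$, where $V$ is the two-step filtered complex with zero differential whose two generators have $(\text{Maslov},\text{Alexander})$ bidegrees $(0,0)$ and $(-1,-1)$. Applying $C\mapsto C^{t}$, which converts this tensor product into the corresponding tensor product of t-modified complexes, and noting that $V^{t}\cong W_t$ — the two generators acquiring t-gradings $0$ and $-1+t$ — one gets $tCF^{-H}(g^{+})\simeq tCF^{-H}(g)\otimes W_t$, and (b) follows on homology. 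The genuinely new, and main, difficulty is to run this local computation in the spatial-graph setting: the Alexander grading of a balanced spatial graph is only relatively defined, so one must verify that the generators and rectangles created by a stabilization shift the \emph{symmetrized} Alexander filtration of Vance by exactly the amounts producing $V$ (hence $W_t$), and that the combinatorics near a vertex, where row and column stabilizations can occur, obstructs neither the splitting off of the acyclic summand nor the bidegree bookkeeping. Step (a) is comparatively routine given \cite{grid-tau--Vance-spatial} and \cite{Concordance-homomorphisms-from-knot-Floer-homology}; step (b) is where the real work lies.
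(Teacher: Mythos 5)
Your high-level strategy matches the paper's, but your step (b) has two problems, and they sit exactly where the spatial-graph setting differs from the knot case. There is no filtered direct-sum decomposition of the stabilized complex: the correct filtered statement (Proposition~\ref{chainho3}) is $CF^{-H}(g')\simeq\mathrm{Cone}\bigl(U_1-U_2\colon CF^{-H}(g)[U_1]\to CF^{-H}(g)[U_1]\bigr)$, and since $U_1-U_2$ is injective on a free module this cone is quasi-isomorphic to $CF^{-H}(g)$ over $\mathbb{F}[U_1,\dots,U_n]$, not to $CF^{-H}(g)\otimes V$ plus an acyclic summand. The direct sum you want only appears after quotienting by $U_1=\cdots=U_n$; and at that point you have left the Alexander-filtered category, because for a spatial graph the variables $U_i$ drop the Alexander grading by $-m_i$, which varies with $i$, so $CF^{-H}_U(g)$ is \emph{not} a filtered complex. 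Hence the Ozsv\'ath--Stipsicz--Szab\'o lifting (Proposition~\ref{ct=ct}) cannot be applied to $CF^{-H}_U$ off the shelf, and the check you dismiss as ``routine'' is exactly the content-bearing step: the paper observes that Definition~\ref{t-mod2} nevertheless makes sense on $CF^{-H}_U(g)$ because each generator retains a well-defined Alexander grading, and then verifies in Proposition~\ref{U=t} that the resulting formal complex agrees with the combinatorially defined $tCF^{-H}(g)$. In the knot case every $U_i$ drops $A$ by $1$, so none of this is visible; here it is the main new wrinkle and must be handled explicitly, not waved through.

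Two smaller points. You omit cyclic permutation from the move list (harmless, it preserves grid number and gives an isomorphism by Proposition~\ref{chainho1}). And the cancellation $X\otimes W_t\cong Y\otimes W_t\Rightarrow X\cong Y$ that you invoke to close the induction is genuinely needed --- the paper's walk along a move sequence relies on it implicitly too, since stabilizations and destabilizations may interleave --- but your assertion that it is ``legitimate'' deserves an argument or citation: $\mathcal{R}$ is a non-Noetherian valuation ring, so Krull--Schmidt is not automatic, and one must use the concrete structure of the finitely generated $\mathcal{R}$-modules that actually arise.
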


\begin{dfn}
Let $g$ be a graph grid diagram for balanced spatial graph $f$.
For $t\in[0,1]$, let
\[
\Upsilon_g(t):=\mathrm{max}\{\mathrm{gr}_t(x)|x\in tHF^{-H}(g),x\mathrm{\ is\ homogeneous,non-torsion}\},
\]
and for $t\in[1,2]$, let $\Upsilon_g(t)=\Upsilon_g(2-t)$, where "non-torsion" means that it is non-torsion as an element of $\mathcal{R}$-module.
\end{dfn}

\begin{thm}
\label{main2}
If $g,g'$ are two graph grid diagrams for $f$,then $\Upsilon_g(t)=\Upsilon_{g'}(t)$.
\end{thm}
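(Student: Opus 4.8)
The plan is to deduce Theorem~\ref{main2} directly from the structural result Theorem~\ref{main}, after which only an elementary bookkeeping argument with graded $\mathcal{R}$-modules remains. First, by the definition of $\Upsilon_g$ on $[1,2]$ it suffices to treat $t\in[0,1]$: once $\Upsilon_g(t)=\Upsilon_{g'}(t)$ is known there, for $t\in[1,2]$ we get $\Upsilon_g(t)=\Upsilon_g(2-t)=\Upsilon_{g'}(2-t)=\Upsilon_{g'}(t)$. So fix $t\in[0,1]$ and, since the statement is symmetric in $g$ and $g'$, assume the grid number of $g$ is $n$ and that of $g'$ is $m$ with $n\geq m$.

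Next I record two general facts about the quantity
\[
M(X):=\max\{\mathrm{gr}_t(x)\mid x\in X\text{ homogeneous, non-torsion}\}
\]
attached to a graded $\mathcal{R}$-module $X$ (the maxima below are attained, which is part of what makes the definition of $\Upsilon_g$ sensible, so everything that follows is well defined). (a) Since $\mathcal{R}$ is an integral domain (Definition~\ref{basedring}), an element of a direct sum $A\oplus B$ of graded $\mathcal{R}$-modules is non-torsion if and only if at least one of its two components is non-torsion; hence $M(A\oplus B)=\max\{M(A),M(B)\}$. (b) Since $X\llbracket a\rrbracket_d=X_{d+a}$, a non-torsion homogeneous element of $X$ of $t$-grading $\delta$ corresponds to a non-torsion homogeneous element of $X\llbracket a\rrbracket$ of $t$-grading $\delta-a$; hence $M(X\llbracket a\rrbracket)=M(X)-a$.

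Now I apply Theorem~\ref{main}, which gives an isomorphism $tHF^{-H}(g)\cong tHF^{-H}(g')\otimes W_t^{\otimes(n-m)}$ of graded $\mathcal{R}$-modules. Iterating the identity $X\otimes W_t\cong X\oplus X\llbracket 1-t\rrbracket$ from the introduction, together with (a) and (b), yields
\[
M\bigl(X\otimes W_t\bigr)=\max\{M(X),\,M(X)-(1-t)\}=M(X),
\]
the last equality because $1-t\geq 0$ for $t\in[0,1]$. By induction on $n-m$ this gives $M\bigl(tHF^{-H}(g)\bigr)=M\bigl(tHF^{-H}(g')\bigr)$, that is, $\Upsilon_g(t)=\Upsilon_{g'}(t)$, completing the argument for $t\in[0,1]$ and hence for all $t$.

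Since Theorem~\ref{main} is already in hand, there is no genuine obstacle here; the delicate points are purely formal, namely that ``non-torsion'' behaves well under direct sums (which is exactly why one needs $\mathcal{R}$ to be a domain in fact (a)) and that the shifts appearing in $W_t^{\otimes(n-m)}$ all go in the non-positive direction when $t\in[0,1]$, so the additional tensor factors cannot raise the maximum. The boundary value $t=1$, where $W_1\cong\mathbb{F}_0\oplus\mathbb{F}_0$ and the shift is trivial, is a harmless special case of the same computation.
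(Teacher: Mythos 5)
Your proposal is correct and follows exactly the same route as the paper: apply Theorem~\ref{main} and observe that for $t\in[0,1]$ the grading shift $\llbracket 1-t\rrbracket$ coming from each $W_t$ factor is non-positive, so tensoring with $W_t^{\otimes(n-m)}$ cannot change the maximal $t$-grading of a homogeneous non-torsion element. The paper states this in a single sentence, and you have simply supplied the straightforward bookkeeping (the reduction to $t\in[0,1]$, the behavior of non-torsion elements under direct sums using that $\mathcal{R}$ is a domain, and the shift convention) that the paper leaves implicit.
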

In other words, $\Upsilon_g(t)$ is an invariant for balanced spatial graphs. 
We will denote by $\Upsilon_f(t)$.

Let $L_1, L_2$ be two oriented links.
A \textbf{genus} $\mathbf{g}$ \textbf{link cobordism} from $L_1$ to $L_2$ is an oriented genus $g$ surface $F$ smoothly embedded in $S^3\times[0,1]$, such that $F\cap (S^3\times\{0\})=L_1$ and $F\cap (S^3\times\{1\})=-L_2$.
Two $l$-component links are \textbf{concordant} if they are connected by a cobordism consisting of $l$ disjoint annuli.

\begin{thm}
\label{main3}
Let $L_1,L_2$ be two links of $l_1,l_2$-components respectively.
If there is a genus $g$ link cobordism from $L_1$ to $L_2$, then
\[
\Upsilon_{L_1}(t)-tg-t(l_1-1)-(l_1-l_2)\leq\Upsilon_{L_2}(t)\leq\Upsilon_{L_1}(t)+tg+t(l_2-1)+(l_2-l_1).
\]
Especially, if $L_1,L_2$ are two knots $K_1,K_2$, then
\[
|\Upsilon_{K_1}(t)-\Upsilon_{K_2}(t)|\leq tg,
\]
i.e. $\Upsilon$ is a concordance invariant for knots.
\end{thm}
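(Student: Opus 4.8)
The plan is to follow the grid-homological template for cobordism inequalities used by Sarkar \cite{grid-tau-sarkar} for $\tau^{grid}$ and by Vance \cite{grid-tau--Vance-spatial} for $\tau^{spatial}$, lifted to the $t$-modified setting. Perturbing the height function $S^3\times[0,1]\to[0,1]$ to be Morse on $F$ — and discarding closed components of $F$, which only decrease $g$, and recording disk components as births or deaths — presents $F$, up to ambient isotopy of the levels, as a finite composition of elementary cobordisms: $b$ births (a split unknot appears), $d$ deaths (a split unknot disappears), and $s$ oriented saddle (band) moves, with $\chi(F)=b+d-s$; a birth or death changes the number of components by $\pm1$, and a saddle by $\pm1$ (a \emph{merge} or a \emph{split}). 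Since $\Upsilon$ is an ambient isotopy invariant by Theorem \ref{main2}, it is enough to estimate the change of $\Upsilon_t$ across each elementary move and then sum, converting the move counts into $g,l_1,l_2$ via Euler characteristic.

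For a single saddle $L\leadsto L'$ the key point, due to Vance, is that an oriented band move factors through a balanced spatial graph: collapsing the band to a point gives a transverse spatial graph $G$ with one extra $4$-valent ($2$-in, $2$-out) vertex whose two oriented resolutions are $L$ and $L'$. Picking a graph grid diagram $g_G$ for $G$ and resolving the vertex in the two ways yields graph grid diagrams $g,g'$ for $L,L'$ of the same grid number, and the tautological inclusion and projection between the associated sets of generators descend to $\mathcal R$-module chain maps relating $tCF^{-H}(g)$, $tCF^{-H}(g_G)$ and $tCF^{-H}(g')$. I would then prove that on homology these maps carry non-torsion classes to non-torsion classes and shift the grading $\mathrm{gr}_t=M-tA$ by a bounded amount, by computing how the Maslov grading and the symmetrized Alexander grading of a generator change under vertex resolution and feeding this into $\mathrm{gr}_t$. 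This yields, for each saddle, inequalities $\Upsilon_L(t)-c^-(t)\le\Upsilon_{L'}(t)\le\Upsilon_L(t)+c^+(t)$ with $c^-,c^+$ affine in $t$, the constants differing for a merge and a split and symmetric under reversing the cobordism.

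Births and deaths are handled by a split-union (Künneth) computation: adjoining a $2\times2$ block in a fresh row-and-column region to a graph grid diagram represents the disjoint union with a split unknot, raises the grid number by $2$, and at the level of $t$-modified homology tensors $tHF^{-H}$ over $\mathcal R$ with that of the $2\times2$ unknot diagram up to an explicit grading shift; together with $\Upsilon_U(t)=0$ this fixes the $\Upsilon_t$-shift of a birth (and the opposite for a death). Assembling all per-move estimates and substituting $\chi(F)=b+d-s$ together with the identity $\chi(F)=2-2g-l_1-l_2$ for a connected genus-$g$ surface with $l_1+l_2$ boundary circles (and its additive analogue for disconnected $F$) — which lets one group the saddles as the $l_1-1$ merges driving the component count down to one, the $g$ merge/split pairs accounting for the genus, and the $l_2-1$ splits driving it back up, with the per-move constants arranged so that this accounting produces exactly the stated coefficients — telescopes to
\[
\Upsilon_{L_1}(t)-tg-t(l_1-1)-(l_1-l_2)\le\Upsilon_{L_2}(t)\le\Upsilon_{L_1}(t)+tg+t(l_2-1)+(l_2-l_1).
\]
For knots, $l_1=l_2=1$, the component-count terms vanish, and this becomes $|\Upsilon_{K_1}(t)-\Upsilon_{K_2}(t)|\le tg$; taking $g=0$ shows $\Upsilon$ is a knot concordance invariant.

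The hard part will be the saddle step. Two things must be checked there with care: first, that the vertex-resolution maps are genuinely defined on the \emph{$t$-modified} complexes and compatible with the symmetrized Alexander filtration — this is where the balanced hypothesis and Vance's filtration stabilization are needed, and where one must verify that the $\mathcal R$-linear $t$-modification $C\mapsto C^t$ intertwines with these maps; and second, that the grading shift and the effect on non-torsion classes can be controlled simultaneously. Unlike the $\tau$ case, where one tracks a single integer, $\mathrm{gr}_t$ blends Maslov and Alexander data, so identifying the image of a top-$\mathrm{gr}_t$ non-torsion generator and bounding the resulting loss in $\mathrm{gr}_t$ requires understanding the resolution map itself, not merely a grading count.
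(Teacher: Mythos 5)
Your high-level plan (decompose the cobordism into births, saddles, deaths; bound the shift of $\mathrm{gr}_t$ across each elementary move; sum, and convert via Euler characteristic) is indeed the paper's template, but the step you yourself flag as ``the hard part'' — the saddle — is proposed by a genuinely different and incompletely specified route, and this is exactly where the coefficients in the statement are decided.

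The paper does \emph{not} factor each saddle through a $4$-valent spatial graph. It works throughout with \emph{tight} grid diagrams for links and uses Sarkar's link-grid move theorem (quoted as Proposition \ref{link-grid move}): a genus-$g$ cobordism with $b$ births, $s$ saddles and $d$ deaths can be realized by exactly $b$ births, then $s-d+l_1-l_2$ \emph{$X$-saddles}, then $d-l_1+l_2$ \emph{$O$-saddles}, then $d$ deaths, in that order. The distinction between $X$-saddles (move two $X$-markings in a $2\times2$ block) and $O$-saddles (turn an $O^*$ and an $O$ into two $O^*$'s) is essential: the explicit chain maps $\sigma,\mu$ in Propositions \ref{x-saddle} and \ref{o-saddle} shift $\mathrm{gr}_t$ by $-\tfrac12 t$ for an $X$-saddle but by $-1+\tfrac12 t$ for an $O$-saddle, and Lemma \ref{torsion} (the $\beta\circ\alpha=v^s$ trick) is what guarantees non-torsion classes survive. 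Plugging Sarkar's exact counts into these two different per-move shifts (together with $\pm\tfrac12 t$ for births/deaths and $1-\tfrac12 t$ from the extra grading shift of $\mathcal D^{iL}$) and using $g=\tfrac12(s-b-d)+1-\tfrac{l_1+l_2}{2}$ is precisely what produces the asymmetric terms $t(l_1-1)$, $t(l_2-1)$, $(l_1-l_2)$. Your single ``vertex-resolution'' saddle estimate, with one pair of constants $c^\pm(t)$ for merges and one for splits, does not obviously reproduce this two-flavor bookkeeping: you would still need to explain what happens to tightness (i.e.\ to $O^*$-markings) when a split occurs, which is exactly what $O$-saddles are for.

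Two further points you omit. First, the paper switches to the alternative Alexander normalization $A'$ (Definition after Proposition \ref{o-saddle}) and the corresponding $\Upsilon'_g(t)=\Upsilon_g(t)+\tfrac{l-1}{2}t$, because $A^H$ depends on the number of link components, which changes along the cobordism; the per-move shifts are computed for $\Upsilon'$ and only converted to $\Upsilon$ at the end via (\ref{up'up}). Without this, your per-move constants would be ill-defined as the number of components varies. Second, your birth/death step is described as ``Künneth,'' but the paper instead reuses the destabilization-type maps $\mathcal D^{iL},\mathcal D^{iR},\mathcal S^{oL},\mathcal S^{oR}$ from Subsection \ref{sub:sta} (Proposition \ref{prop:birth}), giving $tHF^{-H}(g')\cong tHF^{-H}(g)\oplus tHF^{-H}(g)\llbracket1\rrbracket$ directly; the grading-shift bookkeeping in the final sum comes from reading $\mathcal D^{iL}$ as a map into $tHF^-(g)$ rather than into the shifted summand, a subtlety your Künneth description would have to reproduce.
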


\begin{cor}
\label{main4}
For $t\in[0,1]$,
\[
|\Upsilon_K(t)|\leq t\cdot g_s(K),
\]
where $g_s(K)$ is the slice genus of $K$.
\end{cor}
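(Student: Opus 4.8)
The plan is to deduce the estimate from the knot case of Theorem~\ref{main3} by comparing $K$ with the unknot $U$. By definition of the slice genus there is a smoothly and properly embedded oriented surface $\Sigma\subset B^4$ of genus $g=g_s(K)$ with $\partial\Sigma=K\subset\partial B^4=S^3$. I would choose an interior point $p\in\Sigma$ and a small ball neighbourhood $B$ of $p$ in $B^4$ for which $(B,B\cap\Sigma)$ is the standard unknotted $(D^4,D^2)$ pair. Deleting $\mathrm{int}(B)$ identifies $B^4\setminus\mathrm{int}(B)$ with $S^3\times[0,1]$ and carries $\Sigma$ to an oriented genus $g$ surface $F\subset S^3\times[0,1]$ with $F\cap(S^3\times\{0\})=K$ and $F\cap(S^3\times\{1\})=-U$, where $U$ is the (unknotted) boundary of the removed disk $B\cap\Sigma$. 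Thus $F$ is a genus $g$ link cobordism from $K$ to $U$ in the sense of the paper.

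Applying the knot part of Theorem~\ref{main3} to $F$ gives
\[
|\Upsilon_K(t)-\Upsilon_U(t)|\le t\,g=t\,g_s(K)
\]
for every $t$, so the corollary reduces to the assertion $\Upsilon_U(t)=0$ for $t\in[0,1]$. To see this, take any graph grid diagram of the unknot; a direct computation of its $t$-modified complex shows that $tHF^{-H}$ of this diagram is, up to tensoring with copies of $W_t$, a free $\mathcal{R}$-module of rank one whose generator lies in $t$-grading $0$. Because $W_t\cong\mathbb{F}_0\oplus\mathbb{F}_{-1+t}$ and the shift $\llbracket 1-t\rrbracket$ can only lower $t$-gradings when $t\in[0,1]$, no homogeneous non-torsion class has $t$-grading larger than $0$; hence $\Upsilon_U(t)=0$, and the displayed inequality becomes $|\Upsilon_K(t)|\le t\,g_s(K)$.

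The only substantive point is the last one, the computation $\Upsilon_U(t)=0$; the construction of $F$ is the standard trick of puncturing a slice surface, and everything else is formal bookkeeping. That computation is the combinatorial counterpart of the fact $\Upsilon_{\mathrm{unknot}}\equiv 0$ for the Ozsv\'ath--Stipsicz--Szab\'o invariant \cite{Concordance-homomorphisms-from-knot-Floer-homology} and is already implicit in F\"oldv\'ari's treatment of $\Upsilon^{grid}$ \cite{Foldvari-grid-upsilon}, so I would either cite it or reproduce the short rectangle count on a minimal ($2\times 2$) grid diagram of the unknot, being mildly careful that the $t$-grading normalisation used to define $\Upsilon_g$ is the one placing the unknot in $t$-grading $0$. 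The orientation with which the boundary of the removed disk appears (as $-U$ rather than $U$) is immaterial, since $\Upsilon_{-U}(t)=\Upsilon_U(t)=0$.
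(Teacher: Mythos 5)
Your argument is correct and is exactly the standard deduction that the paper leaves implicit (the paper states the corollary without proof). You puncture the slice surface to get a genus-$g_s(K)$ cobordism from $K$ to the unknot $U$, apply the knot case of Theorem~\ref{main3}, and then observe $\Upsilon_U(t)=0$; the only point that deserves a line in the write-up is the last one, and your sketch of it is right: on any $n\times n$ grid diagram of the unknot, $tHF^{-H}\cong\mathcal{R}\otimes W_t^{\otimes(n-1)}$ with the free rank-one summand sitting in $t$-grading $0$, and since the shift $\llbracket1-t\rrbracket$ in $W_t$ is non-positive for $t\in[0,1]$ the maximum $t$-grading of a homogeneous non-torsion class is $0$, so $\Upsilon_U(t)=0$. (This parallels the computation already carried out in the proof of Proposition~\ref{prop:0} at $t=0$, and your remark about orientation of the removed disk's boundary being immaterial is also correct.)
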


\subsection{Some properties of $\Upsilon$}
Grid homology is a combinatorial version of knot Floer homology, so it is expected that our $\Upsilon$ satisfies the same properties as the original one of Ozsv\'{a}th, Szab\'{o} \cite{Knot-Floer-homology-and-the-four-ball-genus},\cite{Concordance-homomorphisms-from-knot-Floer-homology}.
Some properties of the original $\Upsilon$ invariant are proved using algebraic techniques, and we show some of them in the same way.

\begin{prop}
\label{prop:0}
$\Upsilon_f(0)=0$.
\end{prop}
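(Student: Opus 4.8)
The plan is to evaluate everything at the single point $t=0$ and to read off the maximal $t$-grading directly. By Theorem~\ref{main2} it suffices to prove $\Upsilon_g(0)=0$ for one (hence any) graph grid diagram $g$ of $f$; alternatively one may invoke Theorem~\ref{main} together with the observation that the correction space at $t=0$ is $W_0\cong\mathbb F_0\oplus\mathbb F_{-1}$, whose top $t$-grading is $0$, so stabilization leaves $\Upsilon_g(0)$ unchanged anyway.

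The first step is to identify $tCF^{-H}(g)$ at $t=0$. Unwinding the definition of the $t$-grading, the symmetrized Alexander contribution enters with weight $t$, so $\mathrm{gr}_0=M$, the Maslov grading. Similarly, in the $t$-modified differential the exponent of $v$ attached to an empty rectangle is an affine function of $t$ whose value at $t=0$ records exactly the $U$-powers appearing in the ordinary grid differential of $CF^-(g)$; hence, after sending each $U_i$ to the appropriate power of $v$ fixed in Definition~\ref{basedring}, one gets a canonical isomorphism of graded chain complexes over $\mathcal R$
\[
tCF^{-H}(g)\big|_{t=0}\;\cong\;CF^-(g)\otimes_{\mathbb F[U]}\mathcal R,
\]
where $CF^-(g)$ is taken with its Maslov grading. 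Passing to homology, $tHF^{-H}(g)\big|_{t=0}\cong HF^-(g)\otimes_{\mathbb F[U]}\mathcal R$.

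The second step is to feed in the structure of $HF^-$ of a balanced spatial graph (Harvey--O'Donnol, Vance): its non-torsion part is a free $\mathbb F[U]$-module whose generators lie in Maslov gradings $\le 0$, with $0$ attained. Extension of scalars to $\mathcal R$ preserves both the free/torsion splitting and the gradings, and $v$ has negative $\mathrm{gr}_0$-degree, so every non-torsion homogeneous element of $HF^-(g)\otimes_{\mathbb F[U]}\mathcal R$ has $\mathrm{gr}_0\le 0$, while the degree-$0$ generator realizes the value $0$. Therefore $\Upsilon_g(0)=\max\{\mathrm{gr}_0(x)\mid x\ \text{homogeneous, non-torsion}\}=0$, and by Theorem~\ref{main2} this equals $\Upsilon_f(0)$.

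The main obstacle here is bookkeeping rather than conceptual: one must pin down the precise $t=0$ specialization of $\mathcal R$ and of the rectangle weights so that the displayed identification is literal, and one must make sure that the statement ``the non-torsion part of $HF^-$ is free with generators in Maslov gradings $\le 0$, value $0$ attained'' is on record for arbitrary balanced spatial graphs and not only for knots and links. In view of the earlier sections this should be routine, but it is the point that deserves care.
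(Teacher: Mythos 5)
Your approach is genuinely different from the paper's, and it has two real gaps.

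First, the displayed identification ``passing to homology, $tHF^{-H}(g)|_{t=0}\cong HF^-(g)\otimes_{\mathbb F[U]}\mathcal R$'' does not hold. At $t=0$ the $t$-modified complex is, via Proposition~\ref{U=t} and extension of scalars, identified with $CF^{-H}_U(g)\otimes_{\mathbb F[U]}\mathcal R$ where $CF^{-H}_U(g)=CF^{-H}(g)/(U_1=\cdots=U_n)$. The base change $-\otimes_{\mathbb F[U]}\mathcal R$ is harmless (it is flat), but the quotient $U_1=\cdots=U_n$ does \emph{not} commute with taking homology: the homology of $CF^{-H}_U(g)$ is not $HF^-(g)$ after collapsing the $U_i$'s, it picks up the extra $W_0^{\otimes(n-1)}$ factors. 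This is exactly why the paper's proof invokes the universal coefficient theorem together with the known computation of the homology of the collapsed grid complex (grid book Lemma~14.1.11) rather than just ``passing to homology.'' In the particular case at hand the extra $W_0$ factors happen to have top $\mathrm{gr}_0$-degree $0$, so the final answer is unchanged, but the isomorphism you assert is wrong as a step.

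Second, you rely on the structural claim that for an arbitrary balanced spatial graph the non-torsion part of $HF^-(g)$ is a free $\mathbb F[U]$-module with generators in Maslov degrees $\le 0$ and $0$ attained. You flag this yourself, and it is indeed the crux: this is not a result the paper has at its disposal for general balanced spatial graphs. The paper's proof circumvents the need for it entirely by a different observation: at $t=0$ the $t$-modified differential \eqref{differential_combinatorial} and the $t$-grading are blind both to the $X$-markings and to the $O$ versus $O^*$ distinction. One may therefore replace all the $X$-markings so that the diagram represents the unknot, obtaining a grid diagram $g'$ with $tCF^{-H}(g)\cong tCF^{-H}(g')$ on the nose, and compute the homology of $g'$ explicitly. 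If you want to salvage your route, you would need to either establish that structural statement about $HF^-$ for balanced spatial graphs, or import the paper's $X$-marking replacement trick, at which point the argument collapses to the paper's.
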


\begin{prop}
\label{prop:1}
If two links $L_+$ and $L_-$ differ in a crossing change, then for $t\in[0,1]$,
\[
\Upsilon_{L_+}(t)\leq\Upsilon_{L_-}(t)\leq\Upsilon_{L_+}(t)+(2-t)
\]
\end{prop}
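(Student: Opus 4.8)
The plan is to follow F\"{o}ldv\'{a}ri's analysis of crossing changes for $\Upsilon^{grid}$ \cite{Foldvari-grid-upsilon}, carried out on the t-modified symmetrized complex $tCF^{-H}$ over the based ring $\mathcal{R}$ so that it applies to all $t\in[0,2]$ and to links. First I would realize the crossing change combinatorially: pick graph grid diagrams $g_+$ and $g_-$ of the same grid number representing $L_+$ and $L_-$ which coincide outside a $2\times2$ block of squares and differ there by the local move that implements a crossing change (as in \cite{grid-tau-sarkar},\cite{Foldvari-grid-upsilon}), with the labelling fixed so that $g_+$ carries the positive crossing. By Theorem \ref{main2} it suffices to prove both inequalities with $\Upsilon_{g_\pm}(t)$ in place of $\Upsilon_{L_\pm}(t)$, and since $t\in[0,1]$ we may use $\Upsilon_{g}(t)=\max\{\mathrm{gr}_t(x)\mid x\in tHF^{-H}(g)\text{ homogeneous, non-torsion}\}$ throughout.

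The heart of the argument is to produce two $\mathcal{R}$-linear chain maps
\[
F\colon tCF^{-H}(g_+)\longrightarrow tCF^{-H}(g_-),\qquad G\colon tCF^{-H}(g_-)\longrightarrow tCF^{-H}(g_+),
\]
homogeneous for $\mathrm{gr}_t$ of degrees $0$ and $-(2-t)$ respectively, with $G\circ F\simeq U$ and $F\circ G\simeq U$, where $U$ denotes the action on $tCF^{-H}$ that lowers $\mathrm{gr}_t$ by $2-t$. As in the proof of commutation invariance of grid homology, counting empty pentagons supported near the $2\times2$ block defines $\mathbb{F}$-linear maps between $CF^{-H}(g_+)$ and $CF^{-H}(g_-)$; one checks these are compatible with the symmetrized Alexander filtrations, one of them filtration-preserving and the other lowering the filtration level by the amount read off from the block (this asymmetry being exactly the effect of the crossing sign), and then lifts them through the formal construction $C\mapsto C^{t}$ used to build $tCF^{-H}$ and to prove Theorem \ref{main2}, obtaining $F$ and $G$ with the stated $\mathrm{gr}_t$-degrees. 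Reinterpreting the hexagon homotopies of the commutation argument in this situation shows $G\circ F$ and $F\circ G$ are chain homotopic to the $U$-action on the respective complex.

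Granting this, the two inequalities are a short computation with the $\mathcal{R}$-module structure. Since $\mathcal{R}$ is a based ring in which $U$ is not a zero divisor, a homogeneous class is non-torsion if and only if its $U$-multiple is, and $G_{*}$, being $\mathcal{R}$-linear, carries torsion classes to torsion classes; hence $(G\circ F)_{*}=U$ forces $F_{*}$ to send every homogeneous non-torsion class of $tHF^{-H}(g_+)$ to a homogeneous non-torsion class of $tHF^{-H}(g_-)$, and as $F$ has $\mathrm{gr}_t$-degree $0$ this yields $\Upsilon_{g_+}(t)\le\Upsilon_{g_-}(t)$. In the same way $(F\circ G)_{*}=U$ forces $G_{*}$ to send a homogeneous non-torsion class $y$ of $tHF^{-H}(g_-)$ to a homogeneous non-torsion class of $tHF^{-H}(g_+)$ with $\mathrm{gr}_t(G_{*}y)=\mathrm{gr}_t(y)-(2-t)$, so $\mathrm{gr}_t(y)\le\Upsilon_{g_+}(t)+(2-t)$; taking the maximum over such $y$ gives $\Upsilon_{g_-}(t)\le\Upsilon_{g_+}(t)+(2-t)$.

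The step I expect to be the real obstacle is the construction of $F$ and $G$: verifying that the pentagon and hexagon maps of the crossing-change move are filtered with exactly the shifts claimed and that they descend to $tCF^{-H}$ as $\mathrm{gr}_t$-homogeneous $\mathcal{R}$-linear chain maps and homotopies realizing $G\circ F\simeq F\circ G\simeq U$; once this is set up, the rest is standard grid manipulation together with the module bookkeeping above. (For knots one could instead note that a crossing change is realized by a genus-$1$ cobordism and quote Theorem \ref{main3} to get $|\Upsilon_{K_+}(t)-\Upsilon_{K_-}(t)|\le t$; but this is weaker in the link case and does not detect the one-sided inequality $\Upsilon_{L_+}(t)\le\Upsilon_{L_-}(t)$, so the argument above is needed for the full Proposition.)
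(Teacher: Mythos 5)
Your proposal is correct and follows essentially the same route as the paper: the paper proves Proposition~\ref{prop:1} by first recording (Proposition~\ref{cc}) that the filtered crossing-change pentagon maps of \cite[Proposition 6.1.1]{grid-book}, pushed through the formal lift of Proposition~\ref{ct=ct}, yield $\mathcal{R}$-linear maps $C^t_-$ (degree $0$) and $C^t_+$ (degree $-(2-t)$) on $tHF^{-H}$ with $C^t_\pm\circ C^t_\mp=v^{2-t}$, and then deduces the two inequalities exactly as in F\"{o}ldv\'{a}ri's Theorem 5.10 via the torsion argument of Lemma~\ref{torsion}. Your maps $F,G$ and their claimed degrees and compositions are precisely the $C^t_-,C^t_+$ of Proposition~\ref{cc}, and your concluding module-theoretic computation is the same as the paper's (the only cosmetic slip is describing the crossing-change realization as a local $2\times2$-block move rather than the usual cross-commutation of two adjacent columns, which is what the grid-book maps are built from).
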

F\"{o}ldv\'{a}ri proved this property when two links are knots and $t$ is rational, by constructing concrete maps between the t-modified chain complexes (see \cite{Foldvari-grid-upsilon}). 
In this paper, we show that these maps are induced from some maps on grid chain complexes.
Note that this Proposition is weaker than the original property in \cite[Proposition 1.10]{Concordance-homomorphisms-from-knot-Floer-homology}.

For a balanced spatial graph $f$, let $\mathcal{U}(f)$ be a balanced spatial graph which is the disjoint union of a trivial knot and $f$, regarding the trivial knot as a graph consisting of one vertex and one edge.
\begin{prop}
\label{prop:2}
For a balanced spatial graph $f$,
\[
\Upsilon_{\mathcal{U}(f)}(t)=\Upsilon_f(t).
\]
\end{prop}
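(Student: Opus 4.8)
The plan is to realize a graph grid diagram for $\mathcal{U}(f)$ as a disjoint union $g'=g\sqcup u$, where $g$ is a graph grid diagram for $f$ of grid number $n$ and $u$ is a fixed graph grid diagram of some small grid number $k$ representing the trivial knot (viewed as one vertex with one loop edge). Concretely I would place $u$ in an empty $k\times k$ corner block of an $(n+k)\times(n+k)$ grid and $g$ in the complementary $n\times n$ block; the result is a graph grid diagram $g'$ for $\mathcal{U}(f)$ of grid number $n+k$. By Theorem~\ref{main2} we have $\Upsilon_{\mathcal{U}(f)}(t)=\Upsilon_{g'}(t)$ and $\Upsilon_f(t)=\Upsilon_g(t)$, so it suffices to compute $tHF^{-H}(g')$ in terms of $tHF^{-H}(g)$.

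The second step is a K\"unneth-type splitting along the block decomposition. A generator of $g'$ is a pair consisting of a generator of $g$ and a generator of $u$, and every empty rectangle on $g'$ is supported entirely in the $g$-block or entirely in the $u$-block --- no rectangle joins the two. After matching the $U$-variables of the two blocks and checking that the symmetrized Alexander filtration of $g'$ is the sum of those of $g$ and $u$, this yields a filtered identification $CF^{-H}(g')\cong CF^{-H}(g)\otimes CF^{-H}(u)$. Applying the t-modification functor $C\mapsto C^t$ of Ozsv\'{a}th--Stipsicz--Szab\'{o}, which is compatible with tensor products, turns this into $tCF^{-H}(g')\cong tCF^{-H}(g)\otimes_{\mathcal{R}} tCF^{-H}(u)$; and since $u$ represents a trivial knot, an explicit computation should give $tHF^{-H}(u)\cong\mathcal{R}\otimes W_t^{\otimes(k-1)}$, a free graded $\mathcal{R}$-module whose top non-torsion generator has t-grading $0$. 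Passing to homology then gives
\[
tHF^{-H}(g')\cong tHF^{-H}(g)\otimes_{\mathcal{R}}tHF^{-H}(u)\cong tHF^{-H}(g)\otimes W_t^{\otimes(k-1)}.
\]

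To conclude, I would read off $\Upsilon$ from this tensor identity. For $t\in[0,1]$ the two generators of $W_t$ sit in t-gradings $0$ and $-1+t\le 0$, so for any finitely generated graded $\mathcal{R}$-module $X$ the summand $X\llbracket 1-t\rrbracket$ of $X\otimes W_t$ has maximal non-torsion t-grading $\le$ that of $X$; hence the maximal t-grading of a non-torsion homogeneous element of $X\otimes W_t$ equals that of $X$. Iterating $k-1$ times gives $\Upsilon_{g'}(t)=\Upsilon_g(t)$ for $t\in[0,1]$ --- this is precisely the mechanism that makes Theorem~\ref{main} consistent with Theorem~\ref{main2}. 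The case $t\in[1,2]$ follows from the defining symmetry $\Upsilon_g(t)=\Upsilon_g(2-t)$, and $t=0$ from Proposition~\ref{prop:0}. Combining, $\Upsilon_{\mathcal{U}(f)}(t)=\Upsilon_{g'}(t)=\Upsilon_g(t)=\Upsilon_f(t)$.

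The step I expect to be the main obstacle is the K\"unneth splitting with the correct gradings and filtration: one must verify that the symmetrized Alexander filtration is additive under disjoint union of graph grid diagrams (the unsymmetrized filtrations add only up to an overall shift, and the symmetrization normalization depends on the grid number, which changes), and that taking t-modified homology of the tensor product introduces no spurious $\mathrm{Tor}$ terms --- the latter harmless precisely because $tHF^{-H}(u)$ is free over $\mathcal{R}$. By comparison, the explicit computation of $tHF^{-H}(u)$ for the fixed small diagram $u$ is routine once the conventions are pinned down.
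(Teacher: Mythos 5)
Your proposal takes a different route from the paper and, unfortunately, the route has a real gap at the very first structural step.

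You assert that if $g'$ is built by placing $g$ in an $n\times n$ diagonal block and $u$ in a complementary $k\times k$ diagonal block of an $(n+k)\times(n+k)$ grid, then ``every empty rectangle on $g'$ is supported entirely in the $g$-block or entirely in the $u$-block,'' so $CF^{-H}(g')\cong CF^{-H}(g)\otimes CF^{-H}(u)$. This is false. A state of $g'$ is an arbitrary bijection between the $n+k$ rows and $n+k$ columns of $g'$, so there are $(n+k)!$ states, not $n!\,k!$; most states do not respect the block decomposition, and the off-diagonal (marking-free) regions are not avoided. Moreover, even the block-respecting states do not span a subcomplex: starting from such a state $\mathbf{x}$, take one $\mathbf{x}$-point in the $g$-block and one in the $u$-block as the two initial corners of a rectangle; its terminal corners both land in the off-diagonal regions, producing a non-block state $\mathbf{y}$, and for suitable $\mathbf{x}$ this rectangle is empty and contributes to $\partial^-\mathbf{x}$. (On the torus one can even wrap such a rectangle.) So there is no chain-level K\"unneth splitting of the block form you describe, and the argument cannot get off the ground without a substantially different idea (a filtration/spectral sequence argument, or replacing the block picture with a sequence of (de)stabilizations). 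Your remaining concerns about additivity of the symmetrized Alexander filtration and $\mathrm{Tor}$ terms are secondary to this.

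The paper avoids all of this by using a single \emph{birth} move: $\mathcal{U}(f)$ is represented by adding just one new row and column to $g$, with the new square carrying both an $X$- and an $O^*$-marking (an extended grid diagram). Proposition~\ref{prop:birth} then reuses the stabilization maps $\mathcal{D}$, $\mathcal{S}$, $\mathcal{K}$ of Section~\ref{sub:sta} (their domains are independent of the markings) to get a filtered chain homotopy equivalence $CF^{-H}(g')\simeq CF^{-H}(g)[U_1]\oplus CF^{-H}(g)[U_1]\llbracket1,0\rrbracket$, which after $t$-modification gives $tHF^{-H}(g')\cong tHF^{-H}(g)\oplus tHF^{-H}(g)\llbracket1\rrbracket$; since the shift $\llbracket 1\rrbracket$ only lowers $t$-gradings, the maximum $t$-grading of a non-torsion element is unchanged, and $\Upsilon_{\mathcal{U}(f)}=\Upsilon_f$. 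If you want to repair your argument, the cleanest fix is to replace the block decomposition by this one-row-one-column birth move and invoke the stabilization machinery, exactly as in Proposition~\ref{prop:birth}.
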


For a balanced spatial graph $f$ and a vertex $v$, let $f\#_v\mathcal{O}$ denote the balanced spatial graph obtained by attaching a new unknotted, unlinked edge going from $v$ to $v$.
\begin{prop}
\label{prop:3}
For a balanced spatial graph $f$,
\[
\Upsilon_{f\#_v\mathcal{O}}(t)=\Upsilon_f(t).
\]
\end{prop}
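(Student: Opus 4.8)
The plan is to realize the operation $f\mapsto f\#_v\mathcal{O}$ by an explicit local change of a graph grid diagram and then to compute its effect on the $t$-modified homology. Fix a graph grid diagram $g$ for $f$ in which the vertex $v$ is encoded by a block of $O$- and $X$-markings. Since the new edge of $f\#_v\mathcal{O}$ is unknotted and unlinked, it may be isotoped into an arbitrarily small ball meeting $f$ only at $v$, so I would enlarge $g$ by inserting a few new rows and columns adjacent to the block of $v$, together with new $O$- and $X$-markings, arranged so that the arcs they determine form a small loop based at $v$ and so that the new markings occupy a sub-block interacting with the rest of $g$ only through the markings already present at $v$. A check against the Harvey--O'Donnol and Vance conditions shows that the resulting diagram $g'$ is a valid graph grid diagram for a balanced transverse spatial graph and that the graph it represents is $f\#_v\mathcal{O}$: at $v$ exactly one incoming and one outgoing half-edge have been added and joined into an unknotted, unlinked loop, while everything outside the small ball is unchanged.

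The core step is to prove that, as graded $\mathcal{R}$-modules,
\[
tHF^{-H}(g')\cong tHF^{-H}(g)\otimes W_t^{\otimes k}
\]
for some integer $k\geq 0$. I would establish this at the chain level. A grid state of $g'$ is a grid state of $g$ together with a local state in the new sub-block, and the new markings are chosen so that this sub-block behaves like an isolated grid diagram of the unknot, whose $t$-modified homology is $\mathcal{R}\otimes W_t^{\otimes k}$. Because the new $O$'s and $X$'s sit in their own rows and columns, no rectangle contributing to the differential of $tCF^{-H}(g')$ joins the new sub-block to the genuinely old part of $g$; after a destabilization-type reduction absorbing the rectangles that touch the original markings at $v$, the differential splits as the differential of $tCF^{-H}(g)$ on the first tensor factor and the standard small-rectangle differential on the second, the second factor computing $\mathcal{R}\otimes W_t^{\otimes k}$. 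One then reads off the $t$-gradings from the positions of the new $O$- and $X$-markings, which contribute precisely the shifts recorded in $W_t\cong\mathbb{F}_0\oplus\mathbb{F}_{-1+t}$; the symmetrization $H$ only shifts the whole complex uniformly and so does not affect the tensor-decomposition statement. This is the same type of computation that underlies stabilization invariance and Theorem \ref{main}, carried out next to a vertex block, and keeping the relatively defined, symmetrized Alexander filtration and the induced $t$-grading consistent throughout --- so that the extra tensor factor comes out to be exactly $W_t^{\otimes k}$ and nothing coarser --- is the main obstacle. (If convenient, it is enough to match $tHF^{-H}(g')$ and $tHF^{-H}(g)$ up to tensoring with powers of $W_t$ on either side and then to invoke Theorem \ref{main}, so one need not track the precise grid numbers.)

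Granting the displayed isomorphism, the conclusion is formal, exactly as in the passage from Theorem \ref{main} to Theorem \ref{main2}. For $t\in[0,1]$ we have $-1+t\leq 0$, hence
\[
X\otimes W_t\cong X\oplus X\llbracket 1-t\rrbracket
\]
with $1-t\geq 0$, so the maximal $t$-grading of a homogeneous non-torsion element is the same in $X\otimes W_t$ as in $X$; applying this with $X=tHF^{-H}(g)$ gives $\Upsilon_{f\#_v\mathcal{O}}(t)=\Upsilon_f(t)$ for $t\in[0,1]$, and the equality extends to $t\in[1,2]$ by the defining symmetry $\Upsilon(t)=\Upsilon(2-t)$. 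This runs parallel to Proposition \ref{prop:2}, where the trivial loop occupies an isolated vertex block; the only genuinely new point is verifying that the enlarged local configuration still encodes a legitimate balanced transverse spatial graph representing $f$ away from the small ball, which is a direct inspection of the vertex conventions.
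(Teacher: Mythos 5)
Your overall strategy matches the paper's: encode $f\mapsto f\#_v\mathcal{O}$ by a local enlargement of a graph grid diagram $g$, show the $t$-modified homology of the enlarged diagram is $tHF^{-H}(g)$ tensored with (powers of) $W_t$, and then deduce the equality of $\Upsilon$ from the grading-shift argument already used after Theorem \ref{main}. The paper does this with exactly one new row and column, a stabilization$'$-type move performed next to an $O^*$-marking (Figure \ref{fig:wedge}), yielding precisely $tHF^{-H}(g')\cong tHF^{-H}(g)\otimes W_t$ (Proposition \ref{prop:wedge}); your ``a few new rows and columns'' and $W_t^{\otimes k}$ are a vaguer version of the same thing, and the final grading argument you give is the right one.

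There is, however, a concrete error in the core step. You claim that ``no rectangle contributing to the differential of $tCF^{-H}(g')$ joins the new sub-block to the genuinely old part of $g$,'' and that the complex therefore splits as a tensor product of the old differential with a small local differential. This is false: in a toroidal grid diagram, empty rectangles can span many rows and columns, and states in $\mathbf{I}(g')$ and $\mathbf{N}(g')$ are connected by plenty of rectangles crossing through both regions. That interaction is exactly why the stabilization$'$ invariance proof is nontrivial and requires the mapping-cone model $\mathrm{Cone}(U_1-U_2)$ together with the maps $\mathcal{D}$, $\mathcal{S}$ and the homotopy $\mathcal{K}$, rather than a naive direct-sum or tensor splitting. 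You gesture at this with ``after a destabilization-type reduction,'' but the actual justification the paper gives --- that the domain types $iL$, $iR$, $oL$, $oR$, $K1$, $K2$, $K3$ and hence the maps $\mathcal{D}$, $\mathcal{S}$, $\mathcal{K}$ are defined purely by local multiplicities and are therefore insensitive to the presence of the extra $O^*$-marking in the $2\times2$ block, so that Proposition \ref{chainho3} carries over verbatim, and Proposition \ref{ct=ct} then lifts the resulting filtered chain homotopy equivalence to the $t$-modified level --- is the content you need and do not supply. Replacing the incorrect splitting claim by this marking-independence observation, and fixing $k=1$ by taking exactly one new row and column, closes the gap and recovers the paper's proof.
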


\subsection{Outline of the paper}
In Section 2, we review grid homology for transverse spatial graphs and Vance's the symmetrized Alexander filtration.
In Section 3, we define the combinatorial t-modified chain complex directly.
In Section 4, we give an alternative definition of the t-modified chain complex from the original grid chain complex using the idea of Ozsv\'{a}th, Stipsicz, and Szab\'{o} \cite{Concordance-homomorphisms-from-knot-Floer-homology} and see the two definitions are equivalent.
In Section 5, we give chain homotopy equivalences corresponding to the moves on grid diagrams.
Using these chain homotopy equivalences, in Section 6, we prove Theorem \ref{main} and Theorem \ref{main2}.
In Section 7, we consider link cobordisms on grid homology and give a proof for Theorem \ref{main3}.
Finally, in Section 8, we verify Proposition \ref{prop:0}, \ref{prop:1}, \ref{prop:2}, and \ref{prop:3}.

\section{Grid homology for transverse spatial graphs}
This section provides an overview of grid homology for transverse spatial graphs. See \cite{Heegaard_Floer_homology_of_spatial_graphs} for details. For grid homology for knots and links, see \cite{on-combinatorial-link-Floer-homology},\cite{grid-book}.

A \textbf{planar graph grid diagram} $g$ (figure \ref{fig:balanced}) is a $n\times n$ grid of squares some of which is decorated with an $X$- or $O$- (sometimes $O{}^*$-) marking such that it satisfies the following conditions.
\begin{enumerate}[(i)]
\item There is just one $O$ or $O{}^*$ on each row and column.
\item There is at least one $X$ on each row and column.
\item $O$'s (or $O{}^*$'s) and $X$'s do not share the same square.
\end{enumerate}
We denote the set of $O$-markings by $\mathbb{O}$ and the set of $X$-markings by $\mathbb{X}$.
We often use the labeling of markings as $\{O_i\}_{i=1}^n$ and $\{X_j\}_{j=1}^m$.

For any transverse spatial graph $f$, we can take graph grid diagrams representing $f$.
The relation between a spatial graph and representing grid diagram is the following: Drawing horizontal segments from the $O-$ (or $O{}^*-$) marking to the $X$-markings in each row and vertical segments from the $X$-markings to the $O-$ (or $O{}^*-$) marking in each column and assuming that the vertical segments always cross above the horizontal segments, then we can recover the spatial graph from the corresponding graph grid diagram.
We think that $O^*$-markings correspond to vertices, $O$- and $X$-markings to the interior of edges of the transverse spatial graph.

A \textbf{toroidal graph grid diagram} is a graph grid diagram that we think it as a diagram on  the torus obtained by identifying edges in a natural way.
We assume that every toroidal diagram is oriented in a natural way.

Harvey and O'Donnol showed that any two graph grid diagrams representing the same spatial graph are connected by a finite sequence of the graph grid moves.
The graph grid moves are the following three moves on the torus (See also \cite[Section 3.4]{Heegaard_Floer_homology_of_spatial_graphs})$\colon$
\begin{itemize}
\item \textbf{Cyclic permutation} (figure 2) permuting the rows or columns cyclically.
\item \textbf{Commutation$'$} (figure 3) permuting two adjacent columns satisfying the following condition; there are vertical line segments $\textrm{LS}_1,\textrm{LS}_2$ on the torus such that (1) $\mathrm{LS}_1\cup\mathrm{LS}_2$ contain all the $X$'s and $O$'s in the two adjacent columns, (2) the projection of $\mathrm{LS}_1\cup\mathrm{LS}_2$ to a single vertical circle $\beta_i$ is $\beta_i$
, and (3) the projection of their endpoints, $\partial(\mathrm{LS}_1)\cup\partial(\mathrm{LS}_2)$, to a single $\beta_i$ is precisely two points. Permuting two rows is defined in the same way.
\item \textbf{(De-)stabilization$'$} (figure 4) let $g$ be an $n\times n$ graph grid diagram and choose an $X$-marking. Then $g'$ is called a stabilization$'$ of $g$ if it is an $(n+1)\times(n+1)$ graph grid diagram obtained by adding one new row and column next to the $X$-marking of $g$, moving the $X$-marking to next column, and putting new one $O$-marking just above the $X$-marking and one $X$-marking just upper left of the $X$-marking. The inverse of stabilization$'$ is called destabilization$'$.
\end{itemize}

\begin{figure}
\centering
\includegraphics[scale=0.7]{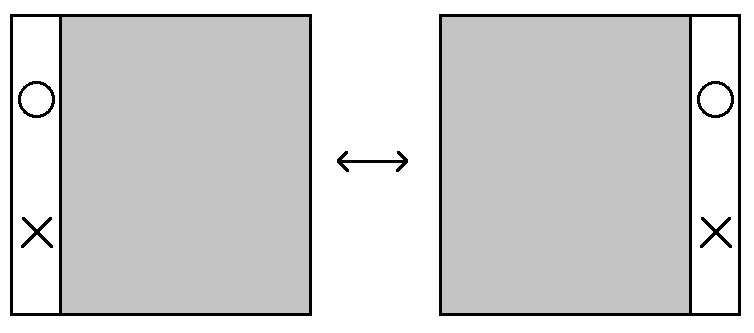}
\caption{cyclic permutation}

\centering
\includegraphics[scale=0.7]{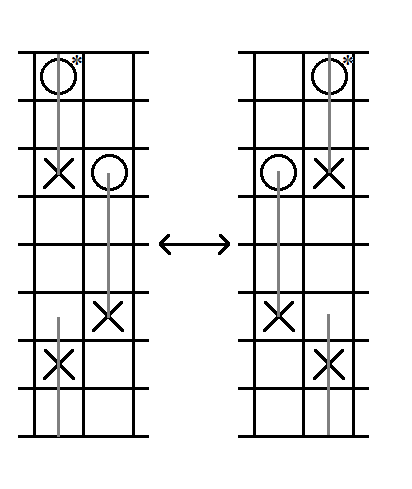}
\caption{commutation$'$, gray lines are $\mathrm{LS}_1$ and $\mathrm{LS}_2$}

\centering
\includegraphics[scale=0.7]{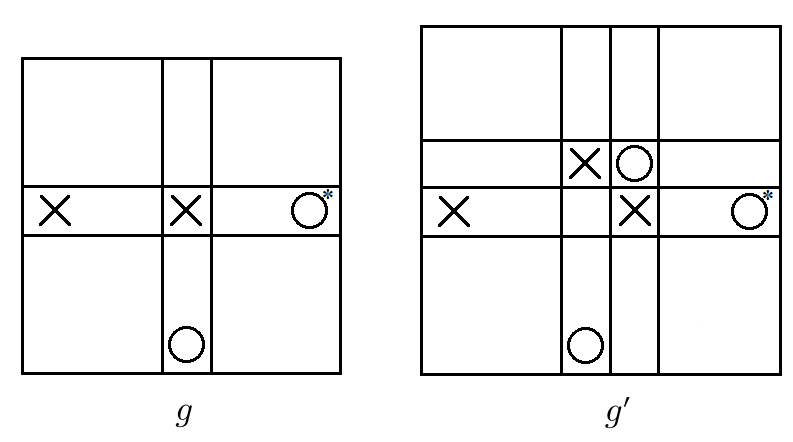}
\caption{stabilization$'$}
\end{figure}

We write the horizontal circles and vertical circles which separate the torus into squares as $\boldsymbol{\alpha}=\{\alpha_i\}_{i=1}^n$ and $\boldsymbol{\beta}=\{\beta_j\}_{j=1}^n$ respectively.

A \textbf{state} $\mathbf{x}$ of $g$ is a bijection $\boldsymbol{\alpha}\rightarrow\boldsymbol{\beta}$.
We denote by $\mathbf{S}(g)$ the set of states of $g$.
We describe a state as $n$ points on a toroidal graph grid diagram (figure \ref{fig:state}).

\begin{figure}[ht]
\centering
\includegraphics[scale=0.6]{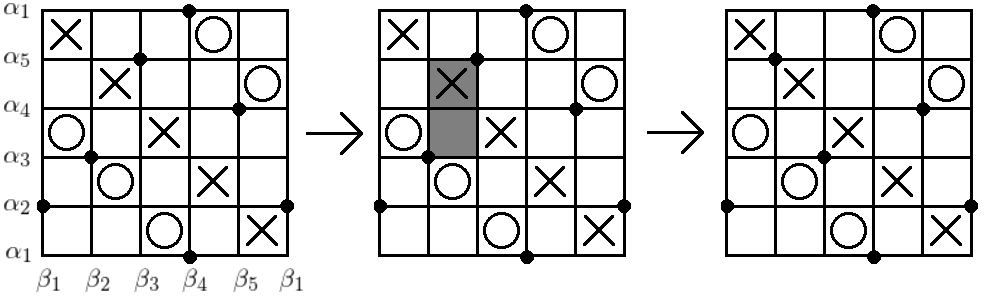}
\caption{an example of a state and a rectangle}
\label{fig:state}
\end{figure}

We think that there are $n\times n$ squares on $g$ that is separated by $\boldsymbol{\alpha}\cup\boldsymbol{\beta}$.
Fix $\mathbf{x,y}\in S$(g), a \textbf{domain} $p$ from $\mathbf{x}$ to $\mathbf{y}$ is a formal sum of the closure of squares satisfying $\partial(\partial_\alpha p)=\mathbf{y}-\mathbf{x}$ and $\partial(\partial_\beta p)=\mathbf{x}-\mathbf{y}$, where $\partial_\alpha p$ is the portion of the boundary of $p$ in the horizontal circles $\alpha_1\cup\dots\cup\alpha_n$ and $\partial_\beta p$ is the portion of the boundary of $p$ in the vertical ones.
A domain $p$ is \textbf{positive} if the coefficient of any square is non-negative.
Here we always assume that any domain is positive.
Let $\pi(\mathbf{x,y})$ denote the set of domains from $\mathbf{x}$ to $\mathbf{y}$.

Consider $\mathbf{x,y\in S}(g)$ that coincide with $n-2$ points.
An \textbf{rectangle} $r$ from $\mathbf{x}$ to $\mathbf{y}$ is a domain that satisfies that $\partial r$ is the union of four segments.
A rectangle $r$ is an \textbf{empty rectangle} if $\mathbf{x}\cap\mathrm{Int}(r)=\mathbf{y}\cap\mathrm{Int}(r)=\emptyset$.
Let $\mathrm{Rect}^\circ(\mathbf{x,y})$ be the set of empty rectangles from $\mathbf{x}$ to $\mathbf{y}$.

The grid chain complex $(CF^-(g),\partial^-)$ is a module over $\mathbb{F}[U_1,\dots,U_n]$ freely generated by $\mathbf{S}(g)$, where $\mathbb{F}=\mathbb{Z}/2\mathbb{Z}$ and the $U_i$'s are the formal variables corresponding to the $O_i$'s in $g$.
The differential $\partial^-$ is defined as counting empty rectangles by
\[
\partial^-(\mathbf{x})=\sum_{\mathbf{y}\in\mathbf{S}(g)}\left(
\sum_{r\in \mathrm{Rect}^\circ(\mathbf{x,y})}
U_1^{O_1(r)}\cdots U_n^{O_n(r)}
\right)\mathbf{y},
\]
where $O_i(r)=1$ if $r$ contains $O_i$ and $O_i(r)=0$ otherwise for $i=1,\dots,n$. 

There are two gradings for $CF^-(g)$, the \textbf{Maslov grading} and the \textbf{Alexander grading}.
A planar realization of toroidal diagram $g$ is a planar figure obtained by cutting toroidal diagram $g$ along $\alpha_i$ and $\beta_j$ for some $i$ and $j$, and putting on $[0,n)\times[0,n)\in\mathbb{R}^2$ in a natural way.
For two points $(a_1,a_2),(b_1,b_2)\subset\mathbb{R}^2$, let $(a_1,a_2)<(b_1,b_2)$ if $a_1<b_1$ and $a_2<b_2$.
For two sets of finitely points $A,B\subset\mathbb{R}^2$, let $\mathcal{I}(A,B)$ be the number of pairs $a\in A,b\in B$ with $a<b$ and let $\mathcal{J}(A,B)=(\mathcal{I}(A,B)+\mathcal{I}(B,A))/2$.
Let $m_i$ be the number of $X$-markings in the row caontaining $O_i$ ($i=1,\dots,n$).
Then for $\mathbf{x\in S}(g)$, the Maslov grading $M(\mathbf{x})$ and the Alexander grading $A(\mathbf{x})$ are defined by
\begin{align}
M(\mathbf{x})=\mathcal{J}(\mathbf{x}-\mathbb{O},\mathbf{x}-\mathbb{O})+1,
\label{mm}
\\
A(\mathbf{x})=\mathcal{J}(\mathbf{x},\mathbb{X}-\sum_{i=1}^nm_iO_i).
\label{aa}
\end{align}
These two gradings are extended to the whole of $CF^-(g)$ by
\begin{align}
\label{uu}
M(U_i)=-2,A(U_i)=-m_i\ (i=1,\dots,n).
\end{align}
Note that the Alexander grading here is the same as Vance's definition, while the original Alexander grading by Harvey and O'Donnol has values in $H_1(S^3-f(G))$, where $f\colon G\rightarrow S^3$ is the transverse spatial graph represented by $g$. The Alexander grading here is given from the original one by taking the canonical homomorphism $H_1(S^3-f(G))\rightarrow\mathbb{Z}$ which sends the generators to 1.
Also, note that the Alexander grading is not well-defined as a toroidal diagram, however, relative Alexander grading $A^{rel}(\mathbf{x,y})=A(\mathbf{x})-A(\mathbf{y})$ is well-defined.

It is shown that the differential $\partial^-$ drops Maslov grading by 1 and preserves or drops Alexander grading, so $(CF^-(g),\partial^-)$ is a Maslov graded, Alexander filtered chain complex \cite[Proposition 3.7]{grid-tau--Vance-spatial}.

Suppose the $O$-markings are labeled so that $O_1,\dots,O_V$ are $O^*$-markings and $O_{V+1},\dots,O_n$ are $O$-markings. Let $\mathcal{U}$ be the minimal subcomplex of $CF^-(g)$ containing $U_1CF^-(g)\cup\dots\cup U_VCF^-(g)$. Then $(\widehat{CF}(g),\widehat{\partial})$ is a Maslov graded, Alexander filtered chain complex over $\mathbb{F}$-vector space obtained by letting $\widehat{CF}(g)=CF^-(g)/\mathcal{U}$ and $\widehat{\partial}$ be the map induced by $\partial^-$.

We denote by $\{\mathcal{F}^-_m(g)\}_{m\in\mathbb{Z}}$ (respectively $\{\widehat{\mathcal{F}}_m(g)\}_{m\in\mathbb{Z}}$) the Alexander filtration of $CF^-(g)$ (respectively $\widehat{CF}(g)$).

\begin{dfn}[{\cite[Definition3.10]{grid-tau--Vance-spatial}}]
\label{symmetrizedA}
For a graph grid diagram $g$, define the symmetrized Alexander filtration $\{\widehat{\mathcal{F}}_m(g)\}_{m\in\frac{1}{2}\mathbb{Z}}$ to be the absolute Alexander filtration obtained by fixing the relative Alexander grading so that $m_{\mathrm{max}}(g)=-m_{\mathrm{min}}(g)$, where
\begin{align*}
m_{\mathrm{max}}(g)&=\mathrm{max}\left\{m|H_*(\widehat{\mathcal{F}}_m(g)/\widehat{\mathcal{F}}_{m-1}(g))\neq0\right\},\\
m_{\mathrm{min}}(g)&=\mathrm{min}\left\{m|H_*(\widehat{\mathcal{F}}_m(g)/\widehat{\mathcal{F}}_{m-1}(g))\neq0\right\}.
\end{align*}
\end{dfn}
\begin{dfn}
The \textbf{symmetrized Alexander grading} $A^H\colon\mathbf{S}(g)\rightarrow\frac{1}{2}\mathbb{Z}$ is determined by symmetrized Alexander filtration $\{\widehat{\mathcal{F}}_m(g)\}_{m\in\frac{1}{2}\mathbb{Z}}$ so that for $\mathbf{x\in S}(g)$, the value of $A^H(\mathbf{x})$ is the maximal filtration level which $\mathbf{x}\in\widehat{CF}(g)$ belongs.
\end{dfn}
Let $CF^{-H}(g)$ be a Maslov graded, symmetrized Alexander filtered chain complex obtained from $CF^-(g)$ by using the symmetrized Alexander grading $A^H$ rather than the Alexander grading $A$.

The homology of the associated graded object of $CF^{-H}(g)$ is an invariant for balanced spatial graphs \cite[Theorem 3.15]{grid-tau--Vance-spatial}.

\section{The t-modified chain complex}
This section provides the definition of the t-modified chain complex $tCF^{-H}(g)$.
The t-modified chain complex here is the extension of one of F\"{o}ldv\'{a}ri.

First of all, we define the based ring $\mathcal{R}$ of $tCF^{-H}(g)$.

A set $A\subset\mathbb{R}$ is \textbf{well-ordered} if any subset $A'\subset A$ has a minimal element.
\begin{dfn}[{\cite[Definition 3.1]{Concordance-homomorphisms-from-knot-Floer-homology}}]
\label{basedring}
Let $\mathbb{R}_{\geq0}$ denote the set of nonnegative real numbers. The ring of long power series $\mathcal{R}$ is defined as follows.
As an abelian group, $\mathcal{R}$ is the group of formal sums
\[
\left\{\sum_{\alpha\in A}v^\alpha|A\subset\mathbb{R}_{\geq0},A\ \mathrm{is\ well-ordered}\right\},
\]
\end{dfn}

The sum in $\mathcal{R}$ is given by the formula
\[
\left(\sum_{\alpha\in A}v^\alpha\right)+\left(\sum_{\beta\in B}v^\beta\right)=\sum_{\gamma\in C=A\cup B\setminus A\cap B}v^\gamma\ ,
\]
and the product is given by the formula
\[
\left(\sum_{\alpha\in A}v^\alpha\right)\cdot\left(\sum_{\beta\in B}v^\beta\right)=\sum_{\gamma\in A+B}\#\{(\alpha,\beta)\in A\times B|\alpha+\beta=\gamma\}\cdot v^\gamma,
\]
where
\[
A+B=\left\{\gamma|\gamma=\alpha+\beta\ for\ some\ \alpha\in A\ and\ \beta\in B\right\}.
\]
It is straightforward to check that the above operations are well-defined.

Then we define the t-modified chain complex $tCF^{-H}(g)$.
For a domain $p$ as a formal sum of squares, let $O_i(p)$ denote the coefficient of the square containing $O_i$, and let
\[
|\mathbb{O}\cap p|:=\sum_{i=1}^nO_i(p).
\]
We define $|\mathbb{X}\cap p|$ in the same manner.

\begin{dfn}
\label{t-mod}
For $t\in[0,2]$, the t-modified grid complex $tCF^{-H}(g)$ is a free module over $\mathcal{R}$ generated by $\mathbf{S}(g)$ with the $\mathcal{R}$-module endmorphism $\partial^-_t$ defined by
\begin{equation}
\label{differential_combinatorial}
\partial_t^-(\mathbf{x})=\sum_{\mathbf{y}\in\mathbf{S}(g)}\left(
\sum_{r\in \mathrm{Rect}^\circ(\mathbf{x,y})}v^{t|\mathbb{X}\cap r|+2|\mathbb{O}\cap r|-t\left(\sum_{O_i\in\mathbb{O}\cap r}m_i\right)}
\right)\mathbf{y},
\end{equation}
where $m_i$ is the number of $X$-markings on the same row as $O_i$ ($i=1,\dots,n$).
\end{dfn}

\begin{dfn}
For $\mathbf{x\in S}(g)$ and $\alpha>0$, the \textbf{t-grading} $\mathrm{gr}_t$ is,
\begin{align}
\mathrm{gr}_t(v^\alpha\mathbf{x})=M(\mathbf{x})-tA^H(\mathbf{x})-\alpha,
\end{align}
where $A^H$ is the symmetrized Alexander grading.
\end{dfn}

\begin{prop}
\label{t-ch1}
$\partial_t^-\circ\partial_t^-=0$.
\end{prop}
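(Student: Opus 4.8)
The plan is to mimic the standard grid-homology argument that $\partial^-\circ\partial^-=0$, adapting it to the $t$-modified weights. First I would expand $\partial_t^-\circ\partial_t^-(\mathbf{x})$ as a double sum over pairs of empty rectangles $(r_1,r_2)$ with $r_1\in\mathrm{Rect}^\circ(\mathbf{x},\mathbf{y})$ and $r_2\in\mathrm{Rect}^\circ(\mathbf{y},\mathbf{z})$, so that the coefficient of $\mathbf{z}$ is a sum of monomials $v^{w(r_1)+w(r_2)}$, where $w(r)=t|\mathbb{X}\cap r|+2|\mathbb{O}\cap r|-t\sum_{O_i\in\mathbb{O}\cap r}m_i$ is the exponent appearing in \eqref{differential_combinatorial}. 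The key observation is that $w$ is \emph{additive over domains}: for any two rectangles whose union (as a formal sum of squares) is a fixed domain $p\in\pi(\mathbf{x},\mathbf{z})$, the total exponent equals $w(p):=t|\mathbb{X}\cap p|+2|\mathbb{O}\cap p|-t\sum_i m_i O_i(p)$, which depends only on $p$ and not on the decomposition. Hence I can reorganize the double sum as $\sum_{\mathbf{z}}\sum_{p\in\pi(\mathbf{x},\mathbf{z})} N(p)\, v^{w(p)}\,\mathbf{z}$, where $N(p)$ counts the number of ways to write $p=r_1*r_2$ as a composite of two empty rectangles through an intermediate state $\mathbf{y}$.

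Next I would invoke the combinatorial classification of domains that admit such decompositions, exactly as in the knot/link and spatial-graph grid homology literature (Manolescu--Ozsv\'ath--Szab\'o--Thurston, Harvey--O'Donnol, Vance). The domains $p$ with $N(p)\neq 0$ are of a few types: (a) $p$ is a ``thin'' region that decomposes in exactly two ways (the two orders in which two disjoint rectangles can be composed, or the two ways of cutting an L-shaped / rectangular region), giving $N(p)=2$; (b) $p$ has a single decomposition but then $\mathbf{x}=\mathbf{z}$ and $p$ is a full row or column annulus, which cannot occur here because an empty rectangle never wraps all the way around; and the degenerate cases where $\mathbf{x}\ne\mathbf{z}$ always come in pairs. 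Since we work over $\mathbb{F}=\mathbb{Z}/2\mathbb{Z}$, every $p$ with a decomposition contributes $N(p)\equiv 0 \pmod 2$, so all terms cancel and $\partial_t^-\circ\partial_t^-(\mathbf{x})=0$.

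I expect the main obstacle to be verifying the additivity of the exponent $w$ under composition of rectangles together with the handling of the $O^*$-markings and the $m_i$ correction term. Concretely: if $p=r_1*r_2$ then $|\mathbb{O}\cap p|=|\mathbb{O}\cap r_1|+|\mathbb{O}\cap r_2|$ and $|\mathbb{X}\cap p|=|\mathbb{X}\cap r_1|+|\mathbb{X}\cap r_2|$ and $\sum_i m_i O_i(p)=\sum_i m_i O_i(r_1)+\sum_i m_i O_i(r_2)$ hold at the level of formal sums of squares, so $w(r_1)+w(r_2)=w(p)$; this is routine but must be stated carefully because $w$ is only used inside the exponent of $v$ and the ring $\mathcal{R}$ multiplies monomials by adding exponents, so additivity of $w$ is exactly what makes $v^{w(r_1)}\cdot v^{w(r_2)}=v^{w(p)}$. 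The other delicate point is to confirm that the decomposition-counting lemma from the graded theory (which is what underlies $\partial^-\circ\partial^-=0$ for $CF^-(g)$, \cite[Proposition 3.7]{grid-tau--Vance-spatial}) applies verbatim, since the $t$-modification changes only the coefficients and not the set of rectangles or their combinatorics. Once additivity and the mod-$2$ count are in place, the identity follows; I would close by remarking that this is the $t$-analogue of the proof that $(CF^-(g),\partial^-)$ is a chain complex, with the $\mathbb{F}[U_1,\dots,U_n]$-weights $U_1^{O_1(r)}\cdots U_n^{O_n(r)}$ replaced by the single $\mathcal{R}$-weight $v^{w(r)}$.
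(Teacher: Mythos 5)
Your additivity observation and the reorganization into $\sum_{p}N(p)\,v^{w(p)}$ are correct and match the first half of the paper's argument, as does your treatment of the cases with $\#\{\mathbf{x}\setminus(\mathbf{x}\cap\mathbf{z})\}=3$ or $4$ (these give $N(p)$ even, by the same argument as in the reference).

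However, there is a genuine gap in your case (b). You claim the annulus case ``cannot occur here because an empty rectangle never wraps all the way around,'' but this is a non sequitur: it is the \emph{composite} domain $p$ that is a thin row or column annulus (from $\mathbf{x}$ back to $\mathbf{x}$), while each of the two empty rectangles $r_1,r_2$ in its decomposition is an honest non-wrapping rectangle. For each $\mathbf{x}$ there are $2n$ such thin annuli, each with $N(p)=1$, so these terms do not vanish individually mod~$2$ and must be paired off. The paper pairs the horizontal and vertical thin annulus through the same $O_i$; for this pairing to cancel in $\mathcal{R}$ you need not just $|p_1\cap\mathbb{O}|=|p_2\cap\mathbb{O}|=1$ and the equality of the $\sum m_i O_i$ terms (both contain only $O_i$), but also $|p_1\cap\mathbb{X}|=|p_2\cap\mathbb{X}|$, since the exponent $w(p)$ involves $t|\mathbb{X}\cap p|$. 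That last equality holds precisely because the spatial graph is \emph{balanced}: at a vertex the number of $X$'s in the row of an $O^*$-marking equals the number in its column. Your argument as written never uses the balanced hypothesis and would (incorrectly) prove the proposition for arbitrary transverse spatial graphs, which signals that the annulus case has been lost. You need to add the annulus analysis and the balanced-graph pairing to complete the proof.
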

\begin{proof}
For states $\mathbf{x}$ and $\mathbf{z}$ and a fixed domain $p\in\pi(\mathbf{x,z})$ denote by $N(p)$ the number of ways to decompose $p$ as a composite of two empty rectangles $r_1*r_2$. Note that if $p=r_1*r_2$ for some $r_1\in\mathrm{Rect}^\circ(\mathbf{x,y}),r_2\in\mathrm{Rect}^\circ(\mathbf{y,z})$, then
\begin{align*}
|\mathbb{X}\cap p|&=|\mathbb{X}\cap r_1|+|\mathbb{X}\cap r_2|,\\|\mathbb{O}\cap p|&=|\mathbb{O}\cap r_1|+|\mathbb{O}\cap r_2|,\\\sum_{O_i\in\mathbb{O}\cap p}m_i&=\sum_{O_i\in\mathbb{O}\cap r_1}m_i+\sum_{O_i\in\mathbb{O}\cap r_2}m_i.
\end{align*}
It follows that for $\mathbf{x\in S}(g)$,
\begin{equation}
\label{pp}
\partial_t^-\circ\partial_t^-(\mathbf{x})=\sum_{\mathbf{z\in S}(g)}\left(
\sum_{p\in\pi(\mathbf{x,z})}N(p)v^{t|\mathbb{X}\cap p|+2|\mathbb{O}\cap p|-t\left(\sum_{O_i\in\mathbb{O}\cap p}m_i\right)}
\right)\mathbf{z}.
\end{equation}
If $\#\{\mathbf{x\setminus(x\cap z)}\}=4$ or $\#\{\mathbf{x\setminus(x\cap z)}\}=3$, the same argument in \cite[Theorem 3.2]{Concordance-homomorphisms-from-knot-Floer-homology} shows that $N(p)$ is even.
If $\mathbf{x=z}$, $p$ is an annulus and $N(p)=1$.
Since $r_1$ and $r_2$ are empty, this annulus has a height or width equal to 1.
Such an annulus is called a thin annulus.
For each $\mathbf{x}$, there are $2n$ thin annuli appearing in (\ref{pp}).
We can pair annuli that contain the same $O$-marking. 
If $(p_1,p_2)$ is such a pair, then $|p_1\cap\mathbb{X}|=|p_2\cap\mathbb{X}|$ because $f$ is a balanced spatial graph. 
So all terms are canceled in pairs (we are working modulo 2).
\end{proof}

\begin{prop}
\label{t-ch2}
The map $\partial^-_t$ drops the t-grading by one.
\end{prop}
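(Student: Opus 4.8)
The plan is to verify the degree shift directly on the generators, rectangle by rectangle, from the standard Maslov and Alexander grading‑change formulas of grid homology. Since $\partial_t^-$ is $\mathcal{R}$‑linear and multiplication by $v^\beta$ lowers $\mathrm{gr}_t$ by exactly $\beta$ (immediate from $\mathrm{gr}_t(v^\alpha\mathbf{x})=M(\mathbf{x})-tA^H(\mathbf{x})-\alpha$), it is enough to show that for every $\mathbf{x},\mathbf{y}\in\mathbf{S}(g)$ and every $r\in\mathrm{Rect}^\circ(\mathbf{x},\mathbf{y})$, the monomial $v^{\alpha(r)}\mathbf{y}$ that $r$ contributes to $\partial_t^-\mathbf{x}$, where
\[
\alpha(r)=t|\mathbb{X}\cap r|+2|\mathbb{O}\cap r|-t\sum_{O_i\in\mathbb{O}\cap r}m_i,
\]
satisfies $\mathrm{gr}_t(v^{\alpha(r)}\mathbf{y})=\mathrm{gr}_t(\mathbf{x})-1$.

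Next I would record the two grading changes across an empty rectangle. Because $\partial^-$ on $CF^-(g)$ drops the Maslov grading by exactly one \cite[Proposition 3.7]{grid-tau--Vance-spatial} and $M(U_i)=-2$, the term $U_1^{O_1(r)}\cdots U_n^{O_n(r)}\mathbf{y}$ appearing in $\partial^-\mathbf{x}$ has Maslov grading $M(\mathbf{y})-2|\mathbb{O}\cap r|=M(\mathbf{x})-1$, i.e. $M(\mathbf{x})-M(\mathbf{y})=1-2|\mathbb{O}\cap r|$. For the Alexander grading, the usual count of $\mathbb{X}-\sum_i m_iO_i$ inside $r$ (using $A(\mathbf{x})=\mathcal{J}(\mathbf{x},\mathbb{X}-\sum_i m_iO_i)$ together with the standard $\mathcal{J}$‑identity for rectangles; see \cite{Heegaard_Floer_homology_of_spatial_graphs}, \cite{grid-tau--Vance-spatial}) gives $A(\mathbf{x})-A(\mathbf{y})=|\mathbb{X}\cap r|-\sum_{O_i\in\mathbb{O}\cap r}m_i$; and since $A^H$ and $A$ are two absolute lifts of the same relative grading $A^{rel}$, we get $A^H(\mathbf{x})-A^H(\mathbf{y})=|\mathbb{X}\cap r|-\sum_{O_i\in\mathbb{O}\cap r}m_i$ as well.

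Then it is a one‑line substitution. With the abbreviations $a=|\mathbb{X}\cap r|$, $b=|\mathbb{O}\cap r|$ and $c=\sum_{O_i\in\mathbb{O}\cap r}m_i$, so that $\alpha(r)=ta+2b-tc$, the two formulas above read $M(\mathbf{x})-M(\mathbf{y})=1-2b$ and $A^H(\mathbf{x})-A^H(\mathbf{y})=a-c$, whence
\begin{align*}
\mathrm{gr}_t(v^{\alpha(r)}\mathbf{y})
&=M(\mathbf{y})-tA^H(\mathbf{y})-\alpha(r)\\
&=\bigl(M(\mathbf{x})-1+2b\bigr)-t\bigl(A^H(\mathbf{x})-a+c\bigr)-(ta+2b-tc)\\
&=M(\mathbf{x})-tA^H(\mathbf{x})-1=\mathrm{gr}_t(\mathbf{x})-1,
\end{align*}
since the terms involving $b$ and those involving $t(a-c)$ cancel. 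Combining with the reduction in the first paragraph (apply the above after clearing a common $v^\gamma$ from any homogeneous input) gives that $\partial_t^-$ is homogeneous of degree $-1$ for $\mathrm{gr}_t$.

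The computation is essentially bookkeeping; the only points needing care are (i) pinning down the correct weighted ($m_i$‑twisted) Alexander grading‑change across an empty rectangle in the spatial‑graph setting, which I would settle by citing Harvey–O'Donnol and Vance rather than re‑deriving the combinatorial $\mathcal{J}$‑identity, and (ii) the remark that passing from $A$ to the symmetrized $A^H$ only adds a global constant, so it leaves unchanged the relative gradings that actually enter $\partial_t^-$ and $\mathrm{gr}_t$. I do not expect any genuine obstacle beyond this.
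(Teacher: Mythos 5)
Your proof is correct and follows essentially the same route as the paper: cite the standard Maslov and ($m_i$-weighted) Alexander grading-change formulas across an empty rectangle, then substitute into the definition of $\mathrm{gr}_t$ and cancel. The only additions are the explicit reduction to generators and the remark that $A^H$ differs from $A$ by a global constant so their relative changes agree; both are harmless and, if anything, your write-up is cleaner than the paper's (which has a sign typo in the intermediate line that does not affect the conclusion).
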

\begin{proof}
By \cite[Lemma 2.5]{on-combinatorial-link-Floer-homology} and \cite[Lemma 3.5]{grid-tau--Vance-spatial}, for $r\in\mathrm{Rect}^\circ(\mathbf{x,y})$,
\begin{align}
\label{m-m}
M(\mathbf{x})-M(\mathbf{y}) &=1-2|r\cap\mathbb{O}|,\\
\label{a-a}
A^H(\mathbf{x})-A^H(\mathbf{y}) &=|r\cap\mathbb{X}|-\sum_{O_i\in r\cap\mathbb{O}}m_i.
\end{align}
If $v^{t|\mathbb{X}\cap r|+2|\mathbb{O}\cap r|-t\left(\sum_{O_i\in\mathbb{O}\cap r}m_i\right)}
\mathbf{y}$ appears in $\partial^-_t(\mathbf{x})$, then
\begin{align*}
\begin{split}
&\mathrm{gr}_t(v^{t|\mathbb{X}\cap r|+2|\mathbb{O}\cap r|-t\left(\sum_{O_i\in\mathbb{O}\cap r}m_i\right)}
\mathbf{y})\\ 
&=M(\mathbf{y})-tA^H(\mathbf{y})-\left(t|\mathbb{X}\cap r|+2|\mathbb{O}\cap r|-t\left(\sum_{O_i\in\mathbb{O}\cap r}m_i\right)\right)\\
&=M(\mathbf{x})-1+2|r\cap\mathbb{O}|-t\left(A^H(\mathbf{x})-|r\cap\mathbb{X}|-\left(\sum_{O_i\in\mathbb{O}}m_i\right)\right)\\&\quad-\left(t|\mathbb{X}\cap r|+2|\mathbb{O}\cap r|-t\left(\sum_{O_i\in\mathbb{O}\cap r}m_i\right)\right)
\\
&=M(\mathbf{x})-tA^H(\mathbf{x})-1.
\end{split}
\end{align*}
\end{proof}

\begin{prop}
\label{t-ch}
$(tCF^{-H}(g),\partial^-_t)$ is a t-graded chain complex over $\mathcal{R}$.
\end{prop}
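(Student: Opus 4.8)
This proposition is essentially a repackaging of Propositions~\ref{t-ch1} and~\ref{t-ch2}: once the grading conventions on $tCF^{-H}(g)$ are made precise, it follows by assembling those two results. So the plan is first to pin down the conventions, then to invoke the degree~$-1$ property together with $\partial^-_t\circ\partial^-_t=0$, and finally to address the one point that deserves care, namely that $\partial^-_t$ really lands in the free $\mathcal{R}$-module.

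For the conventions: $\mathcal{R}$ is not a graded ring in the usual sense, an element being a long, well-ordered sum of monomials $v^\alpha$ lying in different degrees. Following Ozsv\'{a}th, Stipsicz, and Szab\'{o} \cite{Concordance-homomorphisms-from-knot-Floer-homology}, I would regard $v^\alpha$ as lowering $\mathrm{gr}_t$ by $\alpha$, declare the homogeneous elements of $tCF^{-H}(g)$ to be the $v^\alpha\mathbf{x}$ with $\mathrm{gr}_t(v^\alpha\mathbf{x})=M(\mathbf{x})-tA^H(\mathbf{x})-\alpha$, and take a general element to be a well-ordered long sum of homogeneous ones. With this understood, Proposition~\ref{t-ch2} says precisely that $\partial^-_t$ carries a homogeneous element of $\mathrm{gr}_t$-degree $d$ to a finite sum of homogeneous elements of $\mathrm{gr}_t$-degree $d-1$, that is, $\partial^-_t$ is a degree~$-1$ map; and Proposition~\ref{t-ch1} gives $\partial^-_t\circ\partial^-_t=0$. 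Together these are exactly the statement that $(tCF^{-H}(g),\partial^-_t)$ is a t-graded chain complex over $\mathcal{R}$.

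The one step with genuine content is checking that the formula (\ref{differential_combinatorial}) takes values in $tCF^{-H}(g)$. Finiteness is immediate, since $g$ is a finite grid: $\mathbf{S}(g)$ and each $\mathrm{Rect}^\circ(\mathbf{x},\mathbf{y})$ are finite, so $\partial^-_t(\mathbf{x})$ is a finite $\mathbb{F}$-linear combination of monomials $v^\beta$ with $\beta=t|\mathbb{X}\cap r|+2|\mathbb{O}\cap r|-t\sum_{O_i\in\mathbb{O}\cap r}m_i$. It remains to see that each exponent $\beta$ is nonnegative, so that $v^\beta\in\mathcal{R}$ and hence $\partial^-_t(\mathbf{x})\in tCF^{-H}(g)$. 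I would rewrite $\beta=(2-t)|\mathbb{O}\cap r|+t\bigl(|\mathbb{O}\cap r|+|\mathbb{X}\cap r|-\sum_{O_i\in\mathbb{O}\cap r}m_i\bigr)$ and use $0\le t\le2$ to reduce this to the combinatorial inequality $|\mathbb{O}\cap r|+|\mathbb{X}\cap r|\ge\sum_{O_i\in\mathbb{O}\cap r}m_i$ for an empty rectangle $r$, which I would attack by a column-by-column count of how the $X$-markings of each column of $r$ are positioned relative to $r$, using that $g$ is balanced. This is the step where the knot/graph distinction bites, and the part of the argument I expect to need the most attention: for knot and link grid diagrams every $m_i=1$, so $\sum_{O_i\in\mathbb{O}\cap r}m_i=|\mathbb{O}\cap r|$ and the inequality is the triviality $|\mathbb{X}\cap r|\ge0$, whereas for graph grid diagrams the multiplicities $m_i$ may exceed $1$. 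In any case the exponents are bounded below by $-t\sum_i m_i$, so if the bare inequality were to fail one could instead work over the ring of well-ordered long series with exponents bounded below, with no effect on the grading. Everything else is formal.
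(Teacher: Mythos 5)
Your core argument---that the proposition is just the conjunction of Propositions~\ref{t-ch1} and \ref{t-ch2}---is precisely what the paper does; its proof is a one-line citation of those two results. The extra point you raise, that each exponent $\beta=t|\mathbb{X}\cap r|+2|\mathbb{O}\cap r|-t\sum_{O_i\in\mathbb{O}\cap r}m_i$ appearing in (\ref{differential_combinatorial}) must be nonnegative for $\partial^-_t$ to actually land in the free $\mathcal{R}$-module, is a genuine subtlety that the paper does not address; logically it belongs to Definition~\ref{t-mod} rather than to this proposition, since both Propositions~\ref{t-ch1} and \ref{t-ch2} already presuppose that $\partial^-_t$ is a well-defined $\mathcal{R}$-module endomorphism.

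Do not, however, expect your column-by-column argument for the inequality $|\mathbb{O}\cap r|+|\mathbb{X}\cap r|\geq\sum_{O_i\in\mathbb{O}\cap r}m_i$ to succeed in the graph setting: nothing in the definitions prevents an empty rectangle $r$ from containing a single $O^*$-marking $O_i$ with $m_i\geq 3$ and no $X$-markings at all (for instance a narrow $2\times 2$ rectangle whose two columns avoid every $X$ in the row of $O_i$), and then $\beta=2-tm_i<0$ already for $t>2/m_i$, which lies inside $[0,1]$ once $m_i\geq 3$. Since $m_i$ is bounded only by the valence of the corresponding vertex, the inequality genuinely fails for balanced graph grid diagrams with high-valence vertices, and this is exactly the difference from the knot and link case ($m_i\equiv 1$), where $\beta=t|\mathbb{X}\cap r|+(2-t)|\mathbb{O}\cap r|\geq 0$ is automatic. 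Your fallback---enlarging $\mathcal{R}$ to the ring of well-ordered long series with exponents bounded below---is the correct resolution: any nonzero such series becomes an element of $\mathcal{R}$ after multiplication by a suitable $v^{\alpha}$, so the torsion submodule of the resulting homology is unchanged and $\Upsilon$ is unaffected, and Propositions~\ref{t-ch1} and~\ref{t-ch2} go through verbatim over the larger ring.
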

\begin{proof}
From Proposition \ref{t-ch1} and Proposition \ref{t-ch2}, we conclude that $(tCF^{-H}(g),\partial^-_t)$ is a t-graded chain complex.
\end{proof}

\begin{dfn}
Let $tCF^{-H}_{d}(g)=\{\alpha\in tCF^-(g)|\mathrm{gr}_t(\alpha)=d\}$, and we define \textbf{t-modified graph grid homology} of $g$ to be
\begin{align*}
tHF^{-H}_{d}(g)&=\frac{\mathrm{Ker}(\partial_t^-)\cap tCF^{-H}_{d}(g)}{\mathrm{Im}(\partial_t^-)\cap tCF^{-H}_{d}(g)},\\
tHF^{-H}(g)&=\bigoplus_{d}tHF^{-H}_{d}(g).
\end{align*}
\end{dfn}

\section{Formal construction of the t-modified chain complex}
In this section, we give an alternative definition of the t-modified chain complex using the grid chain complex.
See \cite[Section 4]{Concordance-homomorphisms-from-knot-Floer-homology} for details. 

Suppose that $C$ is a finitely generated, Maslov graded, Alexander filtered chain complex over $\mathbb{F}[U]$. Let $\mathbf{x}$ be a generator of $C$ over $\mathbb{F}[U]$, with Maslov grading $M(\mathbf{x})$.
Since multiplication by $U$ drops the Maslov grading by 2, elements of Maslov grading $M(\mathbf{x})-1$ are linear combinations of elements of the form $U^{\frac{M(\mathbf{y})-M(\mathbf{x})+1}{2}}\mathbf{y}$, where $\mathbf{y}$ is a generator.
Then the differential on $C$ can be written as
\begin{equation}
\label{dd}
\partial(\mathbf{x})=\sum_{\mathbf{y}}c_{\mathbf{x},\mathbf{y}}\cdot U^{\frac{M(\mathbf{y})-M(\mathbf{x})+1}{2}}\mathbf{y},
\end{equation}
where $c_{\mathbf{x},\mathbf{y}}\in\{0,1\}$.

\begin{dfn}[Definition 4.1,\cite{Concordance-homomorphisms-from-knot-Floer-homology}]
\label{t-mod2}
For $t\in[0,2]$, suppose that $C$ is a finitely generated, Maslov graded, Alexander filtered chain complex over $\mathbb{F}[U]$, and let $\mathcal{R}$ be the ring of Definition \ref{basedring} (containing $\mathbb{F}[U]$ by $U=v^2$).
\textbf{The formal t-modified chain complex} $C^t$ of $C$ is defined as follows$\colon$
\begin{itemize}
\item As an $\mathcal{R}$-module, $C^t=C\otimes_{\mathbb{F}[U]}\mathcal{R}$
\item For each generator $\mathbf{x}$ of $C$, define $\mathrm{gr}_t(v^\alpha\mathbf{x})=M(\mathbf{x})-tA(\mathbf{x})-\alpha$
\item Endow the graded module $C^t$ with a differential
\begin{equation}
\label{differential_formal}
\partial_t(\mathbf{x})=\sum_{\mathbf{y}}c_{\mathbf{x},\mathbf{y}}\cdot v^{\mathrm{gr}_t(\mathbf{y})-\mathrm{gr}_t(\mathbf{x})+1}\mathbf{y},
\end{equation}

where $c_{\mathbf{x},\mathbf{y}}$ are determined by (\ref{dd}).
\end{itemize}
\end{dfn}

\begin{dfn}
For a graph grid diagram $g$, let $CF^{-H}_U(g)$ denote the quotient chain complex $CF^{-H}_U(g)=\frac{CF^{-H}(g)}{U_1=\dots=U_n}$ as a Maslov graded chain complex over $\mathbb{F}[U]$-module.
\end{dfn}
Note that $CF^{-H}_U(g)$ is a Maslov graded chain complex but \textbf{not} an Alexander filtered chain complex because the drop in the Alexander grading of each $U_i$ differs by (\ref{uu}).
However, Definition \ref{t-mod2} works for $CF^{-H}_U(g)$ because each generator has its Alexander grading.
Therefore we can define the formal t-modified chain complex $(CF^{-H}_U(g))^t$.

\begin{prop}
\label{U=t}
$(CF^{-H}_U(g))^t$ (applying Definition \ref{t-mod2}) is isomorphic to $tCF^{-H}(g)$ (from Definition \ref{t-mod}) as graded chain complexes.
\end{prop}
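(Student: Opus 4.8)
The plan is to exhibit an explicit $\mathcal{R}$-module isomorphism between the two complexes that is the identity on the generating set $\mathbf{S}(g)$, and then check that it intertwines both the gradings and the differentials. Since both $tCF^{-H}(g)$ and $(CF^{-H}_U(g))^t$ are, by construction, free $\mathcal{R}$-modules on $\mathbf{S}(g)$ (the former by Definition \ref{t-mod}, the latter because $CF^{-H}_U(g)=CF^{-H}(g)/(U_1=\dots=U_n)$ is free over $\mathbb{F}[U]$ on $\mathbf{S}(g)$ and $C^t=C\otimes_{\mathbb{F}[U]}\mathcal{R}$), the map $\Phi\colon \mathbf{x}\mapsto\mathbf{x}$ extends uniquely to an $\mathcal{R}$-module isomorphism. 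So the content is entirely in matching the extra structure.

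First I would check the gradings agree. In $tCF^{-H}(g)$ the $t$-grading is $\mathrm{gr}_t(v^\alpha\mathbf{x})=M(\mathbf{x})-tA^H(\mathbf{x})-\alpha$ by definition. In $(CF^{-H}_U(g))^t$, Definition \ref{t-mod2} sets $\mathrm{gr}_t(v^\alpha\mathbf{x})=M(\mathbf{x})-tA(\mathbf{x})-\alpha$ where $A$ is the Alexander grading of the complex to which we apply the construction; since we apply it to $CF^{-H}_U(g)$, whose underlying filtered structure comes from $CF^{-H}(g)$, the relevant Alexander grading is precisely the symmetrized one $A^H$. Hence the two $t$-gradings coincide on generators, and $\Phi$ is grading-preserving.

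Next, and this is the main point, I would compare the differentials. The differential of $CF^-(g)$ is $\partial^-(\mathbf{x})=\sum_{\mathbf{y}}\sum_{r\in\mathrm{Rect}^\circ(\mathbf{x},\mathbf{y})}U_1^{O_1(r)}\cdots U_n^{O_n(r)}\,\mathbf{y}$; passing to $CF^{-H}_U(g)$ we set every $U_i=U$, so the coefficient of $\mathbf{y}$ becomes $\sum_{r}U^{|\mathbb{O}\cap r|}$. Under $U=v^2$ this is $\sum_r v^{2|\mathbb{O}\cap r|}$. Now I invoke the relations \eqref{m-m} and \eqref{a-a}: for $r\in\mathrm{Rect}^\circ(\mathbf{x},\mathbf{y})$ we have $M(\mathbf{x})-M(\mathbf{y})=1-2|r\cap\mathbb{O}|$ and $A^H(\mathbf{x})-A^H(\mathbf{y})=|r\cap\mathbb{X}|-\sum_{O_i\in r\cap\mathbb{O}}m_i$. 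Therefore
\[
\mathrm{gr}_t(\mathbf{y})-\mathrm{gr}_t(\mathbf{x})+1 = -(M(\mathbf{x})-M(\mathbf{y}))+t(A^H(\mathbf{x})-A^H(\mathbf{y}))+1 = 2|r\cap\mathbb{O}| + t|r\cap\mathbb{X}| - t\sum_{O_i\in r\cap\mathbb{O}}m_i,
\]
which is exactly the exponent appearing in \eqref{differential_combinatorial}. Thus the formula \eqref{differential_formal} for the differential on $(CF^{-H}_U(g))^t$, computed rectangle-by-rectangle, reproduces \eqref{differential_combinatorial} term-by-term, and $\Phi$ intertwines $\partial_t$ with $\partial^-_t$. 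One subtlety to address carefully is that \eqref{dd} bundles together, for a fixed pair $\mathbf{x},\mathbf{y}$, all rectangles from $\mathbf{x}$ to $\mathbf{y}$ into a single coefficient $c_{\mathbf{x},\mathbf{y}}\in\{0,1\}$, whereas the combinatorial $\partial^-_t$ sums over individual rectangles; since $M(\mathbf{y})-M(\mathbf{x})$ determines $|r\cap\mathbb{O}|$ but not $|r\cap\mathbb{X}|$ or the markings met, distinct rectangles between the same two states can a priori contribute different powers of $v$. The correct reading of Definition \ref{t-mod2} in this setting is that one should apply the construction at the level of the chain complex over $\mathbb{F}[U_1,\dots,U_n]$ before collapsing, or equivalently record the coefficient as the full polynomial in $U$; I would spell out that with this (consistent) interpretation the sum $\sum_r v^{(\text{exponent})}$ is reproduced faithfully, so no information is lost.

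The step I expect to be the main obstacle is precisely this bookkeeping issue: making sure the passage ``$U_i\mapsto U$, then apply the formal $C^t$ construction'' genuinely yields \eqref{differential_combinatorial} and not a coarser sum, i.e. verifying that grouping by $(\mathbf{x},\mathbf{y})$ versus grouping by rectangle gives the same element of $\mathcal{R}$ once the exponents are computed via \eqref{m-m}–\eqref{a-a}. Everything else — freeness, the grading identification, $\mathcal{R}$-linearity, and the fact that $\Phi$ is a bijection on generators hence an isomorphism of $\mathcal{R}$-modules — is routine. Once the differentials are shown to agree, $\Phi$ is an isomorphism of $t$-graded chain complexes over $\mathcal{R}$, which is the claim.
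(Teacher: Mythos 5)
Your proof is correct and follows essentially the same route as the paper's: identify the free $\mathcal{R}$-modules on $\mathbf{S}(g)$, match the $t$-gradings, and reconcile the two differentials via \eqref{m-m} and \eqref{a-a} together with a mod-$2$ count of rectangles. The ``subtlety'' you flag in fact dissolves immediately: by \eqref{m-m} and \eqref{a-a} the exponent $t|\mathbb{X}\cap r|+2|\mathbb{O}\cap r|-t\sum_{O_i\in\mathbb{O}\cap r}m_i$ equals $\mathrm{gr}_t(\mathbf{y})-\mathrm{gr}_t(\mathbf{x})+1$ for \emph{every} $r\in\mathrm{Rect}^\circ(\mathbf{x},\mathbf{y})$ (each summand is separately pinned down by $(\mathbf{x},\mathbf{y})$), so distinct rectangles can never contribute different powers of $v$ and there is no need to reinterpret Definition~\ref{t-mod2}; the only remaining bookkeeping is that $c_{\mathbf{x},\mathbf{y}}=\#\mathrm{Rect}^\circ(\mathbf{x},\mathbf{y})\bmod 2$, which is exactly the cancellation the paper spells out.
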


\begin{proof}
Identifying the generators and their gradings is natural, so we only need to check that (\ref{differential_combinatorial}) and (\ref{differential_formal}) are equivalent.
In other words, we will check that for $\mathbf{x,y\in S}(g)$, 
\begin{equation}
c_{\mathbf{x,y}}=1\Leftrightarrow \#(\mathrm{Rect}^\circ(\mathbf{x,y}))=1,
\label{ct=ct1}
\end{equation}
and
\begin{equation}
\label{ct=ct2}
\mathrm{gr}_t(\mathbf{y})-\mathrm{gr}_t(\mathbf{x})+1=t|\mathbb{X}\cap r|+2|\mathbb{O}\cap r|-t(\sum_{O_i\in\mathbb{O}\cap r}m_i).
\end{equation}
We will consider the case of $\#(\mathrm{Rect}^\circ(\mathbf{x,y}))=2$.
Let $r,r'$ be the two rectangles from $\mathbf{x}$ to $\mathbf{y}$.
Then $U_1^{O_1(r)}\cdots U_n^{O_n(r)}\mathbf{y}$ and $U_1^{O_1(r')}\cdots U_n^{O_n(r')}\mathbf{y}$ appear in $\partial^-(\mathbf{x})$.
Because the numbers of $O$- and $O^*$-markings the two rectangles contain are the same, the quotient map $CF^{-H}(g)\to CF^{-H}_U(g)$ sends these two terms to the same element (that is $U^{\frac{M(\mathbf{y})-M(\mathbf{x})+1}{2}}\mathbf{y}$), so they are canceled modulo 2.
Therefore we get $c_{\mathbf{x,y}}=0$.

Also it is clear that $\#(\mathrm{Rect}^\circ(\mathbf{x,y}))=0\Rightarrow c_{\mathbf{x,y}}=0$ and that $\#(\mathrm{Rect}^\circ(\mathbf{x,y}))=1\Rightarrow c_{\mathbf{x,y}}=1$.
Therefore (\ref{ct=ct1}) is proved.

We have (\ref{ct=ct2}) directly from (\ref{m-m}) and (\ref{a-a}).
\end{proof}

Next, we introduce the following proposition playing an important role in this paper.

\begin{prop}[{\cite[Proposition 4.4]{Concordance-homomorphisms-from-knot-Floer-homology}}]
\label{ct=ct}
Let $f\colon C\rightarrow C'$ be a Maslov graded, Alexander filtered chain map between chain complexes over $\mathbb{F}[U]$. There is a corresponding graded chain map $f^t\colon C^t\rightarrow (C')^t$, with the following properties$\colon$
\begin{itemize}
\item If $f\colon C\rightarrow C'$ and $g\colon C'\rightarrow C''$ are two Maslov graded, Alexander filtered chain maps, then
\[
(g\circ f)^t=g^t\circ f^t
\]
\item If $f,g\colon C\rightarrow C'$ are chain homotopic to each other, then $f^t$ and $g^t$ are chain homotopic to each other. In particular, filtered chain homotopy equivalent complexes are transformed by the construction $C\mapsto C^t$ into homotopy equivalent complexes.
\end{itemize}
\end{prop}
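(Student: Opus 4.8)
The plan is to transcribe, in this combinatorial setting, the argument of Ozsv\'ath--Stipsicz--Szab\'o \cite[Proposition 4.4]{Concordance-homomorphisms-from-knot-Floer-homology}: nothing in it is special to knot Floer homology, so it applies essentially verbatim, and I will only point out where the hypotheses ``Alexander \emph{filtered}'' and $t\in[0,2]$ are used. Since $f$ is $\mathbb{F}[U]$-linear and homogeneous of Maslov degree $0$, for a generator $\mathbf{x}$ of $C$ we may write, exactly as in (\ref{dd}),
\[
f(\mathbf{x})=\sum_{\mathbf{y}}c_{\mathbf{x},\mathbf{y}}\cdot U^{\frac{M(\mathbf{y})-M(\mathbf{x})}{2}}\mathbf{y},\qquad c_{\mathbf{x},\mathbf{y}}\in\{0,1\},
\]
where only those $\mathbf{y}$ with $M(\mathbf{y})\ge M(\mathbf{x})$ and $M(\mathbf{y})\equiv M(\mathbf{x})\pmod 2$ contribute. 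I then define $f^t$ on generators by $f^t(\mathbf{x})=\sum_{\mathbf{y}}c_{\mathbf{x},\mathbf{y}}\cdot v^{\mathrm{gr}_t(\mathbf{y})-\mathrm{gr}_t(\mathbf{x})}\mathbf{y}$ and extend $\mathcal{R}$-linearly. The first thing to verify is that this is legitimate, i.e.\ that every exponent lies in $\mathbb{R}_{\geq 0}$ so that $v^{\mathrm{gr}_t(\mathbf{y})-\mathrm{gr}_t(\mathbf{x})}\in\mathcal{R}$: writing $k=\frac{1}{2}(M(\mathbf{y})-M(\mathbf{x}))\ge 0$, the fact that $f$ respects the Alexander filtration forces $A(U^k\mathbf{y})\le A(\mathbf{x})$, i.e.\ $A(\mathbf{y})-A(\mathbf{x})\le k$, whence
\[
\mathrm{gr}_t(\mathbf{y})-\mathrm{gr}_t(\mathbf{x})=\bigl(M(\mathbf{y})-M(\mathbf{x})\bigr)-t\bigl(A(\mathbf{y})-A(\mathbf{x})\bigr)\ge 2k-tk=(2-t)k\ge 0,
\]
using $t\le 2$. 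That $f^t$ is graded of degree $0$ is then immediate from $\mathrm{gr}_t(v^\alpha\mathbf{y})=\mathrm{gr}_t(\mathbf{y})-\alpha$, and $(\mathrm{id})^t=\mathrm{id}$ is clear.

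The remaining assertions -- that $f^t$ is a chain map, that $(g\circ f)^t=g^t\circ f^t$, and that homotopic maps produce homotopic $f^t$ -- are all the same bookkeeping, exploiting that the $v$-exponents telescope: a term of $f^t$ carrying $v^{\mathrm{gr}_t(\mathbf{y})-\mathrm{gr}_t(\mathbf{x})}$ followed by a term of $\partial'_t$ carrying $v^{\mathrm{gr}_t(\mathbf{z})-\mathrm{gr}_t(\mathbf{y})+1}$ produces $v^{\mathrm{gr}_t(\mathbf{z})-\mathrm{gr}_t(\mathbf{x})+1}$, which depends only on $\mathbf{x}$ and $\mathbf{z}$; the same holds for $\partial_t$ followed by $f^t$, and for a composite of two chain maps. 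Hence the coefficient of $\mathbf{z}$ in $\partial'_tf^t(\mathbf{x})$ and in $f^t\partial_t(\mathbf{x})$ are both of the form $\bigl(\sum_{\mathbf{y}}(\cdots)\bigr)v^{\mathrm{gr}_t(\mathbf{z})-\mathrm{gr}_t(\mathbf{x})+1}$, and the two $\mathbb{F}$-coefficients in parentheses agree because they are precisely the two sides of the equality of the coefficient of $U^{\frac{M(\mathbf{z})-M(\mathbf{x})+1}{2}}\mathbf{z}$ in $\partial'f=f\partial$, an identity holding in $C$ over $\mathbb{F}[U]$ (all $U$-powers entering a fixed $\mathbf{z}$ being equal by Maslov-homogeneity, the coefficients may be compared termwise). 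Functoriality is the identical computation with a second chain map in place of $\partial$. For homotopies, a filtered chain homotopy $H$ with $f-g=\partial'H+H\partial$ is $\mathbb{F}[U]$-linear, filtered, and homogeneous of Maslov degree $+1$, so I set $H^t(\mathbf{x})=\sum_{\mathbf{y}}d_{\mathbf{x},\mathbf{y}}\cdot v^{\mathrm{gr}_t(\mathbf{y})-\mathrm{gr}_t(\mathbf{x})-1}\mathbf{y}$ (the exponent again nonnegative, now with $M(\mathbf{y})-M(\mathbf{x})=2k+1$ and $A(\mathbf{y})-A(\mathbf{x})\le k$), and the same telescoping gives $f^t-g^t=\partial'_tH^t+H^t\partial_t$. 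The ``in particular'' clause follows by feeding a filtered homotopy inverse $h$ of $f$ into functoriality and homotopy-invariance: $h^tf^t=(hf)^t\simeq(\mathrm{id})^t=\mathrm{id}$ and $f^th^t=(fh)^t\simeq\mathrm{id}$, so $f^t$ is a chain homotopy equivalence.

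The only genuinely non-formal point is the nonnegativity of the $v$-exponents, which is exactly where ``filtered'' (as opposed to merely ``graded'') and the restriction $t\le 2$ enter; everything else is degree arithmetic together with a reduction of $\mathbb{F}$-coefficient identities from $C$ to $C^t$. I therefore anticipate no real obstacle beyond the care needed to apply the construction only to honestly filtered $\mathbb{F}[U]$-complexes.
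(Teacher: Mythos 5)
Your proof is correct, and it is essentially the argument of Ozsváth--Stipsicz--Szabó that the paper invokes here by citation without reproving; the crux — that the filtered hypothesis together with $t\le 2$ forces the exponents $\mathrm{gr}_t(\mathbf{y})-\mathrm{gr}_t(\mathbf{x})$ to be nonnegative, after which chain-map, functoriality, and homotopy identities reduce by telescoping to coefficient identities already holding in $C$ — is exactly what makes the construction $C\mapsto C^t$ work.
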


Again note that this proposition can be applied as $CF^{-H}_U(g)\mapsto(CF^{-H}_U(g))^t$ even if $CF^{-H}_U(g)$ is just Maslov graded chain complex with $A^H$ grading only for generators.
Similar to (\ref{dd}), Maslov graded chain map $f\colon CF^{-H}_U(g)\rightarrow CF^{-H}_U(g')$ can be written as
\[
f(\mathbf{x})=\sum_{\mathbf{y}}c_{\mathbf{x},\mathbf{y}}\cdot U^{\frac{M(\mathbf{y})-M(\mathbf{x})}{2}}\mathbf{y},
\]
then the corresponding graded chain map is
\[
f^t(\mathbf{x})=\sum_{\mathbf{y}}c_{\mathbf{x},\mathbf{y}}\cdot v^{\mathrm{gr}_t(\mathbf{y})-\mathrm{gr}_t(\mathbf{x})}\mathbf{y}.
\]
This construction gives induced chain homotopy equivalence for the t-modified chain complexes.

\section{Chain homotopy equivalences for grid chain complexes}
This section provides the filtered chain homotopy equivalences for grid chain complexes connected by each graph grid move.
These filtered chain homotopy equivalences are lifted into chain homotopy equivalences for the t-modified chain complexes by applying Proposition \ref{ct=ct}.
Chain homotopy equivalences for a cyclic permutation and a commutation$'$ are already known.
We mainly observe the case of stabilization$'$.

\subsection{cyclic permutation}
\begin{prop}
\label{chainho1}
If $g$ and $g'$ are connected by a single8 cyclic permutation, then as Maslov graded, Alexander filtered chain complexes, $CF^{-H}(g)$ and $CF^{-H}(g')$ are chain homotopy equivalent.

\end{prop}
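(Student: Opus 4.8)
The plan is to exhibit an explicit filtered chain map realizing the cyclic permutation and show it is an isomorphism, which immediately gives a chain homotopy equivalence. First I would set up notation: a cyclic permutation of rows or columns moves the markings along the torus, and since a toroidal grid diagram is intrinsically defined on the torus, the underlying set of states, the set of empty rectangles, and the counts $|\mathbb{O}\cap r|$, $|\mathbb{X}\cap r|$ are all unchanged by this move. So there is a tautological bijection $\phi\colon \mathbf{S}(g)\to\mathbf{S}(g')$ sending a state to ``the same'' set of points on the torus, and extending $\mathbb{F}[U_1,\dots,U_n]$-linearly (after matching up the $U_i$ variables according to how the $O$-markings are permuted). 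Because rectangles and their marking-contents are preserved, $\phi$ intertwines $\partial^-$ on $g$ with $\partial^-$ on $g'$, so $\phi$ is a chain isomorphism of the graded complexes $CF^-(g)\cong CF^-(g')$.

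The second step is to check that $\phi$ respects the Maslov grading and the symmetrized Alexander filtration. For the Maslov grading, formula (\ref{mm}) is computed from a planar realization, but the relative Maslov grading $M(\mathbf{x})-M(\mathbf{y})$ is determined by rectangle counts via (\ref{m-m}), hence is preserved by $\phi$; one then observes that the absolute Maslov grading on a toroidal diagram is well-defined (it is pinned down, e.g., by the homological conventions in \cite{Heegaard_Floer_homology_of_spatial_graphs}), so $\phi$ is Maslov-graded. For the Alexander side, the relative Alexander grading $A^{rel}(\mathbf{x},\mathbf{y})$ is likewise determined by rectangle data via (\ref{a-a}) and is preserved by $\phi$; hence $\phi$ carries the filtration $\{\mathcal{F}^-_m(g)\}$ to a filtration of $CF^-(g')$ differing from $\{\mathcal{F}^-_m(g')\}$ at most by an overall shift. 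But the symmetrization in Definition \ref{symmetrizedA} is defined purely in terms of the homology of the associated graded pieces $\widehat{\mathcal{F}}_m/\widehat{\mathcal{F}}_{m-1}$, which $\phi$ identifies; therefore $m_{\max}$ and $m_{\min}$ agree, the normalization $m_{\max}=-m_{\min}$ removes the shift ambiguity, and $\phi$ becomes an isomorphism of symmetrized-Alexander-filtered complexes $CF^{-H}(g)\cong CF^{-H}(g')$. An isomorphism is in particular a filtered chain homotopy equivalence (with inverse $\phi^{-1}$ and zero homotopies), which is the claim.

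I would expect the only genuinely delicate point to be the bookkeeping around absolute gradings on the torus: formulas (\ref{mm}) and (\ref{aa}) are written for a planar realization, and a cyclic permutation changes which planar realization one reads off, so one must argue carefully that the absolute Maslov grading is genuinely well-defined on the toroidal diagram while the absolute Alexander grading is only well-defined after the symmetrization normalization. Once this is phrased correctly, everything else is a direct check that rectangle counts and marking-contents are move-invariant. Since this proposition concerns the known move (cyclic permutation), I would keep the argument brief and cite the analogous statements in \cite{grid-tau--Vance-spatial} and \cite{Heegaard_Floer_homology_of_spatial_graphs} for the parts that are verbatim the same as in the link case, reserving the detailed treatment for the stabilization$'$ move handled later in this section.
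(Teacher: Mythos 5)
Your proposal is correct and takes essentially the same approach as the paper: both construct the tautological bijection on states induced by the cyclic permutation, argue that it is a chain isomorphism respecting the Maslov grading and the symmetrized Alexander filtration (with the paper deferring the verification to Vance, Section 3.3 and Theorem 3.15), and conclude that an isomorphism is in particular a filtered chain homotopy equivalence. Your proposal simply fleshes out in more detail the filtration bookkeeping that the paper handles by citation.
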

\begin{proof}
There is a natural bijection $c\colon\mathbf{S}(g)\rightarrow\mathbf{S}(g')$. By \cite[Section 3.3 and Theorem 3.15]{grid-tau--Vance-spatial}, $c$ induces an isomorphism of Maslov graded, symmetrized Alexander filtered chain complexes $c\colon CF^{-H}(g)\rightarrow CF^{-H}(g')$.
As $c\colon\mathbf{S}(g)\rightarrow\mathbf{S}(g')$ is a bijection, there is an isomorphism $c^{-1}\colon CF^{-H}(g')\rightarrow CF^{-H}(g)$ induced by $c^{-1}$ such that $c^{-1}\circ c=\mathrm{id}$ and $c\circ c^{-1}=\mathrm{id}$.
\end{proof}

\subsection{commutation$'$}
\begin{prop}
\label{chainho2}
If $g$ and $g'$ are connected by a single commutation$'$, then as Maslov graded, Alexander filtered chain complexes, $CF^{-H}(g)$ and $CF^{-H}(g')$ are chain homotopy equivalent.
\end{prop}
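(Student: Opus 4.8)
The plan is to adapt the standard pentagon/hexagon counting argument for commutation invariance in grid homology (as in \cite{grid-book} and \cite{on-combinatorial-link-Floer-homology}) to the present setting with the symmetrized Alexander filtration. Since $g$ and $g'$ differ by a single commutation$'$ of two adjacent columns (the row case is identical after transposing), the vertical circles $\boldsymbol{\beta}$ and $\boldsymbol{\beta}'$ agree except for one circle; write $\beta$ for the circle in $g$ being replaced by $\beta'$ in $g'$. On the torus $\beta$ and $\beta'$ intersect in exactly two points, which divide an annular region between them into two ``bigon-like'' regions. There is a natural bijection between $\mathbf{S}(g)$ and $\mathbf{S}(g')$ away from these columns, but the right tool is the map counting pentagons.

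\emph{Key steps, in order.} First I would set up the two distinguished intersection points of $\beta\cap\beta'$ and define, for $\mathbf{x}\in\mathbf{S}(g)$ and $\mathbf{y}\in\mathbf{S}(g')$, the sets of empty pentagons $\mathrm{Pent}^\circ(\mathbf{x},\mathbf{y})$ (regions with five corners, one at a point of $\beta\cap\beta'$, with the usual emptiness condition) and define
\[
P(\mathbf{x})=\sum_{\mathbf{y}\in\mathbf{S}(g')}\left(\sum_{p\in\mathrm{Pent}^\circ(\mathbf{x},\mathbf{y})}U_1^{O_1(p)}\cdots U_n^{O_n(p)}\right)\mathbf{y},
\]
and symmetrically $P'\colon CF^{-H}(g')\to CF^{-H}(g)$ counting pentagons in the other direction. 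Second, I would check that $P$ is a Maslov graded chain map: this is the standard decomposition of the boundary of a juxtaposition of a pentagon and a rectangle into pairs, plus the observation that the relevant count is even; the Maslov grading bookkeeping is the same as in the knot case and is unaffected by the graph structure. Third — and this is the step that needs the balanced hypothesis and Vance's filtration machinery — I would check that $P$ respects the symmetrized Alexander filtration, i.e. does not increase $A^H$. For an individual pentagon $p$ from $\mathbf{x}$ to $\mathbf{y}$ the change $A^H(\mathbf{x})-A^H(\mathbf{y})$ is governed by $|\,\mathbb{X}\cap p\,|-\sum_{O_i\in\mathbb{O}\cap p}m_i$ exactly as in (\ref{a-a}) for rectangles (the same computation of $\mathcal{J}$ with one corner on $\beta\cap\beta'$), and since commutation$'$ moves only whole vertical segments $\mathrm{LS}_1\cup\mathrm{LS}_2$ carrying all the $X$'s and $O$'s of those two columns, the balanced condition guarantees the relative Alexander grading, and hence $m_{\max}$ and $m_{\min}$ of Definition \ref{symmetrizedA}, match up between $g$ and $g'$, so the symmetrization constants agree. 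Fourth, I would verify $P'\circ P\simeq \mathrm{id}$ and $P\circ P'\simeq\mathrm{id}$ by the usual hexagon count: $P'\circ P + \mathrm{id}$ (mod $2$) is the differential-commutator of a homotopy $H$ counting hexagons (domains with six corners, two on $\beta\cap\beta'$), where one uses that the two distinguished hexagons through a given generator form a cancelling pair — here again using that they contain the same $O$'s and, by balancedness, $X$'s contributing equally, so their filtration levels match.

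\emph{Main obstacle.} The routine part is genuinely routine once the knot-case pentagon/hexagon counts are in hand; the real content is confirming that the symmetrized (rather than naive) Alexander filtration is preserved. Concretely, the knot-case argument preserves the \emph{relative} Alexander grading automatically, but the \emph{absolute} symmetrized grading is pinned by the homological quantities $m_{\max}(g),m_{\min}(g)$, and one must argue these are intrinsic enough that the chain isomorphism on the associated-graded level (which commutation$'$ already induces by \cite[Section 3.3, Theorem 3.15]{grid-tau--Vance-spatial}) forces the symmetrization shift to be the same on both sides; granting that, the pentagon map is filtered of degree $0$ and $P,P'$ together with the hexagon homotopy exhibit the desired filtered chain homotopy equivalence. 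Finally, invoking Proposition \ref{ct=ct}, this filtered chain homotopy equivalence lifts to a graded chain homotopy equivalence $tCF^{-H}(g)\simeq tCF^{-H}(g')$, though that step belongs to the next section rather than to this proposition.
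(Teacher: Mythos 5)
Your proposal is correct and takes essentially the same route as the paper: the paper's proof is a two-sentence citation to Vance's Lemma 3.21 and MOST's Proposition 3.2 for the pentagon chain homotopy equivalences $p'_{\beta\gamma}$, $p'_{\gamma\beta}$, and to Vance's Theorem 3.15 for preservation of the symmetrized Alexander filtration, while you reconstruct exactly that pentagon/hexagon argument and correctly isolate the symmetrization step as the only content beyond the knot case.
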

\begin{proof}
By \cite[Lemma 3.21]{grid-tau--Vance-spatial} and \cite[Proposition3.2]{on-combinatorial-link-Floer-homology}, there are chain homotopic equivalences $p'_{\beta\gamma}\colon CF^{-}(g)\rightarrow CF^{-}(g')$ and $p'_{\gamma\beta}\colon CF^{-}(g')\rightarrow CF^{-}(g)$. Since \cite[Theorem 3.15]{grid-tau--Vance-spatial}, they preserve the symmetrized Alexander filtration.
\end{proof}

\subsection{stabilization$'$}
\label{sub:sta}
Assume that $g'$ is obtained from $g$ by a single stabilization$'$ and that $CF^{-H}(g)$ is a chain complex over $\mathbb{F}[U_2,\dots,U_n]$ and $CF^{-H}(g')$ is a chain complex over $\mathbb{F}[U_1,\dots,U_n]$.
Number the $O$-markings of $g'$ so that $O_1$ is the new one and $O_2$ is the $O$-marking in the row just below $O_1$.
We will also assume that $X_1$ lies in the same row as $O_1$ and $X_2$ is the $X$-marking just below $O_1$.
We denote by $c$ the intersection point of the new horizontal and vertical circles in $g'$ (Figure \ref{fig:sta2}).
Note that there may be more $X$-markings in the row that includes $O_2$ (in which case $O_2$ is an $O^*$-marking).

\begin{figure}
\centering
\includegraphics[scale=0.7]{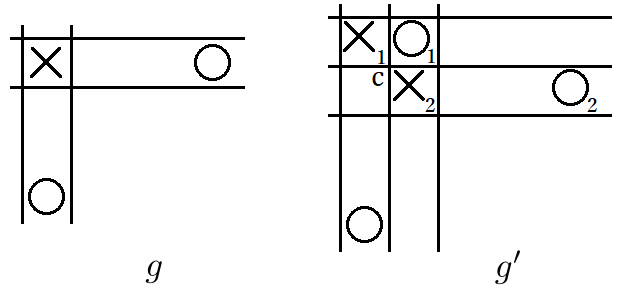}
\caption{labeling of stabilized graph grid diagram}
\label{fig:sta2}
\end{figure}

For a Maslov graded, Alexander filtered chain complex $X$, let $X'=X\llbracket a,b\rrbracket$ denote the shifted complex such that $\mathcal{F}_sX'_d=\mathcal{F}_{s+b}X_{d+a}$, where $\mathcal{F}_{s}X_{d}$ is the submodule of $X$ whose grading is $d$ and filtration level is $s$.

Considering the point $c$, we will decompose the set of states $\mathbf{S}(g')$ as the disjoint union $\mathbf{I}(g')\cup\mathbf{N}(g')$, where $\mathbf{I}(g')=\{\mathbf{x\in S}(g')|c\in\mathbf{x}\}$ and $\mathbf{N}(g')=\{\mathbf{x\in S}(g')|c\notin\mathbf{x}\}$.
This decomposition gives a decomposition of $tCF^-(g')=I\oplus N$ as an $\mathcal{R}$-module, where $I$ and $N$ are the spans of $\mathbf{I}(g')$ and $\mathbf{N}(g')$ respectively.

There is a natural bijection between $\mathbf{I}(g')$ and $\mathbf{S}(g)$, given by
\[
e\colon\mathbf{I}(g')\to\mathbf{S}(g),\ \mathbf{x}\cup\{c\}\mapsto\mathbf{x}.
\]

Let $(CF^{-H}(g)[U_1],\partial)$ be a chain complex defined by $CF^{-H}(g)[U_1]=CF^{-H}(g)\otimes_{\mathbb{F}[U_2,\dots,U_n]}\mathbb{F}[U_1,\dots,U_n]$ with the differential $\partial=\partial^-\otimes\mathrm{id}$.
Consider a chain complex $\mathrm{Cone}(U_1-U_2\colon CF^{-H}(g)[U_1]\to CF^{-H}(g)[U_1])$.

For the stabilization$'$ invariance, we will prove the following proposition.
\begin{prop}
\label{chainho3}
$CF^{-H}(g')$ and $\mathrm{Cone}(U_1-U_2\colon CF^{-H}(g)[U_1]\to CF^{-H}(g)[U_1])$ are chain homotopy equivalent complexes.
\end{prop}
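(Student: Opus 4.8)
The plan is to follow the standard grid-homology stabilization argument of Manolescu--Ozsv\'{a}th--Szab\'{o}--Thurston, adapted to the graph setting as in Vance's treatment of $\tau^{spatial}$, and to keep careful track of the symmetrized Alexander filtration throughout. First I would unpack the differential on $CF^{-H}(g')$ relative to the splitting $CF^{-H}(g') = I \oplus N$ coming from the point $c$. Writing $\partial'$ in block form, one gets a map $\partial'_I\colon I\to I$, $\partial'_N\colon N\to N$, and off-diagonal terms $\partial'_{NI}\colon I\to N$ and $\partial'_{IN}\colon N\to I$; the key combinatorial observation is that empty rectangles in $g'$ that do not touch the new row/column containing $c$ are in bijection with empty rectangles in $g$, while rectangles with a corner at $c$ are governed by the geometry near the stabilization. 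Via the bijection $e\colon \mathbf{I}(g')\to\mathbf{S}(g)$ one identifies $(I,\partial'_I)$ with $CF^{-H}(g)[U_1]$ up to the appropriate grading/filtration shift $\llbracket a,b\rrbracket$, and one identifies the relevant part of $\partial'_{IN}$ or $\partial'_{NI}$ composed with the ``trivial'' part of $\partial'_N$ with multiplication by $U_1 - U_2$ (the two rectangles responsible being the width-one vertical strip through $O_1$, contributing $U_1$, and the one through $X_2$/$O_2$, contributing $U_2$, after passing to $CF^{-H}_U$ — here on the chain level over $\mathbb{F}[U_1,\dots,U_n]$ it is $U_1$ versus $U_2$). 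This realizes $CF^{-H}(g')$ as an iterated mapping cone quasi-isomorphic to $\mathrm{Cone}(U_1-U_2\colon CF^{-H}(g)[U_1]\to CF^{-H}(g)[U_1])$.

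Concretely, I would mimic the ``destabilization'' homotopy: decompose $N = N' \oplus N''$ further, where $N'$ consists of states whose distinguished point sits in a specific spot forced by $c$ being absent, and exhibit an acyclic summand (a mapping cone of an isomorphism) that can be cancelled using the homological perturbation / Gaussian elimination lemma for filtered complexes. The surviving complex after this cancellation is exactly the stated cone. The maps realizing the chain homotopy equivalence are built, as usual, by counting domains near the stabilization region (pentagons and hexagons in the classical argument, or equivalently the explicit $e$-based maps $F,G$ and homotopies $H$ of MOST); I would cite \cite{grid-tau--Vance-spatial} and \cite{on-combinatorial-link-Floer-homology} for the fact that these maps respect the Maslov grading and the (symmetrized) Alexander filtration, invoking \cite[Theorem 3.15]{grid-tau--Vance-spatial} exactly as in the proofs of Propositions \ref{chainho1} and \ref{chainho2}.

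The main obstacle I anticipate is bookkeeping for the Alexander filtration rather than anything conceptually deep: the unsymmetrized Alexander grading is only relative on the torus and depends on the diagram, and the stabilization introduces a new variable $U_1$ whose Alexander drop $m_1$ must be reconciled with that of $U_2$, i.e. $m_2$ (note $m_1 = 1$ while $m_2$ may be larger when $O_2$ is an $O^*$-marking). I would handle this by first doing everything with the plain Alexander filtration and an explicit shift $\llbracket a, b\rrbracket$ computed from \eqref{mm}, \eqref{aa}, \eqref{uu}, then arguing that after passing to the symmetrized filtration $\{\widehat{\mathcal{F}}_m\}$ the normalization $m_{\mathrm{max}} = -m_{\mathrm{min}}$ is preserved because the homology of the associated graded is a spatial-graph invariant by \cite[Theorem 3.15]{grid-tau--Vance-spatial}, forcing the shift to be the ``correct'' one. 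A secondary technical point is checking that $U_1 - U_2$ is genuinely a filtered chain map $CF^{-H}(g)[U_1]\to CF^{-H}(g)[U_1]$ of the right degree; this is where the balanced hypothesis on $f$ is used, exactly as it was used in the proof of Proposition \ref{t-ch1} to make thin annuli cancel in pairs. Once the filtered statement is in hand, Proposition \ref{ct=ct} upgrades it to the $t$-modified setting for free, which is all that Section 6 will require.
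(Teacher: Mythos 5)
Your proposal takes essentially the same route as the paper, following the grid-book filtered stabilization argument ($\S\S 13.3.2$, $13.4.2$) adapted to the graph setting: decompose $\mathbf{S}(g')$ via the point $c$ into $\mathbf{I}(g')\sqcup\mathbf{N}(g')$, identify the $I$-part with $CF^{-H}(g)[U_1]$ via $e$, recognize the relevant component of the differential as $U_1-U_2$, and construct domain-counting chain homotopy equivalences. The paper does exactly this, defining explicit maps $\mathcal{D}$ (types $iL,iR$), $\mathcal{S}$ (types $oL,oR$), and a homotopy $\mathcal{K}$ (types $K1,K2,K3$), then verifying $\mathcal{D}\circ\mathcal{S}=\mathrm{id}$ and $\mathcal{S}\circ\mathcal{D}+\partial'\mathcal{K}+\mathcal{K}\partial'=\mathrm{id}$ by pairing domains.

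Where you deviate is in the framing: you propose to exhibit an acyclic summand of $N$ and cancel it via filtered Gaussian elimination / homological perturbation, rather than building $\mathcal{D},\mathcal{S},\mathcal{K}$ directly. That route is legitimate (cancelling a filtration-degree-zero isomorphism preserves the filtered chain homotopy type, and the surviving complex is indeed the stated cone), and it has the advantage of producing the homotopy $\mathcal{K}$ automatically rather than guessing its defining domain types. However, the cost is that the resulting maps are sums over zig-zag paths through cancelled states, and verifying they land in $\pi^{iL},\pi^{iR},\pi^{oL},\pi^{oR}$ — hence are filtered — still requires the same domain bookkeeping you allude to. Two imprecisions worth flagging: (1) you reference ``pentagons and hexagons,'' which belong to the \emph{graded} stabilization argument (grid book Chapter 5), not the filtered one; the filtered version needs the $iL/iR/oL/oR$ (and here $K1/K2/K3$) domain classes precisely because they allow arbitrary $\mathbb{X}$-multiplicities. (2) The balanced hypothesis is not really where the filtration issue for $U_1-U_2$ lives; that is absorbed by the bigrading shift $\llbracket1,1\rrbracket$ on one copy of $CF^{-H}(g)[U_1]$ together with $m_1=1\leq m_2$. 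The balanced hypothesis enters earlier, to make the thin annuli cancel so that $\partial^-$ is a differential at all. Your observation that $m_2$ may exceed $1$ when $O_2$ is an $O^*$-marking is correct and is indeed the point requiring care.
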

This can be proved in the same way as the proof of stabilization invariance of grid homology for knots and links written in \cite[Sections 13.3.2 and 13.4.2]{grid-book}.
Because we are working on Alexander filtered version, the domains which appear in the chain maps can contain any $X$-markings of $\mathbb{X}$, in other words, they are independent of the $X$-markings.
Thus we can use the same domains as in \cite[Definitions 13.3.7 and 13.4.2]{grid-book} to show this Proposition.
We construct chain homotopy equivalence using the filtered destabilization map $\mathcal{D}$ (\cite[Definition 13.3.10]{grid-book}), the filtered stabilization map $\mathcal{S}$ (\cite[Definition 13.4.3]{grid-book}) and chain homotopy $\mathcal{K}$ (Definition \ref{dfn:k}).
Note that these maps are the answer to \cite[Exercise 13.4.4]{grid-book}.

\begin{dfn}[{\cite[Definition 13.3.7]{grid-book}}]
For $\mathbf{x\in S}(g')$ and $\mathbf{y\in I}(g')$, a positive domain $p\in\pi(\mathbf{x,y})$ is said to be of type $\bm{iL}$ if it is trivial or it satisfies the following conditions$\colon$
\begin{itemize}
\item At each corner point in $\mathbf{x\cup y}\setminus\{c\}$, at least three of the four adjoining squares have the local multiplicity zero.
\item Three of four squares attaching at the corner point $c$ have the local multiplicity $k$ and at the southwest square meeting $c$ the local multiplicity is $k-1$.
\item $\mathbf{y}$ has $2k+1$ components that are not in $\mathbf{x}$.
\end{itemize}
And, a positive domain $p\in\pi(\mathbf{x,y})$ is said to be of type $\bm{iR}$ if it is trivial or it satisfies the following conditions$\colon$
\begin{itemize}
\item At each corner point in $\mathbf{x\cup y}\setminus\{c\}$, at least three of the four adjoining squares have the local multiplicity zero.
\item Three of four squares attaching at the corner point $c$ have the local multiplicity $k$ and at the southeast square meeting $c$ the local multiplicity is $k+1$.
\item $\mathbf{y}$ has $2k+2$ components that are not in $\mathbf{x}$.
\end{itemize}
The set of domains of type $iL$ (respectively type $iR$) from $\mathbf{x}$ to $\mathbf{y}$ is denoted $\pi^{iL}(\mathbf{x,y})$ (respectively $\pi^{iR}(\mathbf{x,y})$).
\end{dfn}

\begin{figure}
\centering
\includegraphics[scale=0.48]{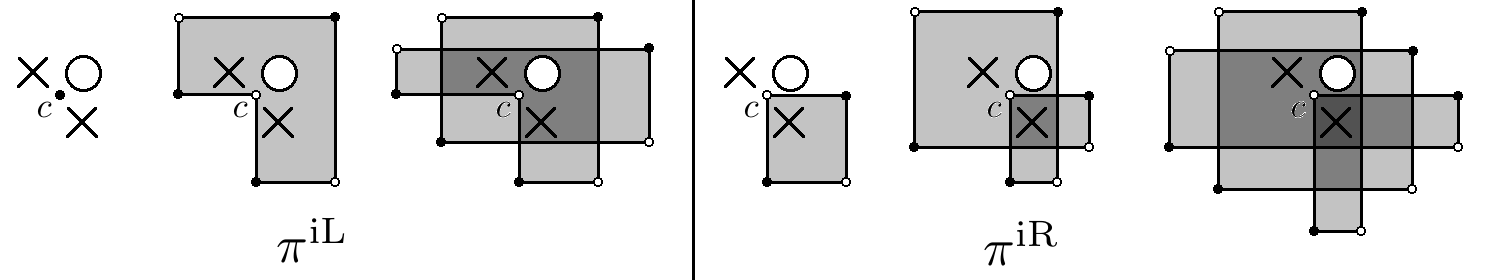}
\caption{Examples of $\pi^{iL}(\mathbf{x,y})$ and $\pi^{iR}(\mathbf{x,y})$}
\label{fig:ilir}
\includegraphics[scale=0.48]{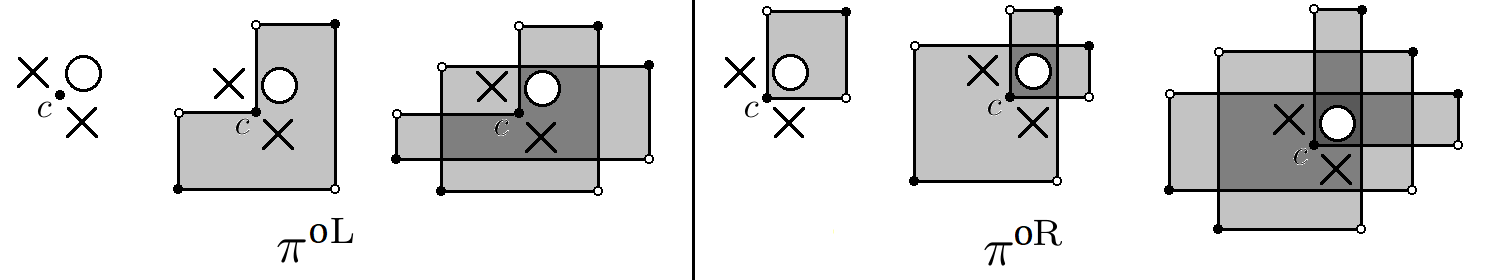}
\caption{Examples of $\pi^{oL}(\mathbf{x,y})$ and $\pi^{oR}(\mathbf{x,y})$}
\label{fig:olor}
\end{figure}

\begin{dfn}[{\cite[Definition 13.4.2]{grid-book}}]
For $\mathbf{x\in S}(g')$ and $\mathbf{y\in I}(g')$, a positive domain $p\in\pi(\mathbf{x,y})$ is said to be of type $\bm{oL}$ if it is trivial or it satisfies the following conditions$\colon$
\begin{itemize}
\item At each corner point in $\mathbf{x\cup y}\setminus\{c\}$, at least three of the four adjoining squares have local multiplicity zero.
\item Three of four squares attaching at the corner point $c$ have the local multiplicity $k$ and at the northwest square meeting $c$ the local multiplicity is $k-1$.
\item $\mathbf{y}$ has $2k+1$ components that are not in $\mathbf{x}$.
\end{itemize}
And, a positive domain $p\in\pi(\mathbf{x,y})$ is said to be of type $\bm{oR}$ if it is trivial or it satisfies the following conditions$\colon$
\begin{itemize}
\item At each corner point in $\mathbf{x\cup y}\setminus\{c\}$, at least three of the four adjoining squares have local multiplicity zero.
\item Three of four squares attaching at the corner point $c$ have the local multiplicity $k$ and at the northeast square meeting $c$ the local multiplicity is $k+1$.
\item $\mathbf{y}$ has $2k+2$ components that are not in $\mathbf{x}$.
\end{itemize}
The set of domains of type $oL$ and $oR$ from $\mathbf{x}$ to $\mathbf{y}$ is denoted $\pi^{oL}(\mathbf{x,y})$ and $\pi^{oR}(\mathbf{x,y})$ respectively.
\end{dfn}

\begin{figure}
\centering
\includegraphics[scale=0.5]{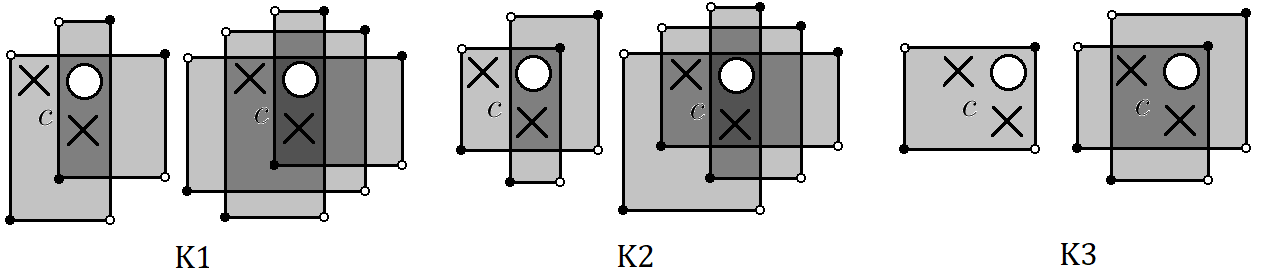}
\caption{Examples of $\pi^{K}$ }
\label{fig:typek}
\end{figure}

Let $\beta_{i}$ be the vertical circle containing the point $c$.

\begin{dfn}
For $\mathbf{x,y\in N}(g')$, a positive domain $p\in\pi(\mathbf{x,y})$ is said to be of type $\bm{K1}$ if it  satisfies the following conditions$\colon$
\begin{itemize}
\item $\#\{\mathbf{(x\cup y)\setminus (x\cap y)}\}\in4\mathbb{N}$.
\item Only one point of $\mathbf{(x\cup y)\setminus (x\cap y)}$ on $\beta_j$ is contained in the interior of $p$, and at the other points, three of the four adjoining squares have local multiplicity zero.
\item $p$ satisfies that $O_1(p)=X_2(p)=X_1(p)+1$, where $X_i(p)$ is the coefficient of $p$ at the square containing $X_i$ $(i=1,2)$.
\end{itemize}

A positive domain $p\in\pi(\mathbf{x,y})$ is said to be of type $\bm{K2}$ if it  satisfies the following conditions$\colon$
\begin{itemize}
\item $\#\{\mathbf{(x\cup y)\setminus (x\cap y)}\}\in4\mathbb{N}$.
\item Let $m=\#\{\mathbf{x\cup y\setminus (x\cap y)}\}$. Then, $(m-4)/4$ points of $\mathbf{(x\cup y)\setminus (x\cap y)}$ are contained in the interior of $p$, and at the other points three of the four adjoining squares have local multiplicity zero.
\item $p$ satisfies that $O_1(p)=X_2(p)=X_1(p)+1$
\end{itemize}

A positive domain $p\in\pi(\mathbf{x,y})$ is said to be of type $\bm{K3}$ if it  satisfies the following conditions$\colon$
\begin{itemize}
\item $\#\{\mathbf{(x\cup y)\setminus (x\cap y)}\}\in4\mathbb{N}$.
\item Let $m=\#\{\mathbf{x\cup y\setminus (x\cap y)}\}$. Then, $(m-4)/4$ points of $\mathbf{(x\cup y)\setminus (x\cap y)}$ are contained in the interior of $p$, and at the other points three of the four adjoining squares have local multiplicity zero.
\item One of the vertical segments of $\partial(p)$ is on the vertical circle $\beta_{j+1}$ and it goes up.
\item $p$ satisfies that $O_1(p)=X_2(p)=X_1(p)$
\end{itemize}

The set of domains of type $K1$, $K2$, and $K3$ from $\mathbf{x}$ to $\mathbf{y}$ is denoted $\pi^{K}(\mathbf{x,y})$ .
\end{dfn}
Note that however there are some domains that are both type $K1$ and type $K2$, the proof holds.

\begin{dfn}
\label{dfn:d}
The $\mathcal{R}$-module homomorphisms $\mathcal{D}^{iL}\colon CF^{-H}(g')\to CF^{-H}(g)[U_1]\llbracket1,1\rrbracket$ and $\mathcal{D}^{iR}\colon CF^{-H}(g')\to CF^{-H}(g)[U_1]$ are given by
\begin{align}
\mathcal{D}^{iL}(\mathbf{x})=\sum_{\mathbf{y\in I}(g')}\left(\sum_{p\in\pi^{iL}(\mathbf{x,y})}U_2^{O_2(p)}U_3^{O_3(p)}\dots U_n^{O_n(p)} \right)\cdot e(\mathbf{y}),\\
\mathcal{D}^{iR}(\mathbf{x})=\sum_{\mathbf{y\in I}(g')}\left(\sum_{p\in\pi^{iR}(\mathbf{x,y})}U_2^{O_2(p)}U_3^{O_3(p)}\dots U_n^{O_n(p)} \right)\cdot e(\mathbf{y}).
\end{align}
Then, $\mathcal{D}\colon CF^{-H}(g')\to\mathrm{Cone}(U_1-U_2)$ is defined by
\[
\mathcal{D}(\mathbf{x})=(\mathcal{D}^{iL}(\mathbf{x}),\mathcal{D}^{iR}(\mathbf{x})).
\]
\end{dfn}

\begin{dfn}
\label{dfn:s}
The $\mathcal{R}$-module homomorphisms $\mathcal{S}^{oL}\colon CF^{-H}(g)[U_1]\llbracket1,1\rrbracket \to CF^{-H}(g')$ and $\mathcal{S}^{oR}\colon CF^{-H}(g)[U_1]\to CF^{-H}(g')$ are given by
\begin{align}
\mathcal{S}^{oL}(\mathbf{x})=\sum_{\mathbf{y\in S}(g')}\left(\sum_{p\in\pi^{oL}(\mathbf{e^{-1}(x),y})}U_2^{O_2(p)}U_3^{O_3(p)}\dots U_n^{O_n(p)} \right)\cdot \mathbf{y},\\
\mathcal{S}^{oR}(\mathbf{x})=\sum_{\mathbf{y\in S}(g')}\left(\sum_{p\in\pi^{oR}(\mathbf{e^{-1}(x),y})}U_2^{O_2(p)}U_3^{O_3(p)}\dots U_n^{O_n(p)} \right)\cdot \mathbf{y}.
\end{align}
Then, $\mathcal{S}\colon \mathrm{Cone}(U_1-U_2)\to CF^{-H}(g')$ is defined by
\[
\mathcal{S}(\mathbf{x})=(\mathcal{S}^{oL}(\mathbf{x}),\mathcal{S}^{oR}(\mathbf{x})).
\]
\end{dfn}

Let $\partial'$ denote the differential of $CF^{-H}(g')$.
\begin{prop}
The maps $\mathcal{D},\mathcal{S}$ are chain maps.
\end{prop}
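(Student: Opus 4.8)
The plan is to check directly that $\mathcal{D}$ and $\mathcal{S}$ commute with the differentials, adapting the scheme of \cite[Sections 13.3.2 and 13.4.2]{grid-book} to the present setting. Working modulo $2$ and ignoring the grading shift $\llbracket1,1\rrbracket$ (which only makes the maps homogeneous), write the mapping cone differential as $(a,b)\mapsto(\partial a,\ \partial b+(U_1-U_2)a)$. Then the identity $\partial_{\mathrm{Cone}}\circ\mathcal{D}=\mathcal{D}\circ\partial'$ is equivalent to the pair of equations
\[
\partial\circ\mathcal{D}^{iL}=\mathcal{D}^{iL}\circ\partial',\qquad \partial\circ\mathcal{D}^{iR}+(U_1-U_2)\,\mathcal{D}^{iL}=\mathcal{D}^{iR}\circ\partial',
\]
and, using that $\mathcal{S}^{oR}$ is $\mathcal{R}$-linear, $\partial'\circ\mathcal{S}=\mathcal{S}\circ\partial_{\mathrm{Cone}}$ is equivalent to
\[
\partial'\circ\mathcal{S}^{oL}=\mathcal{S}^{oL}\circ\partial+(U_1-U_2)\,\mathcal{S}^{oR},\qquad \partial'\circ\mathcal{S}^{oR}=\mathcal{S}^{oR}\circ\partial.
\]

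Each of these is proved by the standard count of composite positive domains. For instance, applied to a state $\mathbf{x}$ and read off in the coefficient of $e(\mathbf{y})$ with $\mathbf{y}\in\mathbf{I}(g')$, the map $\partial\circ\mathcal{D}^{iL}+\mathcal{D}^{iL}\circ\partial'$ counts, with weight $U_2^{O_2(p)}\cdots U_n^{O_n(p)}$, the domains $p$ together with a choice of decomposition $p=q*r$ ($q$ of type $iL$, $r$ an empty rectangle in $g$) or $p=r'*q$ ($r'$ an empty rectangle in $g'$, $q$ of type $iL$); the weight of $p$ is the same for every decomposition because $O_i(\cdot)$ is additive under juxtaposition. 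Hence it suffices to show that for each such $p$ the total number of admissible decompositions is even. This rests on a classification of the positive domains admitting such decompositions: apart from an explicit short list they admit exactly two, which cancel mod $2$, and the domains left over in the $\mathcal{D}^{iR}$ (resp.\ $\mathcal{S}^{oL}$) computation are precisely those producing $(U_1-U_2)\,\mathcal{D}^{iL}$ (resp.\ $(U_1-U_2)\,\mathcal{S}^{oR}$), the $U_1$ and $U_2$ arising from the two distinct decompositions of a distinguished short domain, one passing over $O_1$ and one over $O_2$. Since the weights in $\mathcal{D},\mathcal{S}$ involve no $X_j$, this classification is carried out exactly as in the knot and link case in \cite[Sections 13.3.2 and 13.4.2]{grid-book}.

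What genuinely needs attention beyond citing the book is that the new features of the balanced spatial graph setting do not disturb this analysis. First, $O_2$ may be an $O^*$-marking and its row may carry extra $X$-markings, but the defining conditions for the types $iL,iR,oL,oR$ (and likewise $K1,K2,K3$) constrain $p$ only through local multiplicities at the points of $\mathbf{x}\cup\mathbf{y}$ and at the corner $c$, and through the prescribed coefficients at $O_1,X_1,X_2$, so the list of admissible domains and their decompositions is unaffected by $X$-markings elsewhere in that row. Second, one must check that $\mathcal{D}$ and $\mathcal{S}$ respect the symmetrized Alexander filtration; this follows from (\ref{a-a}) together with the fact that the domains entering these maps carry no $X$-weight, exactly as in the filtered (de)stabilization discussion preceding Definition \ref{dfn:d}. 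I expect the bookkeeping at the corner $c$ --- separating the domains with local multiplicity $k=0$ there, which produce the identity-like and the $U_1-U_2$ terms, from those with $k\ge1$, which pair up --- to be the most delicate point; but it is a finite case analysis identical in structure to the knot case and requires no new idea.
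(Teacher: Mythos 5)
Your outline is correct and would lead to a valid proof, but it takes a genuinely different route from the paper. You propose to verify both $\partial_{\mathrm{Cone}}\circ\mathcal{D}=\mathcal{D}\circ\partial'$ and $\partial'\circ\mathcal{S}=\mathcal{S}\circ\partial_{\mathrm{Cone}}$ by a direct classification of composite positive domains and mod-$2$ cancellation, separately for the four component maps $\mathcal{D}^{iL},\mathcal{D}^{iR},\mathcal{S}^{oL},\mathcal{S}^{oR}$. The paper instead proves only the $\mathcal{D}$ half by the domain count (citing \cite[Lemma 13.3.13]{grid-book}), and then deduces the $\mathcal{S}$ half from it by a symmetry: the $180^\circ$ rotation $\theta$ of the horizontal circle through $c$ induces a bijection $\pi(\mathbf{x},\mathbf{y})\to\pi(\theta(\mathbf{y}),\theta(\mathbf{x}))$ that matches a composite $d*s$ contributing to $\mathcal{D}\circ\partial'+\partial_{\mathrm{Cone}}\circ\mathcal{D}$ with a composite $s'*d'$ contributing to $\mathcal{S}\circ\partial_{\mathrm{Cone}}+\partial'\circ\mathcal{S}$, so the pairing already established for $\mathcal{D}$ transports to $\mathcal{S}$ at no extra cost. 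Your direct approach is sound but does work the paper avoids; note also that the book does not actually carry out the $oL/oR$ classification for $\mathcal{S}$ (it is left as \cite[Exercise~13.4.4]{grid-book}, as the paper points out), so that part of your case analysis would need to be written out rather than cited as you suggest. Your observations that the type conditions depend only on local multiplicities at points of $\mathbf{x}\cup\mathbf{y}$ and at $c$ and on coefficients at $O_1,X_1,X_2$, and that the maps carry no $X$-weight and hence respect the filtration, are correct and match the paper's general discussion preceding Definition~\ref{dfn:d}, though they are not part of the proof of this particular proposition.
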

\begin{proof}
From \cite[Lemma 13.3.13]{grid-book}, $\mathcal{D}$ is a chain map by pairing domains of $\mathcal{D}\circ\partial'+\partial_{Cone}\circ\mathcal{D}$.

Consider the bijection $\theta\colon\mathbf{S}(g')\to\mathbf{S}(g')$ determined by the $180^\circ$ rotation of the horizontal circle containing $c$.
Obviously $\theta$ induces a natural bijection $\theta(\mathbf{x,y})\colon\pi\mathbf({x,y})\to\pi(\mathbf{\theta(y),\theta(x)})$ for each $\mathbf{x,y\in S}(g')$.
Then a composite domain $d*s\ (d\in\pi(\mathbf{x,y}),\ s\in\pi(\mathbf{y,z}))$ appears in $\mathcal{D}\circ\partial'+\partial_{Cone}\circ\mathcal{D}$ if and only if $s'*d'\ (s'\in\pi(\mathbf{\theta(z),\theta(y)}),\ d'\in\pi(\mathbf{\theta(y),\theta(x)}))$ appears in $\mathcal{S}\circ\partial_{Cone}+\partial'\circ\mathcal{S}$.
Therefore we can also pair all domains of $\mathcal{S}\circ\partial_{Cone}+\partial'\circ\mathcal{S}$.
\end{proof}

\begin{dfn}
\label{dfn:k}
The $\mathcal{R}$-module homomorphisms $\mathcal{K}\colon CF^{-H}(g')\to CF^{-H}(g')$ is given by
\begin{align}
\mathcal{K}(\mathbf{x})=\sum_{\mathbf{y\in S}(g')}\left(\sum_{p\in\pi^{K}(\mathbf{x,y})}U_2^{O_2(p)}U_3^{O_3(p)}\dots U_n^{O_n(p)} \right)\cdot \mathbf{y}.
\end{align}
\end{dfn}

\begin{dfn}
The \textbf{complexity} of a domain is one if it is the trivial domain and, otherwise, is the number of horizontal segments in its boundary.
Let us denote by $k(p)$ the complexity of $p$.
\end{dfn}

\begin{prop}
The above homomorphisms satisfy that
\begin{equation}
\mathcal{D}\circ\mathcal{S}=\mathrm{id},\label{ds1} \\
\end{equation}
\begin{equation}
\mathcal{S}\circ\mathcal{D}+\partial'\circ\mathcal{K}+\mathcal{K}\circ\partial'=\mathrm{id}.\label{ds2}
\end{equation}
\end{prop}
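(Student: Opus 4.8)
The plan is to adapt to the Alexander-filtered setting the proof of stabilization invariance for link grid homology in \cite[Sections 13.3.2 and 13.4.2]{grid-book}, together with the answer to \cite[Exercise 13.4.4]{grid-book}. The structural point that makes this transfer work --- already invoked to justify Proposition \ref{chainho3} --- is that $\mathcal{D}$, $\mathcal{S}$ and $\mathcal{K}$ count positive domains whose only recorded weights are the multiplicities at $O_2,\dots,O_n$, and whose defining conditions (types $iL$, $iR$, $oL$, $oR$, $K1$, $K2$, $K3$) constrain only the corner points of the participating states, the distinguished point $c$, and the markings $O_1$, $X_1$, $X_2$ of the stabilization region, while imposing nothing at the other $X$- and $O^*$-markings. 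Hence every combinatorial statement of \cite{grid-book} about composites of domains across a stabilization applies verbatim here, and in particular one never has to know whether $O_2$ is an ordinary $O$-marking or an $O^*$-marking. The type-$K$ family is larger than the one of \cite{grid-book} --- splitting into $K1$, $K2$, $K3$ --- precisely because in the filtered theory a domain may absorb $X_1$ and $X_2$ with arbitrary multiplicity.

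For $\mathcal{D}\circ\mathcal{S}=\mathrm{id}$, I would expand the composite on a generator as a sum, over composite domains $\psi=s*d$ with $s$ of type $oL$ or $oR$ and $d$ of type $iL$ or $iR$, of $\prod_{i=2}^{n}U_i^{O_i(\psi)}$ times the appropriate generator of $\mathrm{Cone}(U_1-U_2)$, the powers of $U_1$ being bookkept by which summand of the cone one ends up in; this presents $\mathcal{D}\circ\mathcal{S}$ as a self-map of $\mathrm{Cone}(U_1-U_2)$ assembled from $\mathcal{D}^{iR}\circ\mathcal{S}^{oR}$, $\mathcal{D}^{iL}\circ\mathcal{S}^{oL}$, $\mathcal{D}^{iL}\circ\mathcal{S}^{oR}$ and $\mathcal{D}^{iR}\circ\mathcal{S}^{oL}$. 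The trivial composite contributes the identity. For every other composite domain appearing, one shows --- exactly as in \cite[Lemma 13.3.13]{grid-book} and the analogous statement on the stabilization side --- that it admits precisely one further decomposition of the relevant form, so that all remaining contributions cancel in pairs modulo $2$ and only the identity survives.

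For $\mathcal{S}\circ\mathcal{D}+\partial'\circ\mathcal{K}+\mathcal{K}\circ\partial'=\mathrm{id}$, I would fix $\mathbf{x},\mathbf{z}\in\mathbf{S}(g')$ and collect every composite domain from $\mathbf{x}$ to $\mathbf{z}$ contributing to the left-hand side: composites $s*d$ with $s$ of type $oL$ or $oR$ and $d$ of type $iL$ or $iR$ from $\mathcal{S}\circ\mathcal{D}$, composites $r*p$ with $r$ an empty rectangle and $p$ of type $K$ from $\partial'\circ\mathcal{K}$, and composites $p*r$ with $p$ of type $K$ and $r$ an empty rectangle from $\mathcal{K}\circ\partial'$, each weighted by a monomial in $U_2,\dots,U_n$. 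The sole appearance of the trivial domain is the trivial$*$trivial composite in $\mathcal{S}\circ\mathcal{D}$, which yields the identity. For every other domain $\psi$ that occurs I would show, by the case analysis of \cite[Section 13.4.2]{grid-book} organized by the complexity $k(\psi)$, that $\psi$ has an even number of such decompositions, so that all remaining terms cancel modulo $2$. Concretely, the subtypes $K1$, $K2$, $K3$ are set up so that the pairing of destabilize-then-stabilize decompositions of $\psi$ with its rectangle-then-$K$ and $K$-then-rectangle decompositions is a fixed-point-free involution; the few domains lying in two subtypes simultaneously, noted after the definition of type $K$, do not affect the count.

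The main obstacle is the second identity: one must check that, for each contributing $\psi$, the two decompositions produced by the pairing are genuinely distinct and that together they exhaust all decompositions of $\psi$. This requires attention at the boundary cases --- domains of minimal complexity, domains that are at once of type $K1$ and of type $K2$, composites whose intermediate state lies in $\mathbf{I}(g')$ rather than in $\mathbf{N}(g')$, and the interaction of the cone differential $U_1-U_2$ with the bookkeeping of $U$-powers --- and it is precisely here that the multiplicity conditions $O_1(p)=X_2(p)=X_1(p)+1$ for types $K1$, $K2$ and $O_1(p)=X_2(p)=X_1(p)$ for type $K3$ are needed. Once these cases are laid out as in \cite[Section 13.4.2]{grid-book}, the remaining verification is a routine (if lengthy) check.
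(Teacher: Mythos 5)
Your overall plan matches the paper: adapt the stabilization argument of \cite[Sections 13.3.2, 13.4.2]{grid-book}, noting that the filtered (rather than associated-graded) setting makes the counting of domains independent of the $X$-markings, so the combinatorics transfers verbatim. The case analysis you sketch for the second identity, organized by complexity and using the three subtypes $K1$, $K2$, $K3$ to produce a fixed-point-free involution pairing $\mathcal{S}\circ\mathcal{D}$-decompositions with $\partial'\circ\mathcal{K}+\mathcal{K}\circ\partial'$-decompositions, is exactly the structure of the paper's proof of (\ref{ds2}) (trivial case; $k=1$ case matched to type $K1$ by cutting a small square along $\beta_j$; $270^\circ$-corner case matched to type $K2$; vertical thin annulus through $O_1$ matched to type $K3$; residual $\mathcal{K}$-compositions paired among themselves according to $m\in\{0,1,3\}$).

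The one place you misdescribe the paper is the first identity. You say that for $\mathcal{D}\circ\mathcal{S}=\mathrm{id}$ the non-trivial composite domains ``admit precisely one further decomposition'' and so ``cancel in pairs modulo $2$,'' and you cite \cite[Lemma 13.3.13]{grid-book} (which is the chain-map lemma, already used for the previous proposition, not for this composition count). The paper's actual argument for (\ref{ds1}) is a non-existence argument rather than a cancellation argument: after splitting $D^{iL}=D^{iL}_1+D^{iL}_{>1}$ and $S^{oL}=S^{oL}_1+S^{oL}_{>1}$ by complexity, the trivial composite gives $D^{iL}_1\circ S^{oL}_1=\mathrm{id}$ and the complexity-$2$ composite gives $D^{iR}\circ S^{oR}=\mathrm{id}$, while for any composite $s*d$ with $k(s)\geq3$ or $k(d)\geq3$, the intermediate-state points $y_1=\mathbf{y}\cap\alpha_i$, $y_2=\mathbf{y}\cap\beta_j$ force $d$ to overlay $s$, and then either all four squares at some corner have positive multiplicity (contradicting the $iL/iR/oL/oR$ corner condition) or a point of the initial state lies in the interior of $d$ or $s$; in either case such a composite cannot exist, so there is nothing to pair. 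This is a genuine difference from your sketch: if you go looking for a pairing involution on the non-trivial composites for (\ref{ds1}), you will find the set to be paired is empty, and the real content is the positivity/corner-condition obstruction. Your proposal would likely still reach the conclusion, but it points in the wrong direction at this step.
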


\begin{proof}
The map $D^{iL}$ (respectively $S^{oL}$) can be decomposed as $D^{iL}=D^{iL}_1+D^{iL}_{>1}$ (respectively $S^{oL}=S^{oL}_1+S^{oL}_{>1}$), where the subscript represents the restriction on the complexity of the domains.
Then we can draw the following diagram:
\[
\xymatrix@R=28pt{
CF^{-H}(g)[U_1][1,1] \ar[r]^-{U_1-U_2} \ar[d]_-{S^{oL}_1} \ar[dr]_-{S^{oL}_{>1}} & CF^{-H}(g)[U_1] \ar[d]^-{S^{oR}} \\
\mathrm{I} \ar[d]_-{D^{\mathrm{iL}}_1} \ar@{-}[r] &\mathrm{N}\ar[ld]_-{D^{\mathrm{iL}}_{>1}} \ar[d]^-{D^{\mathrm{iR}}} \\
CF^{-H}(g)[U_1][1,1] \ar[r]^-{U_1-U_2}  & CF^{-H}(g)[U_1]
}
\]
Note that the complexities of domains of $\pi^{iL}$ and $\pi^{oL}$ are odd, and ones of $\pi^{iL}$ and $\pi^{oL}$ are even.

It is convenient to write the horizontal circle crossing the point $c$ as $\alpha_i$ and the vertical circle crossing the point $c$ as $\beta_j$.

Let us first examine the equation (\ref{ds1}).
To see the equation (\ref{ds1}), we will count domains of the left side of (\ref{ds1}) and check that domains except for the trivial ones are canceled in modulo 2.

Since we have obviously $D^{iL}_1\circ S^{oL}_1=\mathrm{id}$, all we need to check is $D^{iR}\circ S^{oR}=\mathrm{id}$, $\mathcal{D}\circ S^{oL}_{>1}=0$, and $\mathcal{D}\circ S^{oR}=0$.
Let $s*d$ be a composite domain with $s\in\pi^{oR}(\mathbf{x,y})\cup\pi^{oL}(\mathbf{x,y}),\ d\in\pi^{iR}(\mathbf{y,z})\cup\pi^{iL}(\mathbf{y,z})$ appearing in $D^{iR}\circ S^{oR}$ 
, $\mathcal{D}\circ S^{oL}_{>1}$, or $\mathcal{D}\circ S^{oR}$.
\begin{enumerate}[(1)]
\item If $k(s)=k(d)=2$, then $s*d$ appears in $D^{iR}\circ S^{oR}$ and we get $\mathbf{x=z}$.
\item Otherwise, consider two points $y_1=\mathbf{y}\cap\alpha_i$ and $y_2=\mathbf{y}\cap\beta_j$.
We see that $y_1,y_2\notin\mathbf{z}$ since $c\in\mathbf{z}$.
Then $d$ must overlay $s$.
If $k(s)=3$ and $k(d)=3$, at some of the corner points of $d,s$, all adjoining squares have a local multiplicity greater than $0$.
Therefore such a domain cannot be taken.
\item If $k(s)>3$ or $k(d)>3$, then there are some points of the initial state as a point that is not a corner in the interior of $d$ or $s$.
Again, such a domain cannot be taken.
\end{enumerate}
These arguments prove that $D^{iR}\circ S^{oR}=\mathrm{id}$, $\mathcal{D}\circ S^{oL}_{>1}=0$, and $\mathcal{D}\circ S^{oR}=0$.

To see the equation (\ref{ds2}), consider the following diagram$\colon$
\[
\xymatrix@R=28pt{
\mathrm{I} \ar[d]_-{D^{\mathrm{iL}}_1} \ar@{-}[r] &\mathrm{N}\ar[ld]_-{D^{\mathrm{iL}}_{>1}} \ar[d]^-{D^{\mathrm{iR}}} \\
CF^{-H}(g)[U_1][1,1] \ar[r]^-{U_1-U_2} \ar[d]_-{S^{oL}_1} \ar[dr]_-{S^{oL}_{>1}} & CF^{-H}(g)[U_1] \ar[d]^-{S^{oR}} \\
\mathrm{I}  \ar@{-}[r] &\mathrm{N}
}
\]
For $\mathbf{x\in S}(g'))$, let $p=d*s\ (d\in\pi(\mathbf{x,y}),\ s\in\pi(\mathbf{y,z}))$ be a composite domain appearing in $\mathcal{S}\circ\mathcal{D}$.
Now we will observe that each $p$ appears also in $\partial'\circ\mathcal{K}+\mathcal{K}\circ\partial'$ or $\mathrm{id}$.
We have five cases (figure \ref{fig:sd2}-\ref{fig:sd5})$\colon$

\begin{figure}
\centering
\includegraphics[scale=0.6]{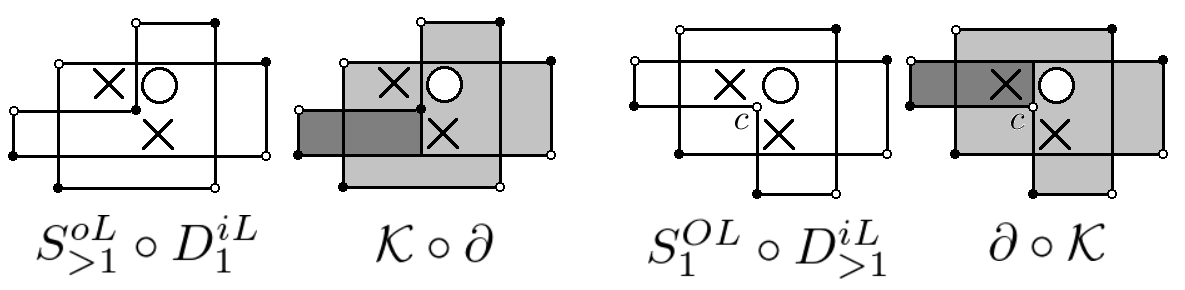}
\caption{Examples of the case (ii)}
\label{fig:sd2}
\includegraphics[scale=0.6]{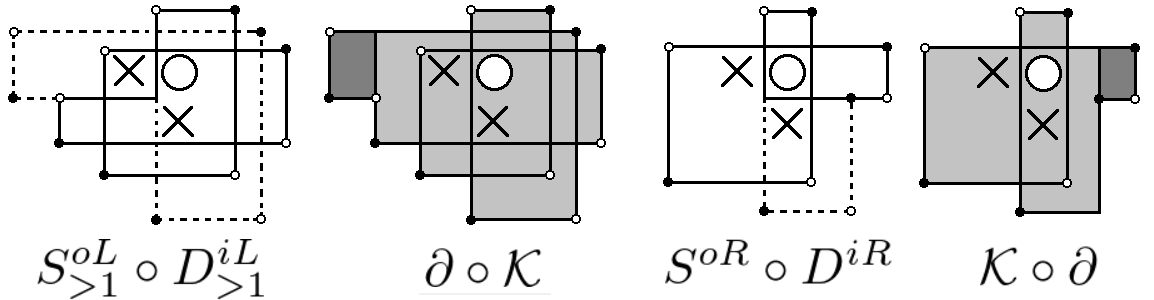}
\caption{Examples of the case (iii)}
\label{fig:sd3}
\includegraphics[scale=0.6]{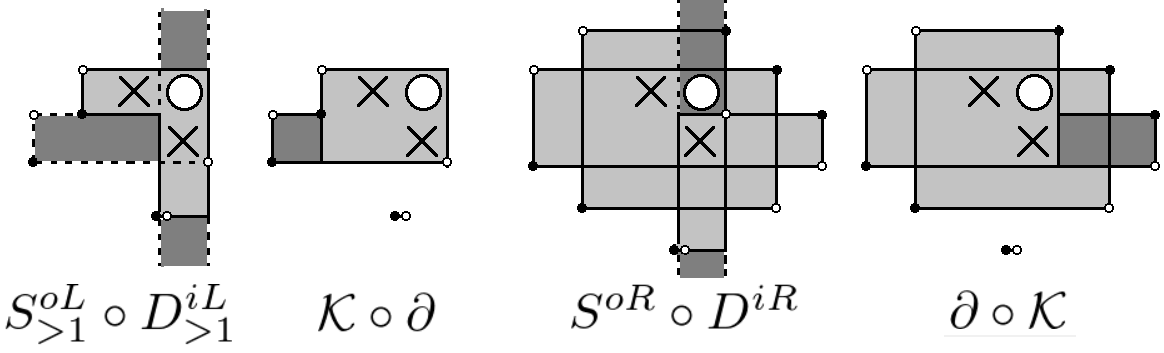}
\caption{Examples of the case (iv)}
\label{fig:sd4}
\includegraphics[scale=0.6]{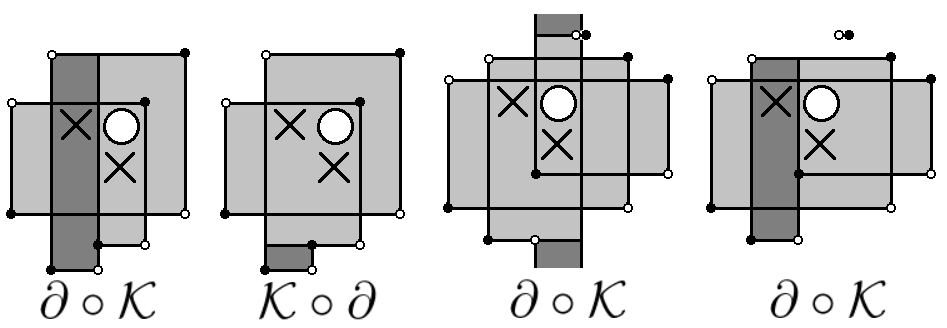}
\caption{Examples of the case (v)}
\label{fig:sd5}
\end{figure}
\end{proof}

\begin{enumerate}[(i)]
\item If $k(d)=k(s)=1$, then the composite domain $d*s$ must be trivial. This domain also appears in $\mathrm{id}_{CF^{-H}(g')}$.
\item If $k(d)=1$ or $k(s)=1$, then we can take a small square $r$ from $p$ by cutting $p$ along $\beta_j$ and another composition $p=(p-r)*r$ or $p=r*(p-r)$ appearing in $\partial'\circ\mathcal{K}$ or $\mathcal{K}\circ\partial'$.
In this case, $p-r$ is of type $K1$.
\item If $k(d)>1$ and $k(s)>1$ and $p$ does not contain any vertical thin annulus, then there is a $270^\circ$ corner at one point $x_l$ of $\mathbf{(x\cup y)\setminus(x\cap y)}$.
Cutting $p$ along the horizontal or vertical circle containing $x_l$ gives another decomposition using the domain of type $K2$.
\item If $k(d)>1$ and $k(s)>1$ and $p$ has the vertical thin annulus $p'$ containing $O_1$, then consider the domain $p-p'$.
This domain connects the same states as $p$ and can be decomposed using domains of type $K3$.
If it is decomposed as $p-p=r*k$ or $p-p=k*r$, the rectangle $r$ does not contain $O_1$, so they are canceled in modulo 2.
\item Finally, we need to check that the remaining domains in $\partial'\circ\mathcal{K}+\mathcal{K}\circ\partial'$ are canceled each other.
Let $p=r*k$ or $p=k*r$ be the remaining composite domain.
Let $m$ be the number of corners that $d$ and $s$ share, in other words, let $m=\#\{\mathbf{((x\cup y)\setminus(x\cap y))}\cap\mathbf{((y\cup z)\setminus(y\cap z))}\}$.
Then we have three cases$\colon$ $m=0,1,3$.
\begin{enumerate}[(a)]
\item If $m=0$, then $p$ can be decomposed as $p=r*k$ and $p=k*r$.
\item If $m=1$, then $p$ has a $270^\circ$ corner at one point of $\mathbf{(x\cup y)\setminus(x\cap y)}$.
Likely the case (iii), cutting at the $270^\circ$ corner in two different directions gives the two decompositions of $p$.
\item If $m=3$, then $p$ contains a vertical thin annulus $p'$ containing $O_1$, then two domains $p$ and $p-p'$ are canceled.
The domain of type $\bm{K1}$ or $\bm{K2}$ is used for the decomposition of $p$, and the domain of type $\bm{K3}$ of $p'$.
\end{enumerate}
\end{enumerate}

\section{Proof of the main theorem \ref{main} and theorem \ref{main2}}
\begin{proof}[Proof of Theorem \ref{main}]
If $g'$ is obtained from $g$ by a single cyclic permutation or commutation$'$, Proposition \ref{chainho1} and \ref{chainho2} provide $CF^{-H}(g)\simeq CF^{-H}(g')$. 
This chain homotopy equivalent gives induced chain homotopy equivalent $CF^{-H}_U(g)\simeq CF^{-H}_U(g')$ as Maslov graded chain complexes over $\mathbb{F}[U]$-module.
Proposition \ref{ct=ct} and \ref{U=t} provides $tCF^{-H}(g)\simeq tCF^{-H}(g')$.

If $g'$ is obtained from $g$ by a single stabilization$'$, then $CF^{-H}(g')\simeq \mathrm{Cone}(U_1-U_2)$ by Proposition \ref{chainho3}.
By identifying all $U_i$'s, $\mathrm{Cone}(U_1-U_2)$ turns into $CF^{-H}(g)\oplus CF^{-H}(g)\llbracket1,1\rrbracket$ because $U_1$ is homogeneous of degree $(-2,-1)$.
Then we obtain induced chain homotopy $CF^{-H}_U(g')\simeq CF^{-H}(g)\oplus CF^{-H}(g)\llbracket1,1\rrbracket$.
Proposition \ref{ct=ct} and \ref{U=t} provides $tCF^{-H}(g')\simeq tCF^{-H}(g)\oplus W_t$.
\end{proof}
\begin{proof}[Proof of Theorem \ref{main2}]
\label{propmain2}
For $t\in[0,1]$, the grading shift $\llbracket1-t\rrbracket$ from $W_t$ does not affect the value of the $\Upsilon$ because $1-t\geq0$.
\end{proof}

\section{Link cobordisms with grid homology}
In this section, we observe graph grid diagrams for two links connected by a link cobordism.
The basic idea is developed by Sarkar \cite{grid-tau-sarkar}.

First, we will use the extended grid diagram which has an $X$-marking and an $O^*$-marking in the same square.
The square containing both an $X$-marking and an $O^*$-marking represents an unknotted, unlinked component.
It is known that the homology of these grid diagrams is also invariant.
See \cite[Section 8.4]{grid-book} for details about extended grid diagrams.
A grid diagram representing a link is called \textbf{tight} if there is exactly one $O^*$-marking in each link component.

In general, the symmetrized Alexander grading is not canonical because it needs to calculate the homology of $CF^-(g)$.
However, if the balanced spatial graph is a link, we can take a tight grid diagram representing it, and the symmetrized Alexander grading is written explicitly:
from \cite[Section 8.2]{grid-book}, 
\begin{equation}
\label{def-Alexander-link}
A^H(\mathbf{x})=\mathcal{J}(\mathbf{x},\mathbb{X}-\mathbb{O})-\frac{1}{2}\mathcal{J}(\mathbb{X},\mathbb{X})+\frac{1}{2}\mathcal{J}(\mathbb{O},\mathbb{O})-\frac{n-l}{2},
\end{equation}
where $l$ is the number of link components.

According to Sarkar \cite{grid-tau-sarkar} and Vance \cite{grid-tau--Vance-spatial}, two tight grid diagrams representing two links connected by a link cobordism are connected by a finite sequence of link-grid moves.
These moves are commutation$'$s, (de-)stabilization$'$s, births, $X$-saddles, $O$-saddles, and deaths.

A grid diagram $g'$ is obtained from $g$ by a \textbf{birth} (figure \ref{fig:birth}) if adding one row and column to $g$ and putting an $X$-marking and $X$-marking in the square which is the intersection of the new row and column. A grid move \textbf{death} is the inverse move of a birth.
The move birth (respectively death) represents a birth (respectively a death) on link cobordism.
These moves are link-grid move $(4)$ (respectively $(7)$) in \cite{grid-tau-sarkar}.

A grid diagram $g'$ is obtained from $g$ by an \textbf{$X$-saddle} (figure \ref{fig:xsaddle}) if $g$ has a $2\times2$ small squares with two $X$-markings at tow-left and bottom-right, and $g'$ is obtained from $g$ by deleting these two markings and putting new ones at top-right and bottom-left.
The move $X$-saddle represents a saddle move on link cobordism.
These moves are link-grid move $(5)$ in \cite{grid-tau-sarkar}.

A grid diagram $g'$ is obtained from $g$ by \textbf{$O$-saddle} (figure \ref{fig:osaddle}) if $g$ has a $2\times2$ small squares with one $O^*$-marking at tow-left and one $O$-marking at bottom-right, and $g'$ is obtained from $g$ by deleting these two markings and putting new two $O^*$-markings at top-right and bottom-left.
The move $O$-saddle represents a split move on link cobordism.
These moves are link-grid move $(6)$ in \cite{grid-tau-sarkar}.

\begin{figure}
\centering
\includegraphics[scale=0.7]{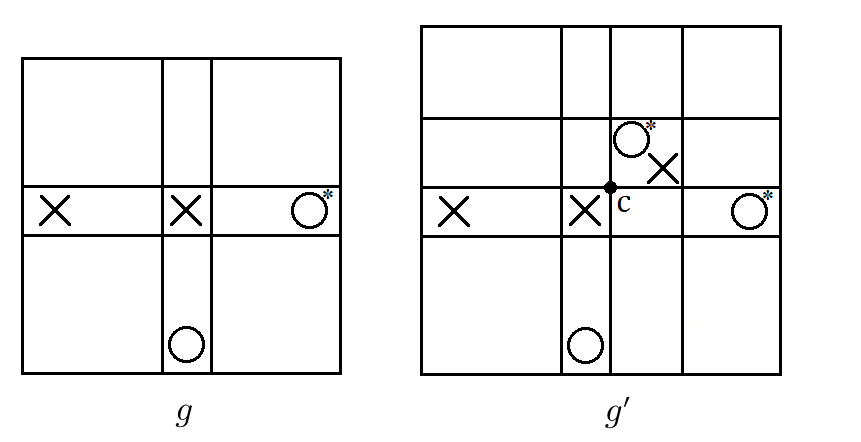}
\caption{A birth in $g$ produces $g'$, a death in $g'$ produces $g$.}
\label{fig:birth}
\includegraphics[scale=0.7]{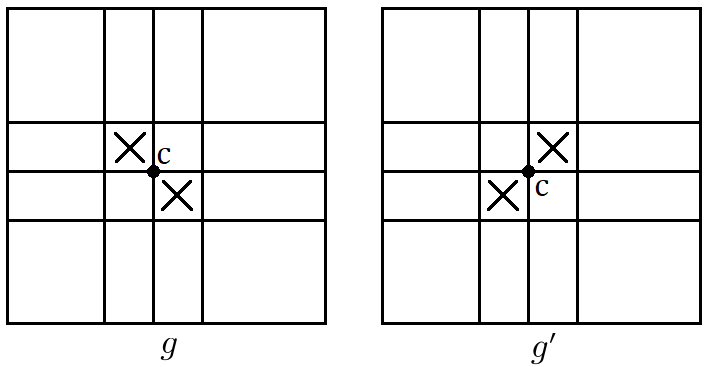}
\caption{An $X$-saddle in $g$ produces $g'$}
\label{fig:xsaddle}
\includegraphics[scale=0.7]{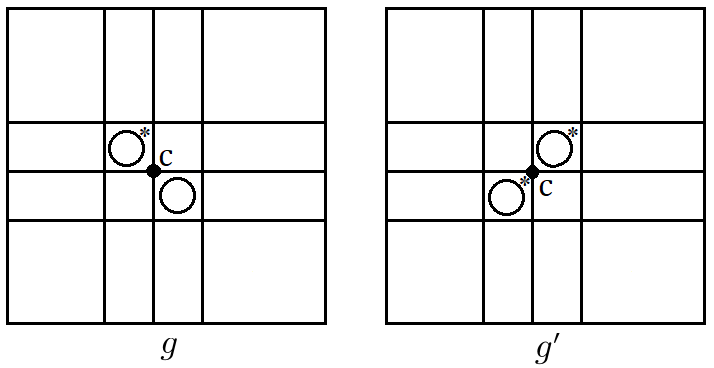}
\caption{An $O$-saddle in $g$ produces $g'$}
\label{fig:osaddle}
\end{figure}

Using grid homology for links, Sarkar \cite{grid-tau-sarkar} evaluated the maximum changes of Maslov and Alexander grading on each link-grid move.
We will check that we can define the appropriate maps also on the t-modified chain complexes and that the changes of t-grading are the same as Sarkar's evaluation with $M-t\cdot A$.

In order to prove theorem \ref{main3}, we evaluate the change of grading on each link-grid move.
It is convenient to introduce an alternative Alexander grading $A'\colon$
\begin{dfn}[{\cite[Definition 4.2]{grid-tau--Vance-spatial}}]
For $\mathbf{x\in S}(g)$,
\[
A'(\mathbf{x})=\mathcal{J}(\mathbf{x},\mathbb{X}-\mathbb{O})-\frac{1}{2}\mathcal{J}(\mathbb{X},\mathbb{X})+\frac{1}{2}\mathcal{J}(\mathbb{O},\mathbb{O})-\frac{n-1}{2}.
\]
The symmetrized Alexander grading $A^H$ can be obtained from $A'$ by adding $\frac{l-1}{2}$.
\end{dfn}
In this section, we are thinking about chain complexes $tCF^-(g)$ with t-grading using $A'$ rather than $A^H$, we set $\mathrm{gr}_t(\mathbf{x})=M(\mathbf{x})-tA'(\mathbf{x})$.

For $H(tCF^-(g))=tHF^-(g)$, we can define $\Upsilon'_g(t)$ in the same way as $\Upsilon_g(t)$.
It is clear that 
\begin{equation}
\label{up'up}
\Upsilon_g(t)=\Upsilon'_g(t)-\frac{l-1}{2}t.
\end{equation}

\subsection{births and deaths}
Proposition \ref{prop:birth} will imply that there are isomorphisms
\begin{align*}
D'\colon tHF^{-}(g')\to tHF^{-}(g)\oplus tHF^-(g)\llbracket1\rrbracket,\\
S'\colon tHF^{-}(g)\oplus tHF^-(g)\llbracket1\rrbracket \to tHF^{-}(g').
\end{align*}
These maps preserve t-grading if we use symmetrized Alexander grading $A^H$, so $D'$ shifts t-grading by $-\frac{1}{2}t$ and $S'$ by $\frac{1}{2}t$.
We can write these maps as $D'=(H((\mathcal{D}^{iR})^t),H((\mathcal{D}^{iL})^t))$ and $S'=(H((\mathcal{S}^{oR})^t),H((\mathcal{S}^{oL})^t))$ using the notations in subsection \ref{sub:sta} and Proposition \ref{ct=ct}.

As we regard $H(\mathcal{D}^{iL})$ as the map into $tHF^-(g)$ rather than $tHF^-(g)\llbracket1\rrbracket$, $H(\mathcal{D}^{iL})$ is surjective map and shifts t-grading by $1-\frac{1}{2}t$.
Then the maximum shift of t-grading of the homogeneous, non-torsion element in homology by a death is $1-\frac{1}{2}t$, which implies $\Upsilon'_g(t)\geq \Upsilon'_{g'}(t)+1-\frac{1}{2}t$.

On the other hand, the maximum change by a birth is $\frac{1}{2}t$.
Then we get $\Upsilon'_{g'}(t)\geq \Upsilon'_{g}(t)+\frac{1}{2}t$.

\subsection{$X$-saddles and $O$-saddles}
Before observing the saddles, we prepare an algebraic lemma.
Let $A$ be a $\mathcal{R}$-module.
Then the torsion submodule of $A$ is
\[
\mathrm{Tors}(A)=\{a\in A|\mathrm{there\ is\ a\ non-zero}\ p\in\mathcal{R}\ \mathrm{with}\ p\cdot a=0\}.
\]
\begin{lem}
\label{torsion}
Let $A,B$ be two $\mathcal{R}$-modules.
If $\alpha\colon A\to B$ and $\beta\colon B\to A$ are two module maps with the property that $\beta\circ\alpha=v^s$ for some $s\geq0$, then $\alpha$ induces an injective map from $A/\mathrm{Tors}(A)$ into $B/\mathrm{Tors}(B)$.
\end{lem}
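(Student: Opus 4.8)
The plan is to prove injectivity directly from the hypothesis $\beta\circ\alpha = v^s$ by chasing torsion. The key observation is that $v^s$ is a non-zero-divisor in $\mathcal{R}$: if $v^s \cdot r = 0$ for $r = \sum_{\gamma\in C} v^\gamma$, then looking at the minimal element $\gamma_0$ of $C$ (which exists since $C$ is well-ordered) we see $v^{s+\gamma_0}$ appears with coefficient $1$ in $v^s r$, a contradiction. Hence multiplication by $v^s$ is injective on $\mathcal{R}$, and more generally on any free $\mathcal{R}$-module; I will want the slightly stronger statement that $v^s$ acts injectively on $A/\mathrm{Tors}(A)$ for any $\mathcal{R}$-module $A$, which follows because if $v^s a \in \mathrm{Tors}(A)$, say $p v^s a = 0$ with $p \neq 0$, then $(pv^s) a = 0$ and $pv^s \neq 0$ (as $\mathcal{R}$ has no zero-divisors of this form — or at least $p v^s \neq 0$ since its lowest term is nonzero), so $a \in \mathrm{Tors}(A)$.

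First I would record that $\mathcal{R}$ has no zero-divisors, or at minimum that $v^s$ is a non-zero-divisor, via the well-ordering argument above on lowest-order terms. Next I would check the map $\bar\alpha\colon A/\mathrm{Tors}(A) \to B/\mathrm{Tors}(B)$ is well-defined: if $a \in \mathrm{Tors}(A)$ with $pa = 0$, $p\neq 0$, then $p\,\alpha(a) = \alpha(pa) = 0$, so $\alpha(a)\in\mathrm{Tors}(B)$. Then for injectivity: suppose $\bar\alpha(\bar a) = 0$, i.e. $\alpha(a) \in \mathrm{Tors}(B)$, so $q\,\alpha(a) = 0$ for some non-zero $q \in \mathcal{R}$. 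Apply $\beta$: $0 = \beta(q\,\alpha(a)) = q\,\beta(\alpha(a)) = q\,v^s a$. Since $q v^s \neq 0$ (its lowest-order term is the product of the lowest-order terms of $q$ and $v^s$, with coefficient $1$), this shows $a \in \mathrm{Tors}(A)$, i.e. $\bar a = 0$. Hence $\bar\alpha$ is injective.

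The argument is short and essentially formal; there is no serious obstacle. The one point requiring a little care — and the only place the specific structure of $\mathcal{R}$ enters — is the claim that $q v^s \neq 0$ whenever $q \neq 0$, equivalently that $v^s$ is not a zero-divisor. I would handle this by the lowest-term argument: writing $q = \sum_{\gamma \in C} v^\gamma$ with $C$ well-ordered and non-empty, the coefficient of $v^{\gamma_0 + s}$ in $q v^s$ (where $\gamma_0 = \min C$) is exactly $1$, since $\gamma_0 + s$ cannot be written as $\gamma + s$ for any other $\gamma \in C$. Everything else is a two-line diagram chase, so I would present the proof essentially in the order above without further elaboration.
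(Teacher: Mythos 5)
Your proof is correct and follows essentially the same approach as the paper's: given $\alpha(a)\in\mathrm{Tors}(B)$ with $q\alpha(a)=0$, apply $\beta$ to get $qv^s a=0$ and conclude $a\in\mathrm{Tors}(A)$. The paper's proof is terser and silently assumes $qv^s\neq 0$; your lowest-order-term argument filling in that $v^s$ is a non-zero-divisor in $\mathcal{R}$ is a worthwhile addition, not a departure.
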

\begin{proof}
If $\alpha(a)\in\mathrm{Tors}(B)$, then there is a non-zero element $p\in \mathcal{R}$ with $p\cdot\alpha(a)=0$.
Then $\beta(p\cdot\alpha(a))=v^{s}p\cdot a=0$, so $a\in\mathrm{Tors}(A)$.
\end{proof}
Then we observe $X$-saddles and $O$-saddles.

\begin{prop}
\label{x-saddle}
If $g'$ is obtained by an $X$-saddle, there are $\mathcal{R}$-module maps
\begin{align*}
\sigma\colon tHF^{-}(g)\to tHF^{-}(g'),\\
\mu\colon tHF^{-}(g') \to tHF^{-}(g).
\end{align*}
with the following properties$\colon$
\begin{itemize}
\item $\sigma$ shifts t-grading by $-\frac{1}{2}t$,
\item $\mu$ shifts t-grading by $-\frac{1}{2}t$,
\item $\mu\circ\sigma=v^t$,
\item $\sigma\circ\mu=v^t$.
\end{itemize}
\end{prop}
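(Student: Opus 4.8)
The plan is to construct $\sigma$ and $\mu$ at the chain level as maps $tCF^-(g) \to tCF^-(g')$ and $tCF^-(g') \to tCF^-(g)$ counting the appropriate saddle domains, in direct analogy with Sarkar's $X$-saddle map for grid homology of links, and then pass to homology. First I would recall the local picture: $g$ and $g'$ differ by swapping two $X$-markings across a $2\times 2$ block, so $\mathbf{S}(g)=\mathbf{S}(g')$ as sets of states, and the underlying $\mathbb{F}[U_1,\dots,U_n]$-modules of the grid complexes agree. The saddle map $\sigma$ is defined by counting the same pentagon/rectangle domains Sarkar uses, but with the $v$-power bookkeeping of Definition~\ref{t-mod}: a domain $p$ contributes $v^{t|\mathbb{X}_{g'}\cap p| + 2|\mathbb{O}\cap p| - t(\sum_{O_i\in\mathbb{O}\cap p} m_i)}$, where $\mathbb{X}_{g'}$ is the $X$-marking set of the target diagram. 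I would check that $\sigma$ is a chain map by the usual domain-pairing argument (as in Propositions~\ref{t-ch1} and the chain-map proofs of Section 5): composite domains in $\sigma\circ\partial_t^- + \partial_t^-\circ\sigma$ cancel in pairs modulo $2$, using that the $v$-exponents are additive under composition. The map $\mu$ is defined symmetrically using the reverse saddle domains.

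Next I would verify the two grading shifts. By Sarkar's computation, the $X$-saddle map shifts Maslov grading and the relevant (here $A'$, equivalently $A^H$ since they differ by a constant on a fixed diagram) Alexander grading by definite amounts; combining these via $\mathrm{gr}_t = M - tA'$ (the convention fixed just before this subsection) gives the claimed shift of $-\tfrac12 t$ for $\sigma$, and the symmetric computation gives $-\tfrac12 t$ for $\mu$. The key point is that an $X$-saddle does not change the grid number $n$, so there is no $W_t$-type correction; only the redistribution of one $X$-marking matters, and the $v$-exponent in the definition of $\partial_t^-$ and of the saddle maps tracks exactly this.

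The two composition identities $\mu\circ\sigma = v^t$ and $\sigma\circ\mu = v^t$ are the heart of the statement. Here I would argue as follows: $\mu\circ\sigma$ is computed by counting composite domains $s_1 * s_2$ with $s_1$ a forward-saddle domain and $s_2$ a backward-saddle domain from a state to itself (and, up to adding thin annuli, to other states). As in Sarkar's proof, all such composites cancel in pairs modulo $2$ except for one distinguished minimal composite domain at each generator, which is the $2\times 2$ square swept out by moving the $X$-marking over and back. That distinguished domain contains exactly the two $X$-markings involved and no $O$-markings (in the generic local configuration), so it contributes $v^{t|\mathbb{X}\cap p|} = v^{2t}$... — here I must be careful: the correct normalization is that the minimal composite contributes $v^{t}$ after accounting for which $X$-markings lie in the swept region versus are shared between $\mathbb{X}_g$ and $\mathbb{X}_{g'}$, matching the stated $v^t$. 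I would pin this down by writing the exponent $t|\mathbb{X}\cap p| + 2|\mathbb{O}\cap p| - t\sum m_i$ explicitly for the unit square domain and checking it equals $t$; this is the one genuinely computational step. The identity $\sigma\circ\mu = v^t$ follows by the same argument with the roles of $g$ and $g'$ exchanged (using that the configuration is symmetric under the swap). Finally, passing to homology, all four bullet points follow since $\sigma,\mu,\partial_t^-$ are all defined over $\mathcal{R}$ and the identities hold on the nose at chain level.

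The main obstacle I anticipate is not the chain-map property or the grading shifts — those are routine adaptations of Section 5 and Sarkar's work — but rather the precise $v$-exponent bookkeeping in the composition identities: making sure that the distinguished non-cancelling composite domain contributes exactly $v^t$ and not some other power, and that this is independent of the global position of the $2\times 2$ block (in particular of whether the rows/columns involved carry extra $X$-markings, which would change the $m_i$'s). Handling the case where the saddle block interacts with an $O^*$-marking row, so that some $m_i > 1$, is where the balancedness hypothesis and the $-t\sum m_i$ correction term must be invoked, exactly as in the proof of Proposition~\ref{t-ch1}.
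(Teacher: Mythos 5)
The mechanism you propose for building $\sigma$ and $\mu$ is not the right one for an $X$-saddle, and this creates a genuine gap. Pentagon-counting is how one proves commutation invariance; for an $X$-saddle there is nothing to count, because $\mathbf{S}(g)=\mathbf{S}(g')$ and the markings $\mathbb{X}_g$, $\mathbb{X}_{g'}$ differ only in a $2\times 2$ block while $\mathbb{O}$ and the $m_i$ are unchanged. The paper's maps are \emph{not} domain-counting maps at all: writing $c$ for the center of the $2\times 2$ block, one sets $\sigma(\mathbf{x})=\mathbf{x}$ if $c\in\mathbf{x}$ and $\sigma(\mathbf{x})=v^t\mathbf{x}$ if $c\notin\mathbf{x}$, and $\mu$ the other way around. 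The identities $\mu\circ\sigma=v^t$ and $\sigma\circ\mu=v^t$ are then \emph{immediate from the definition}, with no ``distinguished minimal composite domain'' argument and no exponent-bookkeeping to pin down. In fact the composite-cancellation picture you describe (everything cancels except one small composite) is the structure of the (de)stabilization identities $\mathcal{D}\circ\mathcal{S}=\mathrm{id}$, not of the saddle identities; if you tried to carry it out here it would not produce $\mu\circ\sigma=v^t$ with a single surviving term in the way you sketch, and you flag this yourself as the step you could not close.

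The genuine computational content is exactly what your proposal leaves implicit: (1) checking that the scaled identity is a chain map, which amounts to observing that for a rectangle $r\in\mathrm{Rect}^\circ(\mathbf{x},\mathbf{y})$ the $v$-exponents in $\partial_t^-$ computed in $g$ and in $g'$ differ by $t\left(|\mathbb{X}_g\cap r|-|\mathbb{X}_{g'}\cap r|\right)$, and that this is compensated exactly by the $v^t$ factors $\sigma$ inserts according to whether $c\in\mathbf{x}$ and $c\in\mathbf{y}$; and (2) computing the Alexander grading shift $A'_{g'}(\mathbf{x})-A'_g(\mathbf{x})=\pm\tfrac12$ (sign depending on $c\in\mathbf{x}$), with $M$ unchanged, which yields the stated $-\tfrac12 t$ shift of $\mathrm{gr}_t$. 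Your paragraph on grading shifts gestures at this but routes it through a nonexistent domain count. Your instinct that the $m_i$'s and the balancedness hypothesis must be watched is reasonable in general, but is not actually needed here: the $X$-saddle permutes $X$'s within the same rows and columns, so each $m_i$ is unchanged, and for links $m_i\equiv 1$ anyway; this observation is precisely what makes the paper's direct definition go through.
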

\begin{proof}
Let $c$ be the point in the center of the $2\times2$ squares as in Figure \ref{fig:xsaddle}.
Define $\sigma\colon tCF^{-}(g)\to tCF^{-}(g')$ and $\mu\colon tCF^{-}(g) \to tCF^{-}(g')$ by
\begin{center}
$\sigma(\mathbf{x})=
\begin{cases}
\mathbf{x} & (c\in\mathbf{x})\\
v^t\cdot\mathbf{x} & (c\notin\mathbf{x}).
\end{cases}
$
and
$\mu(\mathbf{x})=
\begin{cases}
v^t\cdot\mathbf{x} & (c\in\mathbf{x})\\
\mathbf{x} & (c\notin\mathbf{x}).
\end{cases}
$
\end{center}
It is straightforward to see that these two maps preserve Maslov grading and increase Alexander grading by $\frac{1}{2}$, so they shift t-grading by $-\frac{1}{2}t$.
Obviously, both $\mu\circ\sigma$ and $\sigma\circ\mu$ are multiplication by $v^t$.
It is also clear that both $\sigma$ and $\mu$ are chain maps.
Think of the maps on homology induced by them.
\end{proof}

By Lemma \ref{torsion}, Proposition \ref{x-saddle} means that the shift of t-grading of the homogeneous, non-torsion element in homology by a $X$-saddle is $-\frac{1}{2}t$

\begin{prop}
\label{o-saddle}
If $g'$ is obtained by an $O$-saddle, there are $\mathcal{R}$-module maps
\begin{align*}
\sigma\colon tHF^{-}(g)\to tHF^{-}(g'),\\
\mu\colon tHF^{-}(g') \to tHF^{-}(g).
\end{align*}
with the following properties$\colon$
\begin{itemize}
\item $\sigma$ shifts t-grading by $-1+\frac{1}{2}t$,
\item $\mu$ shifts t-grading by $-1+\frac{1}{2}t$,
\item $\mu\circ\sigma=v^{2-t}$,
\item $\sigma\circ\mu=v^{2-t}$.
\end{itemize}
\end{prop}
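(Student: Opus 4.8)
The plan is to imitate the proof of Proposition~\ref{x-saddle} step for step, with the "$X$-weight" replaced by the "$O$-weight". Let $c$ be the grid point at the centre of the $2\times2$ block in Figure~\ref{fig:osaddle}, i.e.\ the corner common to the four squares carrying the markings moved by the $O$-saddle. The grids underlying $g$ and $g'$ are literally the same (same $\boldsymbol\alpha$, $\boldsymbol\beta$, hence the same states and the same sets $\mathrm{Rect}^\circ(\mathbf x,\mathbf y)$), and for the complexes $tCF^-$ of this section an $O$-saddle is nothing but the move of the two $O$-markings within the block, their columns being interchanged (the type change $O\mapsto O^{*}$ and the jump of $l$ by one being invisible to $tCF^-$, since we work with the grading $A'$, which sees neither). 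I would accordingly define $\sigma\colon tCF^-(g)\to tCF^-(g')$ and $\mu\colon tCF^-(g')\to tCF^-(g)$ to be the identity on states up to a power of $v$ depending only on whether $c\in\mathbf x$, namely
\[
\sigma(\mathbf x)=\begin{cases}\mathbf x & (c\in\mathbf x)\\ v^{2-t}\,\mathbf x & (c\notin\mathbf x),\end{cases}\qquad \mu(\mathbf x)=\begin{cases}v^{2-t}\,\mathbf x & (c\in\mathbf x)\\ \mathbf x & (c\notin\mathbf x)\end{cases}
\]
(the two cases may have to be swapped, depending on the sign coming out of the grading computation below). Since $2-t\ge 0$ for $t\in[0,2]$ these coefficients lie in $\mathcal R$; and because $\sigma$ and $\mu$ are the identity on underlying states while the two $c$-cases carry complementary exponents $0$ and $2-t$, the composites $\mu\circ\sigma$ and $\sigma\circ\mu$ are both multiplication by $v^{2-t}$, as wanted.

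Next I would pin down the grading. An $O$-saddle leaves $\mathbb X$, the grid number $n$ and every $m_i$ unchanged and only moves two markings of $\mathbb O$, so the changes of $M$ (formula~(\ref{mm})) and of $A'$ in passing from $g$ to $g'$ come solely from those two markings; a short computation with the defining formulas for $M$ and $A'$ gives that, for a state $\mathbf x$, the quantity $\mathrm{gr}_t(\mathbf x)=M(\mathbf x)-tA'(\mathbf x)$ changes by $-(1-\tfrac t2)$ when $c\in\mathbf x$ and by $+(1-\tfrac t2)$ when $c\notin\mathbf x$. Together with the chosen powers of $v$ this makes both $\sigma$ and $\mu$ homogeneous of degree $-1+\tfrac t2$.

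The substance of the proof is that $\sigma$ and $\mu$ are chain maps, i.e.\ that they intertwine the two differentials $\partial^-_t$. As $g$ and $g'$ share the same rectangles, this reduces to the check that for every $r\in\mathrm{Rect}^\circ(\mathbf x,\mathbf y)$ the $g$- and $g'$-weights of $r$ differ by the factor $v^{\,e(\mathbf x)-e(\mathbf y)}$, where $e(\cdot)\in\{0,2-t\}$ records the $c$-case. A rectangle not meeting the horizontal circle through $c$ has $c\in\mathbf x\iff c\in\mathbf y$ and identical weights, so nothing happens there; the only rectangles that matter are those having $c$ as a corner, which are exactly the ones realising a change of $c$-status, and each of them covers exactly one square of the block. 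For such an $r$ one computes the difference of the weight exponents coming from that one block square — and here, in contrast to the $X$-saddle where only the $t|\mathbb X\cap r|$ term moved (and $X$-markings carry no $m$-weight), both the $2|\mathbb O\cap r|$ term and the $-t\sum_{O_i\in\mathbb O\cap r}m_i$ term change — and checks that the net factor is $v^{\pm(2-t)}$, matching $v^{e(\mathbf x)-e(\mathbf y)}$. The fiddly point I expect to dwell on is precisely this local bookkeeping: one must run through the four possible positions of $c$ as a corner of $r$ and see that in each the $\pm2$ from $|\mathbb O\cap r|$ and the $\mp t m_i$ from the $m$-term combine correctly; for the tight link diagrams of this section each relevant row carries a single $X$-marking, so $m_i=1$, the $m$-term contributes exactly $\mp t$, and the total is $\pm(2-t)$ as needed. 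This is a finite case analysis parallel to — though a shade heavier than — the one implicit in Proposition~\ref{x-saddle}, and the analogous check for $\mu$ follows by exchanging $g$ and $g'$. Once $\sigma$ and $\mu$ are chain maps of the stated degrees, they induce $\mathcal R$-module maps on $tHF^-$, the relation $\mu\circ\sigma=\sigma\circ\mu=v^{2-t}$ descends to homology, and the proposition is proved (and, via Lemma~\ref{torsion}, it then controls the shift of $\mathrm{gr}_t$ on non-torsion classes under an $O$-saddle, just as the $X$-saddle case did).
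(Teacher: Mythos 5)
Your proposal is correct and follows essentially the same route as the paper's proof, which is quite terse: it simply defines the same maps $\sigma,\mu$ with $v^{2-t}$ in place of $v^t$, asserts that they drop Maslov grading by $1$ and Alexander grading by $\tfrac12$, and declares the chain-map property and compositions "clear." Your elaboration of the chain-map check — reducing to rectangles with $c$ as a corner, noting that those not cornered at $c$ meet the two moved $O$-markings symmetrically, and tracking how both the $2|\mathbb O\cap r|$ and $-t\sum m_i$ terms contribute (with $m_i=1$ for tight link diagrams, a point the paper leaves implicit) — fills in the details the paper omits, but it is not a different argument; it is the argument the paper is gesturing at.
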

\begin{proof}
We can prove this in the same way as Proposition \ref{x-saddle}.;
All we need is to consider $\sigma\colon tCF^{-}(g)\to tCF^{-}(g')$ and $\mu\colon tCF^{-}(g) \to tCF^{-}(g')$ as
\begin{center}
$\sigma(\mathbf{x})=
\begin{cases}
\mathbf{x} & (c\in\mathbf{x})\\
v^{2-t}\cdot\mathbf{x} & (c\notin\mathbf{x}).
\end{cases}
$
and
$\mu(\mathbf{x})=
\begin{cases}
v^{2-t}\cdot\mathbf{x} & (c\in\mathbf{x})\\
\mathbf{x} & (c\notin\mathbf{x}).
\end{cases}
$
\end{center}
By definitions, these two maps drop Maslov grading by $1$ and Alexander grading by $\frac{1}{2}$, so they shift t-grading by $-1+\frac{1}{2}t$.
It is clear that $\mu\circ\sigma$ and $\sigma\circ\mu$ are multiplication by $v^{2-t}$.

\end{proof}
Again using Lemma \ref{torsion}, Proposition \ref{o-saddle} means that the shift of t-grading of the homogeneous, non-torsion element in homology by an $O$-saddle is $-1+\frac{1}{2}t$.

\section{Proof of the main Theorem \ref{main3}}
We verify Theorem \ref{main3} using the same method as Sarkar \cite{grid-tau-sarkar} and Vance \cite{grid-tau--Vance-spatial}.
\begin{prop}[{\cite[Theorem 4.1]{grid-tau-sarkar}}]
\label{link-grid move}
If $g_1,g_2$ are two tight grid diagrams representing $l_1,l_2$-component links $L_1, L_2$, respectively, and if there is a link cobordism with $b$ births, $s$ saddles, and $d$ deaths, then there is a sequence of link-grid moves connecting from $g_1$ to $g_2$, such that there are exactly $b$ births, $s-d+l_1-l_2$ $X$-saddles, $d-l_1+l_2$ $O$-saddles, and $d$ deaths, and these happen in this order.
\end{prop}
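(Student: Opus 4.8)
The plan is to follow the strategy of Sarkar \cite{grid-tau-sarkar}, extended as in Vance \cite{grid-tau--Vance-spatial} to links of possibly several components, which turns a Morse-theoretic decomposition of the cobordism into a sequence of link-grid moves. First I would perturb the projection $S^3\times[0,1]\to[0,1]$ to a function that is Morse on the cobordism surface $F$, so that $F$ is assembled from $b$ index-zero critical points (each a birth of a small unknotted, unlinked circle), $s$ index-one critical points (each a coherent band move), and $d$ index-two critical points (each a death). By the standard Morse-theoretic rearrangement of critical points in order of index, I may assume that every birth occurs below every saddle and every saddle below every death. This exhibits the cobordism as the composite of an isotopy and $b$ births carrying $L_1$ to the disjoint union of $L_1$ with a $b$-component unlink, followed by a sequence of $s$ band moves, followed by $d$ deaths carrying the disjoint union of $L_2$ with a $d$-component unlink to $L_2$.

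Next I would realize each elementary piece by link-grid moves. The $b$ births become $b$ applications of the birth move to $g_1$. Dually, before each death its dying circle must be isolated in a single row and column as an unknotted, unlinked component; since commutations$'$ and (de-)stabilizations$'$ do not change the underlying link, this can always be arranged, after which the circle is removed by the death move. Each of the $s$ band moves is, after further commutations$'$ and (de-)stabilizations$'$, brought into a position where its band occupies a single $2\times 2$ block, and is then performed either as an $X$-saddle, which leaves every $O^*$-marking in place, or as an $O$-saddle, which creates or absorbs one $O^*$-marking. One then permutes the sequence so that all $X$-saddles precede all $O$-saddles --- using that an $X$-saddle and an $O$-saddle supported in disjoint regions of the torus commute, and inserting extra commutations$'$ and (de-)stabilizations$'$ to separate them otherwise --- and, if necessary, pads the cobordism with a few canceling birth--death pairs so that the resulting counts are nonnegative. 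Finally, a bookkeeping of the number of link components and of $O^*$-markings along the whole sequence (births and deaths change both by $\pm 1$; an $X$-saddle changes the component count by $\pm 1$ and fixes the number of $O^*$-markings; an $O$-saddle changes both by the same $\pm 1$), together with the hypothesis that the endpoints $g_1$ and $g_2$ are tight, yields exactly $b$ births, $s-d+l_1-l_2$ $X$-saddles, $d-l_1+l_2$ $O$-saddles, and $d$ deaths, in that order.

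The step I expect to be the main obstacle is showing that an arbitrary coherent band move can be realized by a single grid saddle after a controlled sequence of commutations$'$ and (de-)stabilizations$'$: one must check that the two strands joined by the band, together with all the markings that lie between them, can always be maneuvered into the required local $2\times 2$ form without creating obstructions elsewhere on the torus. This is the technical heart of Sarkar's argument and I would import it directly. The remaining difficulty is the careful reordering into the order births, $X$-saddles, $O$-saddles, deaths while preserving the four counts; carrying this out for tight diagrams of links rather than knots --- in particular tracking the $O^*$-markings that are created and destroyed --- is the part that goes beyond \cite{grid-tau-sarkar} and is handled as in \cite{grid-tau--Vance-spatial}.
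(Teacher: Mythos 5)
The paper does not give its own proof of this proposition --- it simply quotes \cite[Theorem~4.1]{grid-tau-sarkar} --- so there is no internal argument to compare against; your proposal is a reconstruction of Sarkar's proof, and your overall strategy (normalize the Morse function so that births precede saddles precede deaths, realize each elementary piece by link-grid moves, then count) is indeed the one Sarkar uses. The concrete gap is in the bookkeeping. You assert that ``births and deaths change both [the component count and the number of $O^*$-markings] by $\pm 1$.'' Under that symmetric convention, conserving $O^*$-markings through the sequence and using tightness of $g_1$ and $g_2$ forces $l_1 + b + (\#\,O\text{-saddles}) - d = l_2$, i.e.\ $\#\,O\text{-saddles} = d - l_1 + l_2 - b$, which is off by $b$ from the stated count. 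To reproduce Sarkar's formula one has to treat the two moves asymmetrically: a birth (link-grid move~(4)) installs the new unknot with an \emph{ordinary} $O$-marking, leaving the $O^*$-count unchanged, while a death (link-grid move~(7)) removes a component carrying an $O^*$ and so drops the $O^*$-count by one; then $l_1 + 0 + (\#\,O\text{-saddles}) - d = l_2$ gives exactly $d - l_1 + l_2$, and the remaining $s - (d - l_1 + l_2) = s - d + l_1 - l_2$ saddles are $X$-saddles. Your symmetric description of births and deaths makes both saddle counts come out wrong by $b$.

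A second problem is the suggestion to ``pad the cobordism with a few canceling birth--death pairs so that the resulting counts are nonnegative.'' That step is not available here, because the conclusion asserts there are \emph{exactly} $b$ births and \emph{exactly} $d$ deaths; any padding changes both of those numbers and hence proves a different statement. The nonnegativity of $s - d + l_1 - l_2$ and $d - l_1 + l_2$ has to fall out of the normalization of the Morse function and of the specific way the saddles are realized and ordered as grid moves, not from a post-hoc insertion of canceling critical points. Related to this, the reordering of $X$-saddles before $O$-saddles is not a free permutation of an arbitrary band sequence: in Sarkar's construction that order is built in from the start (the $O$-saddles are exactly the splits that create the $O^*$-marked unknots to be killed by the deaths), rather than obtained afterwards by commuting disjointly supported saddles.
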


\begin{proof}[Proof of Theorem \ref{main3}]
Let $g_1,g_2$ be two grid diagrams represents $L_1,L_2$, respectively.
By Proposition \ref{link-grid move}, there is a sequence of link-grid moves taking $g_1$ to $g_2$ such that $b$ births, $s-d+l_1-l_2$ $X$-saddles, $d-l_1+l_2$ $O$-saddles, and $d$ deaths happen in this order.
Take a homogeneous, non-torsion element $\alpha\in tHF^-(g_1)$ whose t-grading is $\Upsilon'_{g_1}(t)$.
Composing maps in the previous section, we get a map that sends $\alpha$ onto $tHF^-(g_2)$.
By adding up the t-grading shifts of each link-grid move, then we get
\begin{align*}
\Upsilon'_{g_1}(t)+b\cdot\frac{1}{2}t+(s-d+l_1-l_2)\cdot\left(-\frac{1}{2}t\right)+(d-l_1+l_2)\cdot\left(-1+\frac{1}{2}t\right)+d\cdot\left(1-\frac{1}{2}t\right)\\
\leq\Upsilon'_{g_2}(t),
\end{align*}
so
\begin{align*}
\Upsilon'_{g_1}(t)-\frac{l_1-1}{2}t-\left(\frac{1}{2}(s-b-d)+1-\frac{l_1+l_2}{2}\right)t-(l_1-1)t+l_1-l_2\\
\leq\Upsilon'_{g_2}(t)-\frac{l_2-1}{2}t.
\end{align*}
Use (\ref{up'up}) and $g=\frac{1}{2}(s-b-d)+1-\frac{l_1-l_2}{2}$, then
\[
\Upsilon_{L_1}(t)-tg-t(l_1-1)-(l_1-l_2)\leq\Upsilon_{L_2}(t)
\]
We can once get the other inequality if we reverse the direction of link cobordism, so we see that
\[
\Upsilon_{L_1}(t)-tg-t(l_1-1)-(l_1-l_2)\leq\Upsilon_{L_2}(t)\leq\Upsilon_{L_1}(t)+tg+t(l_2-1)+(l_2-l_1).
\]
\end{proof}

\section{Proof of some properties of the $\Upsilon$ invariant}

\subsection{The value of $\Upsilon$ at $t=0$}
\begin{proof}[Proof of Proposition \ref{prop:0}]
When $t=0$, the t-modified chain complex $tCF^{-H}(g)$ is independent of the $X$-markings because the differential is
\[
\partial_t^-(\mathbf{x})=\sum_{\mathbf{y}\in\mathbf{S}(g)}\left(
\sum_{r\in \mathrm{Rect}^\circ(\mathbf{x,y})}v^{2|\mathbb{O}\cap r|}
\right)\mathbf{y},
\]
and t-grading is
\[
\mathrm{gr}_t(v^\alpha\mathbf{x})=M(\mathbf{x})-\alpha.
\]
Also, we do not need to distinguish between $O$-markings and $O^*$-markings.
Let $n$ be a size of $g$.
Remove all $X$-markings of $g$ and put $n$ $X$-markings so that the new tight grid diagram $g'$ represents an unknot.
Then $HF^{-H}(g')\cong\mathbb{F}[U]$ in grading zero and $tCF^{-H}(g)\cong tCF^{-H}(g')$ as chain complexes.
\begin{figure}
\centering
\includegraphics[scale=0.7]{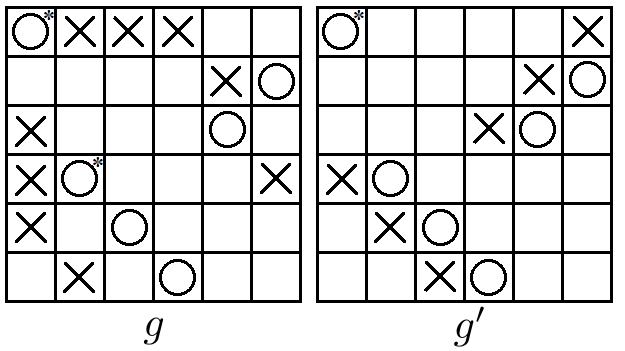}
\caption{Getting grid diagram representing an unknot}
\end{figure}
By Proposition \ref{ct=ct}, the universal coefficient theorem, and \cite[Lemma 14.1.11]{grid-book}, we have $tHF^{-H}(g')\cong H\left((\frac{CF^{-H}(g')}{U_1=\dots=U_n})\otimes_{\mathbb{F}[U]}\mathcal{R}_0\right)\cong\mathcal{R}_0\otimes W_0^{n-1}$, where $\mathcal{R}_0$ is the t-graded module isomorphic to $\mathcal{R}$ in grading zero and $W_0\cong\mathbb{F}_{0}\oplus\mathbb{F}_{-1}$ (in other words, $t=0$ for $W_t$).
Similar to the proof of Theorem \ref{main2}, the grading shift $\llbracket1\rrbracket$ does not affect the value of the $\Upsilon$.
Therefore $\Upsilon_g(0)=\Upsilon_{g'}(0)=0$.
\end{proof}

\subsection{Crossing change}
\begin{prop}
\label{cc}
If two graph grid diagrams $g_+$ and $g_-$ represent two links $L_+$ and $L_-$ that differ in a crossing change, then there are $\mathcal{R}$ maps
$C^t_-\colon tHF^{-H}(g_+)\to tHF^{-H}(g_-)$ and $C^t_+\colon tHF^{-H}(g_-)\to tHF^{-H}(g_+)$,
with the following properties$\colon$
\begin{itemize}
\item $C_-$ is graded,
\item $C_+$ shifts t-grading by $-2+t$,
\item $C_-\circ C_+=v^{2-t}$,
\item $C_+\circ C_-=v^{2-t}$,
\end{itemize}
\end{prop}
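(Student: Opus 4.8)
The plan is to realize the crossing change as an explicit local move on grid diagrams, to build $C_\pm$ (with wrong-way partners) as maps on the grid chain complexes, to lift these to the $t$-modified complexes by the construction $f\mapsto f^t$ of Proposition~\ref{ct=ct}, and then to read off the $\mathrm{gr}_t$-shifts and the two composition identities. The shape of the argument parallels the proofs of Propositions~\ref{x-saddle} and~\ref{o-saddle}; Lemma~\ref{torsion} is what subsequently converts Proposition~\ref{cc} into the inequality of Proposition~\ref{prop:1}.

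First I would fix convenient diagrams. By the invariance of $\Upsilon$ (Theorem~\ref{main2}), hence of $tHF^{-H}$ up to tensoring with powers of $W_t$ (Theorem~\ref{main}), we may replace $g_\pm$ by any grid diagrams for $L_\pm$; so, following the realization of a crossing change on grid diagrams used in \cite{grid-book} and by F\"{o}ldv\'{a}ri~\cite{Foldvari-grid-upsilon}, I would arrange that $g_+$ and $g_-$ agree outside a small block of squares around the crossing and differ there by relocating a marking---equivalently, that $g_+$ and $g_-$ are related through one common diagram by stabilization$'$- and commutation$'$-type moves. This fixes a natural identification of (most of) the states and of the underlying modules over $\mathbb{F}[U_1,\dots,U_n]$, and singles out a point $c$ near the crossing, just as in the saddle proofs.

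Next I would define $C_-$ and $C_+$ on the grid chain complexes $CF^{-H}(g_\pm)$ by counting the positive domains supported in the crossing block, in the style of the pentagon maps for commutation$'$ and of the stabilization$'$ maps $\mathcal{D},\mathcal{S}$ of Section~5 (equivalently of \cite[Chapter 13]{grid-book}): $C_-$ counts the ``short'' domains and $C_+$ the ``long'' ones, which cover one extra $O$-marking. That $C_\pm$ are chain maps is the usual pairing of the terms of $C_\pm\circ\partial+\partial\circ C_\pm$, and that they are Alexander filtered is an elementary inspection, since these domains are independent of the $X$-markings exactly as in Section~5. Applying Proposition~\ref{ct=ct} after passing to $CF^{-H}_U(g_\pm)$ then produces $C^t_\pm$ and recovers F\"{o}ldv\'{a}ri's maps on the $t$-modified complexes. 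Since $C_-$ preserves the Maslov grading and the Alexander filtration, $C^t_-$ is graded; and since $C_+$ drops the Maslov grading by $2$ and the symmetrized Alexander grading by $1$---in a tight diagram the extra covered $O$-marking may be taken to lie in a row with a single $X$-marking---the map $C^t_+$ shifts $\mathrm{gr}_t$ by $-2+t$. Finally, the identities $C_-\circ C_+=C_+\circ C_-=v^{2-t}$ follow by pairing composite domains: all composites cancel in pairs modulo $2$ except one distinguished domain through the extra $O$-marking, exactly as in the proofs of $\mathcal{D}\circ\mathcal{S}=\mathrm{id}$, of (\ref{ds2}), and of the saddle identities, and this surviving domain contributes the scalar $v^{2-t}$ in $tHF^{-H}$. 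Taking homology gives the asserted maps.

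The main difficulty lies in the construction of $C_\pm$ and in the composition identities. One has to pin down the correct local model for a crossing change on grid diagrams; verify that the naively defined domain counts give honest Alexander-filtered chain maps, so that Proposition~\ref{ct=ct} applies; and carry out the domain-pairing that forces the compositions to be exactly $v^{2-t}$ rather than a bare power of $U$. It is the interplay between the number of $X$-markings covered by the relevant domains and the term $-tA^H$ in $\mathrm{gr}_t$ that produces the factor $v^{-t}$, and making this bookkeeping come out precisely is where the care is required.
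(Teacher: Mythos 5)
Your proposal is correct and follows essentially the same route as the paper: the paper's proof is the one-liner of citing \cite[Proposition 6.1.1]{grid-book} for the grid-level cross-commutation maps $C_\pm$ (bigraded of degrees $(0,0)$ and $(-2,-1)$ with $C_-\circ C_+\simeq U\simeq C_+\circ C_-$) and then applying Proposition~\ref{ct=ct} to lift them to the $t$-modified complexes, where $U$ becomes $v^{2-t}$. You simply reconstruct the content of that citation by hand (pentagon-type domain counts, pairing of composites, degree bookkeeping) before performing the same lift.
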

\begin{proof}
Combine \cite[Proposition 6.1.1]{grid-book} and Proposition \ref{ct=ct}.
\end{proof}
\begin{proof}[Proof of Proposition \ref{prop:1}]
We can show this in the same argument as \cite[Theorem 5.10]{Foldvari-grid-upsilon} by using maps of Proposition \ref{cc}.
\end{proof}

\subsection{Adding an unknot}
\label{adding-unknot}
\begin{prop}
\label{prop:birth}
If $g,g'$ be two graph grid diagrams as in Figure \ref{fig:birth}, then as graded $\mathcal{R}$-modules,
\begin{equation}
\label{t-birth}
tHF^{-H}(g')\cong tHF^{-H}(g)\oplus tHF^{-H}(g)\llbracket1\rrbracket,
\end{equation}
where $\llbracket1\rrbracket$ is t-grading shift by $1$.
\end{prop}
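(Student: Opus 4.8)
\emph{Proof plan.}
The plan is to run the stabilization$'$ analysis of Subsection~\ref{sub:sta} in the (simpler) birth configuration. Fix labels on $g'$ so that $O_1$ is the new $O^*$-marking and $X_1$ the new $X$-marking produced by the birth; by definition they occupy the same square. Let $c$ be the intersection point of the new horizontal and vertical circles, and split $\mathbf{S}(g')=\mathbf{I}(g')\sqcup\mathbf{N}(g')$ into the states containing $c$ and those not containing $c$, with the induced splitting of $CF^{-H}(g')$ and the bijection $e\colon\mathbf{I}(g')\to\mathbf{S}(g)$. As in the proof of Proposition~\ref{chainho3} one assembles destabilization, stabilization and homotopy maps $\mathcal{D},\mathcal{S},\mathcal{K}$ from domains of the types $iL,iR,oL,oR,K$ — these are precisely the maps already invoked in Section~7 — and the pairing-of-domains cancellations of Subsection~\ref{sub:sta} go through, since they refer to the $X$-markings only formally. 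The outcome is a filtered chain homotopy equivalence of $CF^{-H}(g')$ with a complex that, after identifying the $U_i$'s, splits as $CF^{-H}_U(g)\otimes_{\mathbb{F}}V$, where $V$ has zero differential and two generators with the $\mathbb{F}[U]$-structure coming from the left factor. One may alternatively extract this filtered statement from the treatment of split unknot components in extended grid diagrams in \cite{grid-book}, after checking compatibility with Vance's symmetrized Alexander filtration.

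The one point where a birth departs from an honest stabilization$'$ is the bidegree of the two generators of $V$. For a stabilization$'$ the second summand is shifted by $\llbracket 1,1\rrbracket$ in (Maslov, symmetrized Alexander) bidegree — this is what produces the factor $W_t\cong\mathbb{F}_0\oplus\mathbb{F}_{-1+t}$ in Theorem~\ref{main}. For a birth, every domain entering $\mathcal{D},\mathcal{S},\mathcal{K}$ has equal local multiplicity at $X_1$ and at $O_1$, so the new component contributes nothing to the symmetrized Alexander level ($\mathbb{X}-\sum_i m_iO_i$ is unchanged at that square, since $m_1=1$); hence the generators of $V$ sit in Maslov--Alexander bidegrees $(0,0)$ and $(-1,0)$, their Alexander degrees agreeing precisely because $X_1$ and $O_1$ share a square.

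It remains to pass to the $t$-modified setting. By the above, $CF^{-H}_U(g')$ is chain homotopy equivalent, as a Maslov graded $\mathbb{F}[U]$-complex still carrying the symmetrized Alexander grading on generators, to $CF^{-H}_U(g)\otimes_{\mathbb{F}}V$, i.e. to $CF^{-H}_U(g)\oplus CF^{-H}_U(g)\llbracket1,0\rrbracket$. Applying the construction $C\mapsto C^t$ together with Propositions~\ref{ct=ct} and~\ref{U=t}, and evaluating $\mathrm{gr}_t=M-tA^H$ on the two generators of $V$, we get the values $0$ and $-1$, so $V$ becomes the $t$-graded space $\mathbb{F}_0\oplus\mathbb{F}_{-1}$. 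Since $X\otimes(\mathbb{F}_0\oplus\mathbb{F}_{-1})\cong X\oplus X\llbracket1\rrbracket$ for every graded $\mathcal{R}$-module $X$, this yields $tHF^{-H}(g')\cong tHF^{-H}(g)\oplus tHF^{-H}(g)\llbracket1\rrbracket$, as claimed.

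I expect the main obstacle to be the first step: verifying that the destabilization-type maps and the domain cancellations of Subsection~\ref{sub:sta} genuinely survive in the birth configuration (equivalently, pinning down the split-unknot equivalence of \cite{grid-book} in the symmetrized filtered category). Everything after that is grading arithmetic, whose only subtlety — that the new summand is shifted by $\llbracket1\rrbracket$ rather than by the $t$-dependent amount $\llbracket1-t\rrbracket$ occurring for a stabilization$'$ — is forced by the single fact that the birth's $X$- and $O^*$-markings lie in the same square.
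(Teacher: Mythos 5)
Your proposal matches the paper's proof in all essentials: both reuse the stabilization$'$ maps $\mathcal{D},\mathcal{S},\mathcal{K}$ from Subsection~\ref{sub:sta} (following the extended-grid-diagram treatment of \cite[Section 8.4]{grid-book}), observe that the domain-pairing cancellations are insensitive to where the $X$-markings sit, compute that the bidegree shift on the second summand is $\llbracket1,0\rrbracket$ rather than $\llbracket1,1\rrbracket$ precisely because $X_1$ and $O_1$ occupy the same square (so $m_1=1$ and the new row contributes nothing to $\mathbb{X}-\sum m_iO_i$), and then pass through Propositions~\ref{U=t} and~\ref{ct=ct} to land in the $t$-modified setting with the $t$-independent shift $\llbracket1\rrbracket$. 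The only cosmetic difference is that you phrase the intermediate step as a splitting of $CF^{-H}_U(g')$ after identifying the $U_i$'s, whereas the paper states a chain homotopy equivalence at the $CF^{-H}$ level and then passes to $CF^{-H}_U$; the content is the same.
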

\begin{proof}
The basic idea is the same as in \cite[Section 8.4]{grid-book}, in other words, we can use the same maps as the maps for stabilization$'$ invariance.

We assume that $CF^{-H}(g)$ is a chain complex over $\mathbb{F}[U_2,\dots,U_n]$ and $CF^{-H}(g')$ is one over $\mathbb{F}[U_1,\dots,U_n]$.

We will construct a chain homotopy equivalence
\begin{equation}
\label{eq:addingunknot}
CF^{-H}(g')\simeq CF^{-H}(g)[U_1]\oplus CF^{-H}(g)[U_1]\llbracket1,0\rrbracket.
\end{equation}

As in Figure \ref{fig:birth}, we assume that $g'$ has $2\times2$ squares such that the top-right square contains both one $O$-marking and one $X$-marking and the bottom-left square has one $O$- or $O^*$-marking.
By definition, $g'$ is strictly not a graph grid diagram but we can consider about $CF^{-H}(g')$ in the same manner (See \cite[Lemma 8.4.2]{grid-book} for detail).
We denote by c the intersection point of the new horizontal and vertical circles in $g'$
Under these settings, we can use the same maps $\mathcal{D}$, $\mathcal{S}$, and $\mathcal{K}$ as in Definition \ref{dfn:d}, \ref{dfn:s}, and \ref{dfn:k} respectively for $CF^{-H}(g)$ and $CF^{-H}(g')$.
Direct computations show that the grading changes of these maps are different from the maps of stabilization$'$ invariance in Section \ref{sub:sta}$\colon$ $\mathcal{D}^{iR}$ is bigraded,  $\mathcal{D}^{iL}$ shifts Maslov grading by $1$, $\mathcal{D}^{oR}$ is bigraded, and $\mathcal{D}^{oL}$ shifts Maslov grading by $-1$.
Counting domains are independent of markings, so $\mathcal{D}$, $\mathcal{S}$ are chain homotopy equivalences.

Applying Proposition \ref{ct=ct} to the induced chain homotopy equivalence $CF^{-H}_U(g')\simeq CF^{-H}_U(g)\oplus CF^{-H}_U(g)\llbracket1,0\rrbracket$ from (\ref{eq:addingunknot}), we get (\ref{t-birth}).

\end{proof}

\begin{proof}[Proof of \ref{prop:2}]
It is immediately from (\ref{t-birth}) in Proposition \ref{prop:birth}.
Similar to the proof of Theorem \ref{main2}, the grading shift $\llbracket1\rrbracket$ does not affect the value of the $\Upsilon$.
\end{proof}

\subsection{Wedge sum of an unknot}
\begin{prop}
\label{prop:wedge}
If $g,g'$ be two graph grid diagrams as in Figure \ref{fig:wedge}, then as graded $\mathcal{R}$-modules,
\begin{equation}
\label{t-wedge}
tHF^{-H}(g')\cong tHF^{-H}(g)\otimes W_t.
\end{equation}
\end{prop}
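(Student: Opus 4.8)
The plan is to mimic the stabilization$'$ argument of Section \ref{sub:sta}, since attaching an unknotted, unlinked edge from a vertex $v$ to $v$ is realized on grid diagrams by a local picture very close to the one used there. First I would describe the relevant local modification on a grid diagram representing $f$: near the vertex $v$ (an $O^*$-marking) we insert a new row and column so that $g'$ has a $2\times 2$ block of squares in which one square carries the new $O^*$-marking for the added edge and the adjacent squares carry the markings needed so that the new edge is unknotted and split from $f$; as in Proposition \ref{prop:birth} the diagram $g'$ is not literally a graph grid diagram, but $CF^{-H}(g')$ makes sense in the same way (cf. \cite[Lemma 8.4.2]{grid-book}). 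Let $c$ be the intersection point of the new horizontal and vertical circles, and decompose $\mathbf{S}(g') = \mathbf{I}(g') \sqcup \mathbf{N}(g')$ according to whether $c$ is occupied, exactly as in Subsection \ref{sub:sta}.

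Next I would set up the chain homotopy equivalence at the level of grid chain complexes. The same maps $\mathcal{D}$, $\mathcal{S}$, $\mathcal{K}$ of Definitions \ref{dfn:d}, \ref{dfn:s}, \ref{dfn:k} apply here, because the domains counted in those maps are insensitive to the placement of the $X$-markings (this is the point already exploited in Proposition \ref{chainho3} and Proposition \ref{prop:birth}); the relations \eqref{ds1} and \eqref{ds2} continue to hold. What changes is the bookkeeping of gradings: a direct computation of the Maslov and (symmetrized) Alexander shifts of $\mathcal{D}^{iL}, \mathcal{D}^{iR}, \mathcal{S}^{oL}, \mathcal{S}^{oR}$ in this configuration shows that the resulting chain homotopy equivalence is
\[
CF^{-H}(g') \simeq \mathrm{Cone}\!\left(U_1 - U_2 \colon CF^{-H}(g)[U_1] \to CF^{-H}(g)[U_1]\right),
\]
with $U_1$ homogeneous of bidegree $(-2,-1)$ — exactly as in the stabilization$'$ case rather than the birth case. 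Identifying all the $U_i$ then turns $\mathrm{Cone}(U_1-U_2)$ into $CF^{-H}_U(g) \oplus CF^{-H}_U(g)\llbracket 1,1\rrbracket$.

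Finally I would apply Proposition \ref{ct=ct} together with Proposition \ref{U=t} to this induced chain homotopy equivalence of Maslov-graded $\mathbb{F}[U]$-complexes, obtaining
\[
tCF^{-H}(g') \simeq tCF^{-H}(g) \oplus tCF^{-H}(g)\llbracket 1-t\rrbracket,
\]
which is precisely $tCF^{-H}(g)\otimes W_t$; passing to homology gives \eqref{t-wedge}. The step I expect to be the main obstacle is the explicit grading computation for $\mathcal{D}$ and $\mathcal{S}$ in this local picture: one has to check carefully, using \eqref{def-Alexander-link} or \eqref{mm}–\eqref{aa} for the modified diagram, that the Alexander shift of the "$iL$"-type piece is $1$ (so that the cone is over $U_1-U_2$ with $U_1$ of bidegree $(-2,-1)$) and not $0$ as in the birth case of Proposition \ref{prop:birth}; this is what distinguishes the wedge move from the disjoint-union move and produces the factor $W_t$ rather than $\mathbb{F} \oplus \mathbb{F}\llbracket 1\rrbracket$ collapsing to a trivial shift. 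Everything else is a transcription of the arguments already in Sections 5 and 6.
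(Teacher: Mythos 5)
Your proposal is correct and follows essentially the same route as the paper: reuse the stabilization$'$ maps $\mathcal{D}$, $\mathcal{S}$, $\mathcal{K}$ (noting the domains ignore the extra $O^*$-marking in the $2\times 2$ block), obtain $CF^{-H}(g')\simeq\mathrm{Cone}(U_1-U_2)$ with $U_1$ of bidegree $(-2,-1)$, collapse to $CF^{-H}_U(g)\oplus CF^{-H}_U(g)\llbracket1,1\rrbracket$, and apply Propositions \ref{ct=ct} and \ref{U=t}. The only small note is that the grading check should use \eqref{mm}--\eqref{aa} rather than \eqref{def-Alexander-link}, since the latter assumes a tight grid diagram for a link and here $f$ is a general balanced spatial graph.
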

\begin{proof}
The same argument as Proposition \ref{chainho3} works even if there is one extra $O^*$-marking in the $2\times 2$ block because the domains that appear in $\mathcal{D}$, $\mathcal{S}$, and $\mathcal{K}$ is independent of the extra $O^*$-marking.
We can show that there is a chain homotopy equivalence
\[
CF^{-H}(g')\simeq \mathrm{Cone}(U_1-U_2\colon CF^{-H}(g)[U_1]\to CF^{-H}(g)[U_1]),
\]
where we regard $CF^{-H}(g')$ as $\mathbb{F}[U_1,\dots,U_n]$-module and $CF^{-H}(g)$ as $\mathbb{F}[U_2,\dots,U_n]$-module.
\begin{figure}
\centering
\includegraphics[scale=0.7]{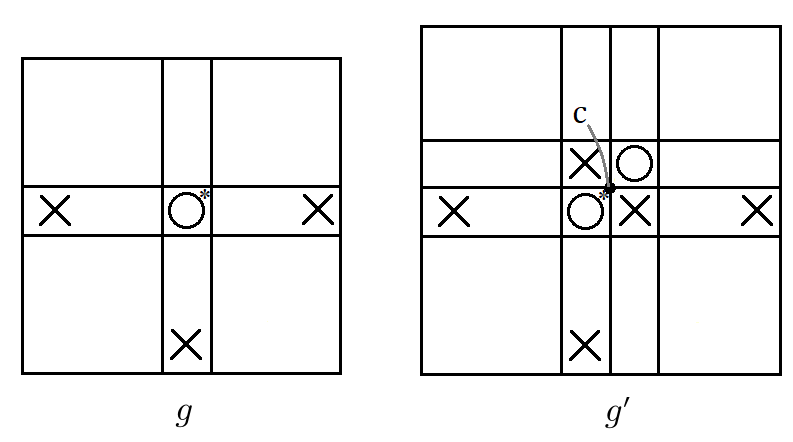}
\caption{grid diagram representing $f$, $f\#_c\mathcal{O}$}
\label{fig:wedge}
\end{figure}
Applying Proposition \ref{ct=ct} to the induced chain homotopy equivalence $CF^{-H}_U(g')\simeq CF^{-H}_U(g)\oplus CF^{-H}_U(g)\llbracket1,1\rrbracket$ by the relation $U_1=\dots=U_n$, we get (\ref{t-wedge}).
\end{proof}

\begin{proof}[Proof of \ref{prop:3}]
It is immediately from Proposition \ref{prop:wedge}.
Similar to the proof of Theorem \ref{main2} and \ref{prop:2}, the grading shift $\llbracket1-t\rrbracket$ from $W_t$ does not affect the value of the $\Upsilon$ for $t\in[0,1]$ because $1-t\geq0$.
\end{proof}

\section*{Acknowledgements}
I would like to express my sincere gratitude to my supervisor, Tetsuya Ito, for useful discussions and corrections.
I sincerely thank the anonymous reviewers for their valuable feedback.
This work was supported by JST, the establishment of university fellowships towards
the creation of science technology innovation, Grant Number JPMJFS2123.

\bibliography{grid}
\bibliographystyle{amsplain} 

\end{document}